\documentclass[12pt,reqno]{amsart}



\setlength{\columnseprule}{0.4pt}
\setlength{\topmargin}{.5cm}
\setlength{\oddsidemargin}{1.5cm}
\setlength{\evensidemargin}{1.5cm}
\setlength{\textheight}{22.0cm}
\setlength{\textwidth}{13.0cm}

\usepackage{amsfonts,amsmath,amsthm}
\usepackage{amssymb,epsfig}
\usepackage{cases}
\usepackage[usenames]{color}
\usepackage{enumerate} 

\usepackage{color} 
\definecolor{vert}{rgb}{0,0.6,0}

\theoremstyle{plain}
\newtheorem{thm}{Theorem}[section]

\newtheorem{defn}{Definition}

\newtheorem{lem}[thm]{Lemma}

\newtheorem{prop}[thm]{Proposition}
\theoremstyle{remark}
\newtheorem{rem}{\bf{Remark}}




\newcommand{\N}{\mathbb{N}}
\newcommand{\R}{\mathbb{R}}

\newcommand{\cA}{\mathcal{A}}

\newcommand{\cM}{\mathcal{M}}

\newcommand{\BUC}{{\rm BUC\,}}
\newcommand{\USC}{{\rm USC\,}}
\newcommand{\LSC}{{\rm LSC\,}}
\newcommand{\UC}{{\rm UC\,}}
\newcommand{\Li}{L^{\infty}}
\newcommand{\W}{W^{1,\infty}}

\newcommand{\bO}{\partial\Omega}
\newcommand{\cO}{\overline\Omega}
\newcommand{\Q}{\Omega\times(0,\infty)}

\newcommand{\bQ}{\partial\Omega\times(0,\infty)}

\newcommand{\cQ}{\overline\Omega\times[0,\infty)}

\newcommand{\QT}{Q_T}


\newcommand{\al}{\alpha}
\newcommand{\gam}{\gamma}
\newcommand{\del}{\delta}
\newcommand{\ep}{\varepsilon}

\newcommand{\sig}{\sigma}
\newcommand{\om}{\omega}

\newcommand{\Om}{\Omega}

\newcommand{\ol}{\overline}
\newcommand{\ul}{\underline}
\newcommand{\pl}{\partial}

\newcommand{\dist}{{\rm dist}\,}

\def\ue{u^\varepsilon}
\def\u1e{u_{1,\ep}}
\def\limssup{\mathop{\rm limsup\!^*}}
\def\limiinf{\mathop{\rm liminf_*}}


\begin{document}
\title[Large-Time Asymptotics For Boundary-Value Problems]
{A PDE approach to large-time asymptotics
for boundary-value problems for
nonconvex Hamilton-Jacobi Equations
}

\author[G. BARLES and H. MITAKE]
{Guy BARLES and Hiroyoshi MITAKE}

\address[G. Barles]
{Laboratoire de Math\'ematiques et Physique 
Th\'eorique (UMR CNRS 6083), F\'ed\'eration Denis Poisson, 
Universit\'e de Tours, 
Place de Grandmont, 
37200 Tours, FRANCE}
\email{barles@lmpt.univ-tours.fr}
\urladdr{http://www.lmpt.univ-tours.fr/~barles}

\address[H. Mitake]
{Department of Applied Mathematics,
Graduate School of Engineering
Hiroshima University
Higashi-Hiroshima 739-8527, Japan}
\email{mitake@amath.hiroshima-u.ac.jp}

\keywords{Large-time Behavior; Hamilton-Jacobi Equations; 
Initial-Boundary Value Problem; Ergodic Problem; Nonconvex Hamiltonian}
\subjclass[2010]{
35B40, 
35F25, 
35F30, 
}

\thanks{This work was partially supported by the ANR project ``Hamilton-Jacobi et th\'eorie KAM faible'' (ANR-07-BLAN-3-187245) and by the Research Fellowship (22-1725)
for Young Researcher from JSPS}

\date{\today}

\begin{abstract}
We investigate the large-time behavior of three types of initial-boundary value problems for 
Hamilton-Jacobi Equations with nonconvex Hamiltonians. We consider the 
Neumann or oblique boundary condition, 
the state constraint boundary condition and Dirichlet boundary condition. We establish general convergence results for viscosity solutions to asymptotic solutions as time goes to infinity via an approach based on PDE techniques. These results are obtained not only under general conditions on the Hamiltonians but also under weak conditions on the domain and the oblique direction of reflection in the Neumann case.
\end{abstract}

\maketitle


\section{Introduction and Main Results}

In this paper we investigate the large time behavior of viscosity solutions of the initial-boundary value problems for Hamilton-Jacobi Equations which we write under the form
\begin{numcases}
{\textrm{(IB)} \hspace{1cm}}
u_t+H(x,Du)= 0 & in $\Q$,\label{ib-1} \\
B(x,u,Du)=0 & on $\bQ$,\label{ib-2}\\
u(x,0)=u_{0}(x) & on $\cO$,\label{ib-3}
\end{numcases}
where 
$\Om$ is a bounded domain of $\R^{N}$, 
$H=H(x,p)$ is a given real-valued continuous function on $\cQ$, 
which is {\em coercive}, i.e., 
\begin{itemize}
\item[{\rm(A1)}]\hspace{1cm} $\displaystyle \lim_{r\to+\infty}\inf\{H(x,p)\mid x\in\cO, 
|p|\ge r \}=+\infty. 
$
\end{itemize} 
Here $B$ and $u_{0}$ are given real-valued continuous functions 
on $\bO\times\R\times\R^{N}$ and 
$\cO$, respectively. 
The solution $u$ is a real-valued function on $\cQ$ and 
we respectively denote by $u_t:=\pl u/\pl t$ and 
$Du:=(\pl u/\pl x_1,\ldots,\pl u/\pl x_N)$ its time derivative and gradient with respect to the space variable. 
We are dealing only with viscosity solutions of Hamilton-Jacobi equations in this paper and thus the term ``viscosity" may be omitted henceforth. We also point out that the boundary conditions have to be understood in the viscosity sense : we refer the reader to the beginning of Appendix where this definition is recalled.

We consider three types of boundary conditions (BC in short)
\begin{equation}\label{neumann}
\hbox{(Neumann or oblique BC)}\quad B(x,r,p)=\gam(x)\cdot p -g(x), 
\end{equation}
where $g$ is a given real-valued continuous function on $\bO$ and 
$\gam:\cO\to\R^{N}$ is a continuous vector field which 
is oblique to $\bO$, i.e., 
\begin{equation}\label{oblique}
n(x)\cdot\gam(x)>0 
\quad\textrm{for any} \ x\in\bO, 
\end{equation}
where 
$n(x)$ is the outer unit normal vector at $x$ to 
$\bO$,
\begin{equation}\label{sc}
\hbox{(state constraint BC) }\quad
B(x,r,p)= -1 \ 
\textrm{for all} \ (x,r,p)\in\cO\times\R\times\R^{N},
\end{equation}
and finally
\begin{equation}\label{dirichlet}
\hbox{(Dirichlet BC) }\quad
B(x,r,p)=r-g(x).  
\end{equation}
We respectively denote Problem (IB) with boundary conditions \eqref{neumann}, \eqref{sc} 
and \eqref{dirichlet} by (CN), (SC) and (CD).

We impose different regularity assumptions on $\bO$ depending on the boundary condition we are treating. 
When we consider Neumann/oblique derivative problems, 
we \textit{always} assume that $\Om$ is a domain with a $C^{1}$-boundary. 
When we consider state constraint or Dirichlet problems,
we \textit{always} assume that $\Om$ is a domain with 
a $C^{0}$-boundary. 
Moreover when we consider Dirichlet problems, we assume 
a compatibility condition on the initial value $u_{0}$ 
and the boundary value $g$, i.e.,  
\begin{equation}\label{comp-cond}
u_{0}(x)\le g(x) \ \textrm{for all} \ x\in\bO. 
\end{equation}
Since we are going to assume $\gamma$ in \eqref{neumann} to 
be only continuous (and not Lipschitz continuous 
as it is classically the case), 
we have to solve Problem (IB) with very weak conditions. 
This is possible as a consequence of the coercivity assumption on $H$ 
since the solutions are expected to be in $W^{1,\infty}$, which denotes 
the set of bounded functions whose the first distributional derivatives 
are essentially bounded. 
We postpone the proof of the existence, uniqueness and regularity of the solution $u$ of (IB) to Appendix since the proofs are a little bit technical and the main topic of this paper concerns the asymptotic behavior as $t\to +\infty$ of the solutions. The reader may first assume that $\bO$ and $\gamma$ are regular enough to concentrate on this asymptotic behavior and then consider Appendix for the generalization to less regular situations.

More precisely we have the following existence 
result for (IB).
\begin{thm}[Existence of Solutions of {\rm (IB)}]\label{thm:existence}
Assume that {\rm (A1)} holds and that $u_{0}\in\W(\Om)$. \\
{\rm (i)} Assume that $\Om$ is a bounded domain with a $C^1$-boundary and 
that $\gamma$, $g$ are continuous functions which satisfy \eqref{oblique}. 
There exists a unique solution $u$ of {\rm (CN)} 
which is continuous on $\cQ$ and in $W^{1,\infty}(\Om \times [0,\infty))$.\\
{\rm (ii)} Assume that $\Om$ is a bounded domain with a $C^0$-boundary.  
There exists a unique solution $u$ of {\rm (SC)} 
which is continuous on $\cQ$ and in $W^{1,\infty}(\Om \times [0,\infty))$.\\
{\rm (iii)} 
Assume that $\Om$ is a bounded domain with a $C^0$-boundary 
and that $g\in C(\bO)$ and $u_0$ which satisfy 
\eqref{comp-cond}.
There exists a unique solution $u$ of {\rm (CD)} 
which is continuous on $\cQ$ and in $W^{1,\infty}(\Om \times [0,\infty))$. 
\end{thm}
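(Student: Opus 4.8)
The plan is to establish, in order, a comparison principle, then existence by Perron's method, and finally the $W^{1,\infty}$ regularity through the coercivity of $H$; the three boundary conditions \eqref{neumann}, \eqref{sc} and \eqref{dirichlet} are treated by one and the same scheme, only the boundary-layer estimates differing. I would first prove that, for each of (CN), (SC) and (CD), a bounded USC subsolution $u$ and a bounded LSC supersolution $v$ of (IB) (boundary conditions understood in the viscosity sense) satisfy $u\le v$ on $\cQ$; uniqueness is then immediate. One begins with $\Li$ bounds, obtained by comparison with $\pm(\|u_0\|_{\Li}+C_0t)$. The essential use of (A1) is that any subsolution of the coercive equation \eqref{ib-1} is automatically locally --- hence, $\Om$ being bounded, globally --- Lipschitz continuous in $x$, with a constant depending only on its sup-norm and on $H$. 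Granting this, the standard doubling-of-variables argument applies: for (SC) it is the classical state-constraint comparison of Soner and of Capuzzo-Dolcetta and Lions; for (CD) it is Ishii's argument, the compatibility condition \eqref{comp-cond} ensuring that the sub/supersolutions take the boundary data continuously near $t=0$. For (CN), where $\gam$ is only continuous, the new point is that compactness of $\bO$ turns \eqref{oblique} into $n\cdot\gam\ge\theta_0>0$, and since $\bO$ is $C^1$ one can build $\psi\in C^1(\cO)$ with $\gam\cdot D\psi\le-1$ in a neighbourhood of $\bO$; adding a small multiple of $\psi(x)$ to the test function in the doubling argument absorbs the oblique boundary terms, the Lipschitz bound from (A1) controlling the remainder. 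Equivalently, one may approximate $\gam$ uniformly by smooth vector fields $\gam_\ep$ with $n\cdot\gam_\ep\ge\theta_0/2$, apply the classical Neumann theory, and pass to the limit using the uniform estimates below.

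\emph{Existence.} Next I would run Perron's method, for which it suffices to produce an ordered pair consisting of a subsolution $\ul u$ and a supersolution $\ol u$ of (IB) with $\ul u(\cdot,0)=\ol u(\cdot,0)=u_0$. Since $u_0\in\W(\Om)$ and (A1) holds, $H(\cdot,Du_0)\in\Li(\Om)$, so $u_0(x)\pm C_0t$ are respectively a super- and a subsolution of \eqref{ib-1}; one then corrects them near $\bO$ so as to satisfy the boundary condition --- adding $\mp K\psi(x)$ in case (CN) with $\psi$ as above, invoking \eqref{comp-cond} in case (CD) so that $u_0(x)-C_0t\le g(x)$ on $\bQ$, and observing in case (SC) that the subsolution condition on $\bQ$ is vacuous (since $B\equiv-1<0$) while $u_0+C_0t$ is a supersolution up to $\bO$. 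Perron's method then yields a solution $u$ with $\ul u\le u\le\ol u$; evaluating at $t=0$ and using the comparison principle gives $u\in C(\cQ)$ with $u(\cdot,0)=u_0$, together with the quantitative estimate
\[
|u(x,t)-u_0(x)|\le C_0t\qquad\text{on }\cQ.
\]

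\emph{Regularity.} Fix $h>0$. Then $(x,t)\mapsto u(x,t+h)$ solves (IB) with the same boundary condition and with initial trace $u(\cdot,h)$, which differs from $u_0$ by at most $C_0h$ in sup-norm by the previous step; the comparison principle therefore gives $\|u(\cdot,\cdot+h)-u\|_{\Li}\le C_0h$, i.e.\ $u$ is uniformly Lipschitz in $t$ on $\cQ$. Feeding this back into \eqref{ib-1} yields $H(x,Du)\le C_0$ a.e.\ (and in the viscosity sense), whence (A1) gives $|Du|\le R_0$ for some constant $R_0$. Thus $u$ is globally Lipschitz on $\Om\times[0,\infty)$, that is $u\in W^{1,\infty}(\Om\times[0,\infty))$, which completes the proof.

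\emph{Main obstacle.} The one genuinely delicate point is the comparison principle for (CN) under the weak hypothesis that $\gam$ is merely continuous rather than Lipschitz: this is precisely where (A1) is indispensable, since it is the Lipschitz regularity of sub/supersolutions that makes the oblique boundary terms in the doubling argument --- or, equivalently, the passage to the limit in the smooth approximation of $\gam$ --- tractable. The remaining ingredients are routine viscosity-solution technology.
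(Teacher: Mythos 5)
Your proposal follows essentially the same route as the paper: barriers $u_0(x)\pm Ct$, Perron's method combined with a comparison principle whose boundary analysis is made tractable by the Lipschitz regularity forced by (A1), and then the time-translation argument $\|u(\cdot,\cdot+h)-u\|_\infty\le Ch$ followed by coercivity to bound $Du$. One caveat: a subsolution of \eqref{ib-1} is \emph{not} automatically Lipschitz in $x$ with a constant depending only on its sup-norm (the viscosity inequality only gives $H(x,D\phi)\le-\phi_t$, and $\phi_t$ is uncontrolled), so in the comparison proof you must first regularize in time --- the paper does this via the sup-convolution $u^{\del}(x,t)=\max_s\{u(x,s)-(t-s)^2/\del\}$, which yields $|u^{\del}_t|\le C_\del$ and only then, via (A1), the spatial Lipschitz bound \eqref{pf:comp:lip}.
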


Throughout this work, we are going to use {\it anytime} the same assumptions as in Theorem~\ref{thm:existence} which clearly depend on the kind of problem we are considering. In order to simplify the statements of our results, we will say from now on the ``basic assumptions'' ((BA) in short) are satisfied if  {\rm (A1)} holds and if $\Om$ is a bounded domain with a $C^1$-boundary in the case of Neumann/oblique derivatives boundary conditions or $\Om$ is a bounded domain with a $C^0$-boundary in the case of state constraint or Dirichlet boundary conditions. When it is relevant, $u_{0}\in\W(\Om)$ is also assumed to be part of (BA) (results for time-dependent problems) and so are the continuity assumptions for $\gamma, g$ on $\bO$ as well as \eqref{oblique} and \eqref{comp-cond}.

Next we recall that the standard asymptotic behavior for solutions of Hamilton-Jacobi Equations is given by an \textit{additive eigenvalue} 
or \textit{ergodic problem} : for $t$ large enough, the solution $u(x,t)$ is expected to look like $-c t + v(x)$ where the constant $c$ and the function $v$ are solutions of
$$ H(x,Dv(x))= c \quad \hbox{in  }\Om\, ,$$
with suitable boundary conditions. A typical result, which was first proved in $\R^{N}$ for the periodic case by  
P.-L. Lions, G. Papanicolaou and S. R. S. Varadhan \cite{LPV}, is that there exists a unique constant $c$ for which this problem has a solution, 
while the solution $v$ may be non unique, even up to an additive constant. 
This non-uniqueness feature is 
a key difficulty in the study of the asymptotic behavior.

In the Neumann/oblique derivative or state constraint case, 
the additive eigenvalue or ergodic problem reads
\begin{numcases}
{{\rm (E)} \hspace{1cm}}
H(x,Dv(x))= a & in  $\Om$,\label{erg-1}\\
B(x,v(x),Dv(x))=0 & on $\bO$. \label{erg-2}
\end{numcases}
Here one seeks for a
pair $(v,a)$ of $v\in C(\cO)$ and $a\in\R$ 
such that $v$ is a solution of (E). 
If $(v,a)$ is such a pair, we call $v$ an \textit{additive eigenfunction} or 
\textit{ergodic function} 
and $a$ an \textit{additive eigenvalue} 
or \textit{ergodic constant}. 
We denote Problem (E) with \eqref{neumann}, \eqref{sc} 
by (E-N) and (E-SC). 

For the large-time asymptotics of solutions of (CD), as we see it later, there are different cases which are treated either by using (E-SC) 
or the stationary Dirichlet problems
\begin{numcases}
{{\rm (D)}_{a} \hspace{1cm}}
H(x,Dv(x))= a & in  $\Om$,\label{d-1}\\
v(x)=g(x) & on $\bO$ \label{d-2}
\end{numcases}
for $a\in\R$.
Since the existence of solutions of 
(E-N), (E-SC) and (D)$_{a}$ are playing a central role, we first state the following result.

\begin{thm}[Existence of Solutions of Additive Eigenvalue Problems]\label{thm:additive}
Assume that {\rm (BA)} holds. \\
{\rm (i)} There exists a solution 
$(v,c_{n})\in W^{1,\infty}(\Om)\times\R$ of {\rm (E-N)}. 
Moreover, the additive eigenvalue is unique and is represented by 
\begin{equation}\label{additive-const-neumann}
c_{n}=\inf\{a\in\R\mid 
{\rm (E}\textrm{-}{\rm N)} 
\ \textrm{has a subsolution}\}. 
\end{equation}
{\rm (ii)} 
There exists a solution $(v,c_{sc})\in \W(\Om)\times\R$ of {\rm (E-SC)}. 
An additive eigenvalue is unique and is represented by 
\begin{equation}\label{additive-const-state}
c_{sc}=\inf\{a\in\R\mid \eqref{erg-1} \ \textrm{has a subsolution}\}. 
\end{equation}
{\rm (iii)}
There exists a solution 
$v\in \W(\Om)$ of {\rm (D)$_{a}$}
if and only if $a\ge c_{sc}$.  
\end{thm}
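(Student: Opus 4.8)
The plan is to prove the three parts by the now-standard viscosity-solution machinery for ergodic problems, adapted to the coercive but possibly nonconvex setting and to the weak regularity of $\bO$ and $\gamma$. For part~(i), I would first introduce the approximate (discounted) problems: for $\lam>0$, solve
\begin{equation}\label{pp:disc}
\lam v_\lam + H(x,Dv_\lam) = 0 \ \hbox{ in }\Om, \qquad \gam(x)\cdot Dv_\lam - g(x) = 0 \ \hbox{ on }\bO,
\end{equation}
whose well-posedness (existence, uniqueness, and a uniform $W^{1,\infty}$ bound) follows from Theorem~\ref{thm:existence}-type arguments together with the coercivity (A1) — indeed (A1) forces any subsolution to be Lipschitz with a constant depending only on $\|\lam v_\lam\|_\infty$, and a comparison argument bounds $\|\lam v_\lam\|_\infty$ independently of $\lam$. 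Then I would show $|\lam v_\lam + \lam v_\mu|$ is small uniformly, so that $-\lam v_\lam$ converges along a subsequence to a constant $c_n$, while $v_\lam - v_\lam(x_0)$ converges uniformly (Ascoli) to some $v$; passing to the limit in the viscosity sense gives a solution $(v,c_n)$ of (E-N). The representation formula \eqref{additive-const-neumann} is then obtained in two steps: any subsolution of (E-N) with constant $a$ gives, by a comparison principle against $v$, that $a\ge c_n$ (so $c_n \le \inf\{\dots\}$); and $v$ itself is a subsolution with $a=c_n$, giving the reverse inequality. Uniqueness of the eigenvalue is immediate from the representation.

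For part~(ii) the argument is the same with the state-constraint boundary condition \eqref{sc}: the discounted state-constraint problem is well posed, the uniform estimates and the half-relaxed limits procedure go through verbatim, and one gets $(v,c_{sc})$ solving (E-SC) together with \eqref{additive-const-state}. The only point requiring care is that for state constraints the subsolution inequality is required only in $\Om$ (which is why \eqref{additive-const-state} refers to \eqref{erg-1} alone), while the supersolution inequality must hold up to $\bO$; one must check the comparison principle in the form ``state-constraint supersolution vs.\ interior subsolution'' still holds for merely $C^0$ domains, which is classical under coercivity since subsolutions extend continuously up to $\cO$.

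For part~(iii), fix $a\in\R$ and consider $(\mathrm{D})_a$. If $a\ge c_{sc}$, take the ergodic pair $(w,c_{sc})$ from part~(ii); since $H(x,Dw)=c_{sc}\le a$, $w$ is a subsolution of the PDE in \eqref{d-1}, and the compatibility-type requirement is handled by subtracting a large constant and using Perron's method between this subsolution and the obvious supersolution built from the boundary data $g$ together with coercivity (the maximal subsolution below $g$ on $\bO$). The usual Perron/viscosity-solutions argument then produces a solution of $(\mathrm{D})_a$ in $\W(\Om)$. Conversely, if $(\mathrm{D})_a$ has a solution $v$, then $v$ is in particular a subsolution of \eqref{erg-1}, so by \eqref{additive-const-state} we get $a\ge c_{sc}$. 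The \emph{main obstacle} I anticipate is not the ergodic-limit scheme itself but the technical underpinnings under the weak hypotheses: establishing the uniform Lipschitz bounds and, above all, the comparison principles for merely $C^0$ (state constraint / Dirichlet) and merely $C^1$ (Neumann with only continuous $\gam$) boundaries — the latter being delicate because the standard doubling-of-variables argument for oblique problems uses Lipschitz regularity of $\gam$ to control the penalization term. Presumably this is exactly what is deferred to the Appendix, and I would invoke those comparison results as black boxes here.
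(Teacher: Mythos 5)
Your proposal follows essentially the same route as the paper: the vanishing-discount approximation $\lam v_\lam + H(x,Dv_\lam)=0$ with uniform bounds from coercivity plus Ascoli--Arzel\`a for parts (i)--(ii), the eigenvalue representation via comparison (which the paper implements by comparing $v_a-at$ against $v-c_n t$ for the time-dependent problem, since no comparison holds for the ergodic problem itself), and Perron's method between a shifted ergodic subsolution and an affine, coercivity-based supersolution dominating $g$ for part (iii). The argument is correct and matches the paper's proof.
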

Existence results of solutions of (E) have been established 
by P.-L. Lions in \cite{L2} (see also \cite{I2}) 
for the Neumann/oblique derivative boundary condition and 
by I.~Capuzzo-Dolcetta and P.-L.~Lions in \cite{CL} 
(see also \cite{M1}) 
for the state constraint boundary condition. 
Additive eigenvalue problems (E) and (D)$_{a}$ give 
the ``stationary states'' for solutions of (IB) 
as our main result shows.  
A simple observation related to this is that, for any 
$(v,c)\in C(\cO)$, 
the function $v(x)-ct$ is a solution of (IB) 
if and only if $(v,c)$ is a solution of 
(E) or (D)$_{c}$. 

As we suggested above, our main purpose is to prove that the solution $u$ of (IB) has, for the Neumann/oblique derivative or the state constraint boundary condition, the following behavior
\begin{equation}\label{conv}
u(x,t)-(v(x)-ct)\to0 \quad\textrm{uniformly on} 
\ \cO 
\ \textrm{as} \ t\to\infty, 
\end{equation}
where $(v,c)\in \W(\Om)\times\R$ is a solution 
of (E-N) or (E-SC) while, for the Dirichlet boundary condition case, this behavior depends on the sign of $c_{sc}$. 
We call such a function $v(x)-ct$ 
an \textit{asymptotic solution} of (IB). 
It is worth mentioning that 
though we can relatively easily get the convergence 
\[
\frac{u(x,t)}{t}\to-c \ \textrm{uniformly on} \ 
\cO \ \textrm{as} \ t\to\infty
\]
as a result of Theorem \ref{thm:additive} and the comparison 
principle for (IB) (see Theorem \ref{thm:comparison}), 
it is generally rather difficult to show 
\eqref{conv}, since (as we already mention it above) additive eigenfunctions 
of (E) or solutions of (D)$_{c}$ may not be unique. 

Before stating our main result on the asymptotic behavior of solution of (E), we recall that, in the last decade, the large time behavior of solutions of Hamilton-Jacobi equation in compact manifold (or in $\R^N$, mainly in the periodic case) has received much attention and general convergence results for solutions have been established. 
G. Namah and J.-M. Roquejoffre in \cite{NR} are the first to 
prove \eqref{conv} 
under the following additional assumption 
\begin{equation}\label{assumption-NR}
H(x,p)\ge H(x,0) 
\ \textrm{for all} \ (x,p)\in\cM\times\R^{N} 
\ \textrm{and} \ 
\max_{\cM}H(x,0)=0,   
\end{equation}
where $\cM$ is a smooth compact $N$-dimensional 
manifold without boundary. 
Then A. Fathi in \cite{F2} proved 
the same type of convergence result 
by dynamical systems type arguments introducing the ``weak KAM 
theory''. Contrarily to \cite{NR}, the results of \cite{F2} use 
strict convexity (and smoothness) assumptions on $H(x,\cdot)$, 
i.e., $D_{pp}H(x,p)\ge\al I$ for all $(x,p)\in\cM\times\R^{N}$ 
and $\al>0$ (and also far more regularity) but do not require
\eqref{assumption-NR}. 
Afterwards J.-M. Roquejoffre \cite{R} and 
A. Davini and A. Siconolfi in \cite{DS} refined 
the approach of A. Fathi and 
they studied the asymptotic problem for 
\eqref{ib-1} on $\cM$ or $N$-dimensional torus. 
We also refer to the articles 
\cite{BR, I, II1, II2, II3} 
for the asymptotic problems in the whole domain $\R^N$ 
without the periodic assumptions in various situations.  
The first author and P. E. Souganidis obtained in \cite{BS} more general results, for possibly non-convex Hamiltonians, by using an approach based on partial differential equations methods and viscosity solutions, which was not using in a crucial way the explicit formulas of representation of
the solutions. Later, the first author and J.-M. Roquejoffre provided results for unbounded solutions, for convex and non-convex equations, using partially the ideas of \cite{BS} but also of \cite{NR}. In this paper, we follow the approach of \cite{BS}.

There also exists results on the asymptotic behavior of solutions of convex Hamilton-Jacobi Equation with boundary conditions. 
The second author \cite{M1} studied
the case of the state constraint boundary condition and then the Dirichlet boundary conditions \cite{M2, M3}. Roquejoffre in \cite{R} was also dealing with solutions of the Cauchy-Dirichlet problem  which satisfy the Dirichlet boundary condition pointwise (in the classical sense)~: this is a key difference with the results of \cite{M2, M3} where the solutions were satisfying the Dirichlet boundary condition in a generalized (viscosity solutions) sense. For Cauchy-Neumann problems, H. Ishii in \cite{I3} establishes very recently the asymptotic behavior. 
We point out that all these works use a generalized dynamical approach similar to \cite{F2, DS}, 
which is very different from ours. Our results are more general since they also hold also for non-convex Hamiltonians. It is worthwhile to mention that our results on 
\eqref{conv} include those in \cite{M1, M2, I3}.

We also refer to the articles \cite{R, BeR} for the large time behavior of 
solutions to time-dependent Hamilton-Jacobi equations and 
\cite{FU, FL} for the convergence rate in \eqref{conv}. 
Recently the second author with Y. Giga and Q. Liu in \cite{GLM1, GLM2} 
has gotten the large time behavior of solutions of 
Hamilton-Jacobi equations with noncoercive Hamiltonian 
which is motivated by a model describing growing faceted crystals. 

To state our main result, we need the following 
assumptions on the Hamiltonian. 
We denote by $H_{a}(x,p)$ the Hamiltonian $H(x,p)-a$ 
for $a\in\R$. 
\begin{itemize}
\item[{\rm(A2)}$_{a}^{+}$] 
There exists $\eta_{0}>0$ such that, 
for any $\eta\in(0,\eta_{0}]$, 
there exists $\psi_{\eta}>0$ such that 
if $H_{a}(x,p+q)\ge\eta$ and $H_{a}(x,q)\le0$ for some 
$x\in\cO$ and $p,q\in\R^{N}$, then for any $\mu\in(0,1]$, 
\[
\mu H_{a}(x,\frac{p}{\mu}+q)\ge 
H_{a}(x,p+q)+\psi_{\eta}(1-\mu). 
\]

\item[{\rm(A2)}$_{a}^{-}$] 
There exists $\eta_{0}>0$ such that, 
for any $\eta\in(0,\eta_{0}]$,
there exists $\psi_{\eta}>0$ such that 
if $H_{a}(x,p+q)\le-\eta$ and $H_{a}(x,q)\ge0$ for some 
$x\in\cO$ and $p,q\in\R^{N}$, then for any $\mu\ge1$, 
\[
\mu H_{a}(x,\frac{p}{\mu}+q)\le 
H_{a}(x,p+q)-\frac{\psi_{\eta}(\mu-1)}{\mu}. 
\]
\end{itemize}

Our main result is the following theorem.
\begin{thm}[Large-Time Asymptotics]\label{thm:large-time}
Assume that {\rm (BA)} holds.\\
{\rm (i)} {\bf (Neumann/oblique derivative problem)} 
If {\rm(A2)}$_{c_{n}}^{+}$ 
or {\rm(A2)}$_{c_{n}}^{-}$ holds, then there exists a solution 
$v \in W^{1,\infty}(\Om)$ of {\rm (E-N)} with $c=c_{n}$ such that 
\eqref{conv} holds with $c=c_{n}$.\\ 
{\rm (ii)} {\bf (State constraint problem)} 
If {\rm(A2)}$_{c_{sc}}^{+}$ or {\rm(A2)}$_{c_{sc}}^{-}$ holds, then there exists a solution $v\in W^{1,\infty}(\Om)$ of {\rm (E-SC)} with $c=c_{sc}$ such that  \eqref{conv} holds with $c=c_{sc}$. \\
{\rm (iii)} {\bf (Dirichlet problem)} 
Assume that {\rm(A2)}$_{c_{sc}}^{+}$ or {\rm(A2)}$_{c_{sc}}^{-}$ holds. \\ 
{\rm (iii-a)} 
If $c_{sc}>0$, 
then there exists a solution $v\in W^{1,\infty}(\Om)$ of {\rm (E-SC)} with $c=c_{sc}$ such that  \eqref{conv} holds with $c=c_{sc}$. \\
{\rm (iii-b)} 
If $c_{sc} \le 0$, then there exists a solution $v\in W^{1,\infty}(\Om)$ of {\rm(D)$_{0}$} 
such that \eqref{conv} holds with $c=0$.
\end{thm}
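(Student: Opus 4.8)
The plan is to follow the purely PDE‑based method of \cite{BS}, treating the three boundary conditions in parallel and flagging the case‑specific points. I would first \emph{normalize the ergodic constant to zero}: in cases (i), (ii) and (iii‑a) replace $u(x,t)$ by $u(x,t)+ct$ and $H$ by $H_c:=H-c$ (with $c=c_n$ or $c=c_{sc}$), and in case (iii‑b) take $c=0$. Then $H_c$ still satisfies (A1), the hypothesis (A2)$^{\pm}_{c}$ for $H$ becomes (A2)$^{\pm}_{0}$ for $H_c$, and the relevant stationary problem --- (E-N), (E-SC), or (D)$_0$ --- has a solution by Theorem~\ref{thm:additive}. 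One must then show $u(\cdot,t)\to v$ uniformly on $\cO$ for some solution $v$ of that stationary problem. In the Dirichlet case the dichotomy on the sign of $c_{sc}$ merely records whether $g$ is asymptotically attained: if $c_{sc}\le0$ the solution of (CD) attaches to $g$ (here \eqref{comp-cond} is used) and (D)$_0$ governs the limit, whereas if $c_{sc}>0$ the Dirichlet condition is asymptotically relaxed into a state‑constraint condition on $\bO$, so the limit is governed by (E-SC) exactly as in case (ii). After this identification, all three cases run identically.

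Next I would record the a priori information. By Theorem~\ref{thm:existence} (and the Appendix) $u$ is bounded on $\cQ$, Lipschitz in $x$ uniformly in $t$, and has bounded time‑derivative, so $\{u(\cdot,t)\}_{t\ge1}$ is precompact in $C(\cO)$. Introduce the half‑relaxed limits as $t\to\infty$,
\[
u^{+}(x):=\limssup_{t\to\infty}u(x,t),\qquad u^{-}(x):=\limiinf_{t\to\infty}u(x,t),
\]
which are Lipschitz with $u^{-}\le u^{+}$ on $\cO$ and which, by the stability of viscosity (sub/super)solutions --- the boundary condition being understood in the viscosity sense --- are respectively a subsolution and a supersolution of the stationary problem. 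It then suffices to prove $u^{+}=u^{-}$: the common value $v$ is a continuous solution of the stationary problem, and $u(\cdot,t)\to v$ uniformly on $\cO$ by precompactness. To reach $u^{+}=u^{-}$ I would exploit the standard \emph{asymptotic monotonicity}. Since a time‑independent sub/supersolution of the stationary problem is a sub/supersolution of (IB) --- adding a constant preserves this for (CN) and (SC), and one keeps track of the sign for (CD) --- the comparison principle (Theorem~\ref{thm:comparison}) shows that $t\mapsto\max_{\cO}\,(u(\cdot,t)-\chi)$ is nonincreasing for every supersolution $\chi$ and $t\mapsto\min_{\cO}\,(u(\cdot,t)-\phi)$ is nondecreasing for every subsolution $\phi$. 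Let $v$ be the maximal subsolution of the stationary problem with $v\le u^{-}$; a Perron‑type argument, using that $u^{-}$ is a supersolution, shows $v$ is in fact a solution. Applying the monotone quantities to this $v$ gives that $m(t):=\max_{\cO}\,(u(\cdot,t)-v)$ is nonincreasing, $m(t)\downarrow m_{\infty}\ge0$, while $\min_{\cO}\,(u(\cdot,t)-v)$ is nondecreasing; since $u(\cdot,t)\le v+m(t)$ and $v\le u^{-}\le\liminf_{t\to\infty}u(\cdot,t)$, the whole statement reduces to proving $m_{\infty}=0$.

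Ruling out $m_{\infty}>0$ is the decisive step, where (A2)$^{\pm}$ finally enters, and it is the technical heart of the proof (following \cite{BS}). Under (A2)$^{-}_{0}$ one checks that, for $\theta\in(0,1)$ close to $1$, the function $v+\theta(u-v)$ is, in the viscosity sense, a \emph{strict subsolution} of the time‑dependent equation wherever $H_0(x,Du(x,t))\le-\eta<0$, with strictness margin $\theta\psi_\eta(1-\theta)$; dually, under (A2)$^{+}_{0}$, $v+\lambda(u-v)$ with $\lambda>1$ close to $1$ is a \emph{strict supersolution} wherever $H_0(x,Du(x,t))\ge\eta>0$, with margin $\psi_\eta(\lambda-1)$. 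The degenerate region where $|H_0(x,Du)|$ is small, on which (A2)$^{\pm}$ is silent, is treated separately using the coercivity (A1). Combining this strict inequality with the comparison principle, the monotone quantities above, and the boundedness of $u$, one is forced to conclude that $H_0(x,Du(x,t))\to0$ along the times and points where the maximum defining $m(t)$ is asymptotically attained; passing to the limit $t\to\infty$ there produces a subsolution $\widetilde\phi$ of the stationary problem with $\widetilde\phi\le u^{-}$ but $\widetilde\phi>v$ somewhere, contradicting the maximality of $v$. Hence $m_{\infty}=0$, and $u(\cdot,t)\to v$ uniformly on $\cO$.

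Throughout, the oblique condition in (CN) is kept in the viscosity sense --- the merely continuous $\gamma$ and the $C^{1}$‑boundary being precisely what the Appendix theory is built to handle --- the state‑constraint condition in (SC) is automatically preserved under all these constructions, and (CD) is reduced, after the sign analysis of the first step, to a localized version of case (ii) or to the stationary Dirichlet problem. I expect this last part of the crux --- converting (A2)$^{\pm}$ into a genuine strict sub/super‑solution, patching the region where $|H_0|$ is small, and keeping every construction compatible with the viscosity boundary condition under the weak regularity assumptions on $\Omega$ and $\gamma$ --- to be the main obstacle.
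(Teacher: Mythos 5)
The overall scaffolding of your proposal (normalizing the ergodic constant, compactness from the Lipschitz bounds, reducing the Dirichlet case to the state-constraint case when $c_{sc}>0$ and to (D)$_0$ when $c_{sc}\le 0$, and exploiting (A2)$^{\pm}$ through the combinations $v+\mu(u-v)$) is consistent with the paper's strategy. But the decisive step is not proved, and the specific contradiction you propose does not close. You reduce everything to showing $m_\infty=0$ for $m(t)=\max_{\cO}(u(\cdot,t)-v)$, with $v$ the maximal stationary subsolution below $u^-$, and then assert that if $m_\infty>0$ one is ``forced to conclude that $H_0(x,Du(x,t))\to0$'' near the maximizers, so that a subsequential limit $\widetilde\phi$ of $u(\cdot,t_n)$ is a stationary subsolution with $\widetilde\phi\le u^-$ and $\widetilde\phi>v$ somewhere. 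The entire difficulty of the theorem is hidden in that assertion: without (A2), subsequential limits of $u(\cdot,t_n)$ are in general \emph{not} subsolutions of $H_0=0$, because the time derivative need not vanish along a fixed sequence of times --- this is exactly the mechanism by which convergence fails for general nonconvex $H$ --- so your argument, as stated, would prove the theorem with no structural hypothesis at all. You never supply the mechanism by which (A2)$^{\pm}$ kills the time derivative. Likewise, the claim that ``the degenerate region where $|H_0(x,Du)|$ is small is treated separately using the coercivity (A1)'' does not correspond to any workable step: coercivity gives Lipschitz bounds, not control of the set where $H_0$ is near zero. (In the paper there is no such region to treat: the parameter $\eta$ built into the quotient below guarantees $H\ge\eta$ at the only points that matter, and the case $\mu(\sigma)=1$ is vacuous.)

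The paper's mechanism is concrete and different. It introduces the quotient
$\mu_{\eta}^{+}(s)=\min_{x\in\cO,\,t\ge s}\bigl((u_{c}(x,t)-v_{c}(x)+\eta(t-s))/(u_{c}(x,s)-v_{c}(x))\bigr)$
(and its analogue $\mu_\eta^-$), proves via a tripled-variable doubling argument (Lemma \ref{lem:mu}) that it is a supersolution of the obstacle ODE $\max\{w-1,\ w'+\frac{\psi_\eta}{C}(w-1)\}=0$, and deduces $\mu_\eta^{\pm}(s)\to 1$ by comparison with explicit exponentials; this yields the asymptotic monotonicity in time of $u_c$ (Theorem \ref{thm:asymp-mono}), and convergence then follows from compactness together with monotonicity of the limiting flow $u_c^\infty$. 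It is precisely inside the proof of Lemma \ref{lem:mu} that the viscosity boundary conditions are handled (the matrix $A$ with $A\gam(\xi)=n(\xi)$ and the penalization $\al d(\cdot)$ in the oblique case; the interior shift $u^a(x)=u(x+ae_N)$ for the $C^0$ state-constraint and Dirichlet cases). You flag these points as ``the main obstacle'' but do not address them; as it stands the proposal is a plausible plan rather than a proof.
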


Assumption (A2)$^{+}_{a}$ is introduced in \cite{BS} to replace the convexity assumption : it mainly concerns the set $\{H_a \geq 0\}$ and the behavior of $H_a$ in this set. 
Assumption (A2)$^{-}_{a}$ is a modification of (A2)$^{+}_{a}$ which, on the contrary, concerns the set $\{H_a \leq 0\}$. 
We can generalize them as in \cite{BS} (see Remark \ref{rem:general} (ii)) 
but to simplify our arguments we only use the simplified version 
in Theorem \ref{thm:large-time}.  
It is worthwhile to mention that we can deal with 
$u_{0}\in C(\cO)$ and prove that uniform continuous 
solutions of (IB) (instead of Lipschitz continuous solutions) 
have the same large-time asymptotics given by 
Theorem \ref{thm:large-time}. 
See Remark \ref{rem:general} (i).  
We also mention that assumptions 
(A2)$^{+}_{c}$ and (A2)$^{-}_{c}$ are 
equivalent as (A7)$_{+}$ and (A7)$_{-}$ in \cite{II3} 
under the assumption that $H$ is coercive and convex 
with respect to $p$-variable, 
where $c$ is the associated constant as in Theorem \ref{thm:large-time} 
(see \cite[Appencix C]{II1}).

This paper is organized as follows: 
in Section 2 we recall the proof of Theorem \ref{thm:additive}. 
In Section 3 we present \textit{asymptotically monotone property}, 
which is a key tool to prove Theorem \ref{thm:large-time}. 
Section 4 is devoted to the proof of a key lemma 
to prove asymptotically monotone properties. 
In Section 5, we give the proof of Theorem \ref{thm:large-time} 
and we give some remarks on the large-time behavior of solutions 
of convex Hamilton-Jacobi equations in Section 6. 
In Appendix we present the definition, 
existence, uniqueness and regularity results for {\rm (IB)}.

\medskip
\noindent
{\bf Notations.} 
Let $\R^{N}$ denote 
the $N$-dimensional Euclidean space for some $k\in\N$. 
We denote by $|\cdot|$ the usual Euclidean norm. 
We write $B(x,r)=\{y\in\R^{N}\mid |x-y|<r\}$ for $x\in\R^{N},$ 
$r>0$. 
For $A\subset\R^{N}$, 
we denote by 
$C(A)$, $\BUC(A)$, 
$\LSC(A)$, $\USC(A)$  and $C^k(A)$ 
the space of real-valued 
continuous, 
bounded uniformly continuous, 
lower semicontinuous, 
upper semicontinuous  
and $k$-th continuous differentiable functions  
on $A$ for $k\in\N$, respectively.  
We write 
$a\land b=\min\{a,b\}$ and 
$a\vee b=\max\{a,b\}$ for $a,b\in\R$. 
We call a function $m:[0,\infty)\to[0,\infty)$ 
a modulus if it is continuous and nondecreasing 
on $[0,\infty)$ and vanishes at the origin. 
For any $k\in\N\cup\{0\}$ 
we call $\Om$ a domain with a $C^{k}$ boundary if 
$\Om$ is has a boundary $\bO$ locally represented 
as the graph of a $C^{k}$ function, 
i.e., 
for each $z\in\bO$ there exist $r>0$ and 
a function $b\in C^{k}(\R^{N-1})$ such that 
---upon relabelling and re-orienting the coordinates axes 
if necessary---
we have 
\[
\Om\cap B(z,r)=\{(x',x_{N})\in \R^{N-1}\times\R\mid 
x\in B(z,r),  x_{N}>b(x')\}.
\]


\section{Additive Eigenvalue Problem}

\begin{proof}[Proof of Theorem {\rm \ref{thm:additive}}] We just sketch the proof since it is an easy adaptation of classical arguments.

We start with (E-N) and for any $\ep\in (0,1)$, we consider 
\begin{equation}\label{pf:additive1}
\left\{
\begin{aligned}
&
\ep \ue +H(x,D\ue )=0
&& \textrm{in} \ \Om, \\
&B(x,\ue ,D\ue )=0
&& \textrm{on} \ \bO. 
\end{aligned}
\right.
\end{equation}
In order to apply Perron's method, we first build a subsolution 
and a supersolution.

We claim that $\u1e^{-} = -M_1/\ep$ and $\u1e^{+} = M_1/\ep$ 
are respectively sub and supersolution of \eqref{pf:additive1}. 
This is obvious in $\Om$ for a suitable large $M_{1}>0$ since $H(x,0)$ is continuous, hence bounded, on $\cO$. 
If $x \in \bO$, we recall (see \cite{ L2, CIL} for instance) that the super-differential of $\u1e^{-}$ at $x$ consists in elements of the form $\lambda n(x)$ with $\lambda \leq 0$ and we have to show that 
\[
\min\{-M_1 + H(x,\lambda n(x)),\lambda n(x)\cdot\gamma(x) -g(x)\} \leq 0. 
\]
By \eqref{oblique} it is clear enough that there exists $\bar\lambda < 0$ such that, if $\lambda \leq \bar\lambda$, then 
$\lambda n(x)\cdot\gamma(x) -g(x) \leq 0$. Then choosing $M_1 \geq \max\{H(x,\lambda n(x))\mid  x\in \bO,\  \bar\lambda \leq \lambda \leq 0\}$, the above inequality holds. A similar argument shows that $\u1e^{+}$ is a supersolution of \eqref{pf:additive1} for $M_1$ large enough.
 
We remark that, because of (A1) and the regularity of the boundary of $\Om$, the subsolutions $w$ of \eqref{pf:additive1} such that $\u1e^{-}  \leq w \leq \u1e^{+}$ on $\cO$ satisfy
$|Dw|\leq M_2$ in $\Om$ and therefore they are equi-Lipschitz continuous on $\cO$. With these informations, Perron's method provides us with a solution $u_{\ep}\in W^{1,\infty}(\Om)$ of \eqref{pf:additive1}. 
Moreover, by construction, we have
\begin{equation}\label{pf:additive2} 
|\ep u_{\ep}|\le M_{1} \ \textrm{on} \ \cO \quad \hbox{and}\quad
|Du_{\ep}|\le M_{2} \ \textrm{in} \ \Om.
\end{equation}

Next we set $v_{\ep}(x):=u_{\ep}(x)-u_{\ep}(x_{0})$ for a fixed $x_{0}\in\cO$. 
Because of \eqref{pf:additive2} and the regularity of the boundary $\bO$, 
$\{v_{\ep}\}_{\ep\in(0,1)}$ is a sequence of equi-Lipschitz continuous and uniformly bounded functions
on $\cO$.  
By Ascoli-Arzela's Theorem, 
there exist subsequences
$\{v_{\ep_{j}}\}_{j}$ 
and 
$\{u_{\ep_{j}}\}_{j}$ 
such that 
\[
v_{\ep_{j}}\to v, \;
\ep_{j}u_{\ep_{j}}\to -c_{n} \ 
\textrm{uniformly on} \ \cO
\]
as $j\to\infty$ 
for some $v\in W^{1,\infty}(\Om)$ and $c_{n}\in\R$. 
By a standard stability result of viscosity solutions 
we see that $(v,c_{n})$ is a solution of (E-N).

We prove the uniqueness of additive eigenvalues. 
Let $(v_{a},a)\in W^{1,\infty}(\Om)\times\R$ be a subsolution of (E-N). 
Adding a positive constant to $v$ if necessary we may 
assume that $v_{a}\le v$ on $\cO$. 
Note that $v-c_{n}t$ and $v_{a}-at$ 
are, respectively, a solution and a subsolution 
of \eqref{ib-1} and \eqref{ib-2}. 
By the comparison principle for (IB) 
(see Theorem \ref{thm:comparison}), we have 
\[
v_{a}-at\le v-c_{n}t \ \textrm{on} \ \cQ. 
\]
Therefore we have 
\begin{equation}\label{pf:additive3}
v_{a}-v\le (a-c_{n})t \ \textrm{on} \ \cQ.
\end{equation}
If $a<c_{n}$, then \eqref{pf:additive3} implies 
a contradiction for a large $t>0$ since $(a-c_{n})t \to -\infty$ as $t\to +\infty$. 
Therefore we obtain $c_{n}\le a$ and thus 
we see that $c$ can be represented by 
\[
c_{n}=\inf\{a\in\R\mid 
{\rm (E}\textrm{-}{\rm N)} 
\ \textrm{has a subsolution}\}. 
\]
The uniqueness of additive eigenvalues follows immediately.

Next we consider (E-SC). The proof follows along the same lines, except for the following point. \\
(i) 
For a subsolution, it is enough to choose the constant $-m/\ep$, 
where $m= \max_{x\in \cO}H(x,0)$ since only the inequality in $\Om$ is needed. 
For a supersolution, we choose $M/\ep$, where 
$M:=-\min_{\cO\times\R^{N}}H(x,p)$. \\
(ii) Since the boundary has only $C^0$-regularity, 
the solutions are in $W^{1,\infty}(\Om)$ but may not be Lipschitz continuous up to the boundary. But they are equi-continuous (see \cite[Proposition 8.1]{IM}) 
and this is enough to complete the argument.\\
(iii) We have
$$c_{sc} = \inf\{a\in\R\mid \eqref{erg-1} \ \textrm{has a subsolution}\},$$ 
by the same argument as above since, in the state-constraint case, there are no boundary conditions for the subsolutions. 

Finally we consider (D)$_{a}$. 
In the case where $c_{sc}>a$, 
it is clear that 
(D)$_{a}$ has no solutions due to the definition 
of $c_{sc}$. 
Thus we need only to show that 
(D)$_{a}$ has a solution in the case where $c_{sc}\le a$.

Due to the coercivity of $H$, 
there exists $p_0\in\R^{N}$ such that $H(x,p_0)\ge a$ 
for all $x\in\overline\Omega$. 
We set $\psi_1(x)=p_0\cdot x+C$, 
where $C>0$ is a constant chosen 
so that $\psi_1(x)\ge g(x)$ for all $x\in\partial\Om$.  
Since $a \geq c_{sc}$, any solution $\psi_2\in C(\ol\Omega)$
of (E-SC) is a subsolution of \eqref{d-1} and subtracting a sufficiently large constant from $\psi_2$ if necessary, 
we may assume that $\psi_2(x)\le g(x)$ on $\partial\Om$ and $\psi_2\le\psi_1$ on $\ol\Om$.
As a consequence of Perron's method 
we see that there exists a solution $v$ of (D)$_{a}$ 
for any $a\ge c_{sc}$. 
\end{proof}

The following proposition shows that, taking into account the ergodic effect, we obtain bounded solutions of (IB). This result is a straightforward consequence of 
Theorems \ref{thm:additive} and \ref{thm:comparison}.
\begin{prop}[Boundedness of Solutions of {\rm (IB)}]\label{prop:bound}
Assume that {\rm (BA)} holds. Let $c_{n}$ and $c_{sc}$ be the constants given by 
\eqref{additive-const-neumann} and \eqref{additive-const-state}, 
respectively. \\
{\rm (i)} 
Let $u$ be the solution of {\rm (CN)}. 
Then $u+c_{n}t$ is bounded on $\cQ$. \\
{\rm (ii)} 
Let $u$ be the solution of {\rm (SC)}. 
Then $u+c_{sc}t$ is bounded on $\cQ$. \\
{\rm (iii)} 
Let $u$ be the solution of {\rm (CD)}. \\
{\rm (iii-a)} 
If $c_{sc}>0$, 
then $u+c_{sc}t$ is bounded on $\cQ$. \\
{\rm (iii-b)} 
If $c_{sc}\le0$, 
then $u$ is bounded on $\cQ$. 
\end{prop}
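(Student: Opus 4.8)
The plan is to use the fact, already noted in the text, that for a solution $(v,c)$ of (E-N), (E-SC) or (D)$_0$, the function $v(x)-ct$ is an exact solution of the corresponding problem (IB), together with the comparison principle (Theorem~\ref{thm:comparison}) to sandwich the time-dependent solution $u$ between two such stationary states. First, for case (i), I take a solution $(v,c_n)\in W^{1,\infty}(\Om)\times\R$ of (E-N), which exists by Theorem~\ref{thm:additive}(i). Then $v(x)-c_n t$ solves (CN), and so does $v(x)+K-c_n t$ for every constant $K\in\R$ (the Neumann boundary operator $\gamma\cdot p-g$ and the interior equation $u_t+H(x,Du)$ are both invariant under adding a constant to $u$). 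Since $u_0\in\W(\Om)$ is bounded and $v$ is bounded on $\cO$, I can choose $K_{\pm}$ so that $v(x)+K_-\le u_0(x)\le v(x)+K_+$ on $\cO$. Applying Theorem~\ref{thm:comparison} on $\cQ$ to the pair $v+K_--c_nt$ (subsolution) and $u$ (supersolution), and then to $u$ and $v+K_+-c_nt$, yields
\[
v(x)+K_--c_nt\le u(x,t)\le v(x)+K_+-c_nt\quad\textrm{on }\cQ,
\]
hence $\|u(\cdot,t)+c_nt\|_{L^\infty(\cO)}\le \|v\|_{L^\infty(\cO)}+\max\{|K_-|,|K_+|\}$ uniformly in $t$, which is exactly (i).

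Cases (ii) and (iii-a) are identical in structure: in (ii) I use a solution $(v,c_{sc})\in\W(\Om)\times\R$ of (E-SC) (Theorem~\ref{thm:additive}(ii)), noting that the state-constraint boundary operator $B\equiv-1$ and the interior equation are again invariant under adding constants to $u$, so $v+K-c_{sc}t$ solves (SC) for every $K$; the same sandwiching argument gives boundedness of $u+c_{sc}t$. In case (iii-a), since $c_{sc}>0$, Theorem~\ref{thm:additive}(iii) gives a solution $v\in\W(\Om)$ of (D)$_{c_{sc}}$, i.e. $v=g$ on $\bO$ and $H(x,Dv)=c_{sc}$ in $\Om$; then $v(x)-c_{sc}t$ solves (CD). Here one must be slightly more careful with the comparison: I need sub/supersolutions of (CD) that bracket $u_0$ at $t=0$ while respecting the Dirichlet data. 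For the supersolution, since the Dirichlet condition is understood in the viscosity sense (so $u\le g$-type data can only be exceeded in a controlled way), $v(x)-c_{sc}t+K_+$ with $K_+\ge 0$ and $K_+\ge\sup_{\cO}(u_0-v)$ is a supersolution (adding the nonnegative constant $K_+$ keeps the Dirichlet inequality $r-g\le 0$ on the correct side). For the subsolution, I take $v(x)-c_{sc}t+K_-$ with $K_-\le 0$ small enough that $v+K_-\le u_0$ on $\cO$; since $K_-\le 0$ and $v=g$ on $\bO$, this still satisfies $B=r-g\le 0$ on $\bQ$, so it is a subsolution of (CD). Comparison then yields the two-sided bound and hence boundedness of $u+c_{sc}t$.

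For case (iii-b), where $c_{sc}\le 0$, Theorem~\ref{thm:additive}(iii) applies with $a=0\ge c_{sc}$ and produces $v\in\W(\Om)$ solving (D)$_0$: $v=g$ on $\bO$, $H(x,Dv)=0$ in $\Om$. Then $v(x)$ (time-independent) solves (CD), and so does $v(x)+K$ for $K\ge 0$ (supersolution side, as above) while $v(x)+K$ with $K\le 0$ remains a subsolution. Choosing $K_\pm$ to bracket $u_0$ at $t=0$ and invoking Theorem~\ref{thm:comparison} gives $v+K_-\le u\le v+K_+$ on $\cQ$, so $u$ itself is bounded, independently of $t$. The only genuine point requiring attention---the ``main obstacle,'' though it is a mild one---is checking in the Dirichlet cases that adding a constant of the correct sign to the stationary solution preserves the viscosity sub/supersolution property of the boundary condition $r-g=0$; this is where one uses that $u_0\le g$ on $\bO$ (the compatibility condition \eqref{comp-cond}) so that the bracketing constants can indeed be chosen with the required signs. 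Everything else is a direct application of Theorems~\ref{thm:additive} and~\ref{thm:comparison}, as the statement already advertises.
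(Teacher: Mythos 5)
Your overall strategy --- produce a stationary state $v$ from Theorem~\ref{thm:additive}, shift it by constants $K_{\pm}$ to bracket $u_{0}$, and apply Theorem~\ref{thm:comparison} to the exact solutions $v+K_{\pm}-ct$ of (IB) --- is exactly what the paper means by ``straightforward consequence,'' and your treatment of (i), (ii) and (iii-b) is correct. (In (ii) the point you use implicitly is that a supersolution of (SC) satisfies $u_t+H\ge 0$ up to the boundary, which is precisely what makes $v+K-c_{sc}t$ a supersolution of (SC); in (iii-b) the added constants have the right signs because $c=0$, so the boundary term $r-g$ is not pushed in the wrong direction by time.)

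The gap is in (iii-a). You take $v$ solving (D)$_{c_{sc}}$ and assert that $v(x)-c_{sc}t+K_+$ is a supersolution of (CD). The viscosity supersolution condition on $\bQ$ is $\max\{\Phi_t+H(x_0,D\Phi),\,u-g\}\ge 0$, and for a test function touching $v-c_{sc}t+K_+$ from below at $(x_0,t_0)$ with $t_0>0$ one has $\Phi_t(x_0,t_0)=-c_{sc}$, so the condition reads $\max\{H(x_0,D\Phi)-c_{sc},\,v(x_0)+K_+-c_{sc}t_0-g(x_0)\}\ge 0$. The stationary condition for (D)$_{c_{sc}}$ only gives $\max\{H-c_{sc},\,v-g\}\ge 0$, and at boundary points where this holds solely because $v(x_0)=g(x_0)$ (the generic situation for a viscosity solution of a Dirichlet problem), the second entry of the time-dependent max equals $K_+-c_{sc}t_0$, which is negative for large $t_0$ since $c_{sc}>0$, while nothing forces $H(x_0,D\Phi)\ge c_{sc}$ for the troublesome test functions. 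So the claimed supersolution property can genuinely fail; your parenthetical about ``adding the nonnegative constant $K_+$'' addresses the wrong term --- the danger comes from $-c_{sc}t$, not from $K_+$. The fix is to take the upper barrier from (E-SC) instead: by Theorem~\ref{thm:additive}(ii) there is $w\in\W(\Om)$ with $H(x,Dw)\ge c_{sc}$ on all of $\cO$ in the viscosity sense, so $w+K_+-c_{sc}t$ satisfies $u_t+H\ge 0$ up to the boundary and is therefore a supersolution of (CD) irrespective of the Dirichlet datum. Your lower barrier is fine as written (a subsolution of \eqref{erg-1} with $a=c_{sc}$, shifted so that it lies below $u_0$ on $\cO$ and below $g$ on $\bO$, satisfies $r-g\le -c_{sc}t\le 0$ on $\bQ$ and hence the subsolution boundary condition).
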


\section{Asymptotically Monotone Property}

Following Proposition~\ref{prop:bound}, it turns out that, in order to go further in the study of the asymptotic behavior of the solutions $u$ of (IB), we have to consider $u_{c}:=u+ct$, where $c$ is the associated additive eigenvalue: $c:= c_{n}$ for (CN), $c:=c_{sc}$ for (SC) and $c:=c_{sc}$ if $c_{sc}>0$ and $c:=0$ if $c_{sc}\le0$ for (CD). A first step to prove the convergence of the $u_c$'s is the following result on their asymptotic monotonicity in time which can be stated exactly in the same way for the three cases, namely (CN), (SC) and (CD).

\begin{thm}[Asymptotically Monotone Property]\label{thm:asymp-mono} 
Assume that {\rm (BA)} holds. \\
{\rm (i)} 
{\bf (Asymptotically Increasing Property)} \\
Assume that {\rm(A2)}$_{c}^{+}$ holds. 
For any $\eta\in(0,\eta_{0}]$, there exists 
$\del_{\eta}: [0,\infty)\to[0,1]$ such that 
\begin{align*}
&\del_{\eta}(s)\to0 \quad\textrm{as} \ s\to\infty \quad 
\textrm{and}\\
&u_{c}(x,s)-u_{c}(x,t)+\eta(s-t)\le \del_{\eta}(s)
\end{align*}
for all $x\in\cO$, $s,t\in[0,\infty)$ with $t\ge s$. \\
{\rm (ii)} {\bf (Asymptotically Decreasing Property)} \\
Assume that {\rm(A2)}$_{c}^{-}$ holds. 
For any $\eta\in(0,\eta_{0}]$, there exists 
$\del_{\eta}: [0,\infty)\to[0,\infty)$ such that 
\begin{align*}
&\del_{\eta}(s)\to0 \quad\textrm{as} \ s\to\infty \quad 
\textrm{and}\\
&u_{c}(x,t)-u_{c}(x,s)-\eta(t-s)\le \del_{\eta}(s)
\end{align*}
for all $x\in\cO$, $s,t\in[0,\infty)$ with $t\ge s$. 
\end{thm}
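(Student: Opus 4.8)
The plan is to prove part (i), the asymptotically increasing property, under assumption (A2)$_{c}^{+}$; part (ii) follows by an entirely symmetric argument using (A2)$_{c}^{-}$ (reversing inequalities and exchanging the roles of sub- and supersolutions, with $\mu\ge1$ in place of $\mu\in(0,1]$). Recall that $u_{c}=u+ct$ is bounded on $\cQ$ by Proposition~\ref{prop:bound} and solves $(u_{c})_{t}+H_{c}(x,Du_{c})=0$ in $\Q$ with the appropriate boundary condition. Fix $\eta\in(0,\eta_{0}]$ and let $\psi_{\eta}>0$ be the constant from (A2)$_{c}^{+}$. For parameters $\mu\in(0,1)$ close to $1$ and $\sigma\ge0$, the key idea is to compare $u_{c}(x,t)$ with the rescaled-in-gradient function built from $u_{c}(\cdot,\cdot+\sigma)$, roughly of the form
\[
w(x,t):=\mu\, u_{c}(x,t+\sigma)+(1-\mu)\bigl(v(x)-\eta t\bigr)+K(1-\mu),
\]
where $v$ is an ergodic (sub)solution of the appropriate stationary problem (E-N), (E-SC), or (D)$_{0}$ with the relevant constant, normalized by a large constant $K$ so that $w(x,0)\le u_{c}(x,\sigma)\le u_{c}(x,0)+\|u_c\|_\infty$-type bounds hold at $t=0$. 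The point of (A2)$_{c}^{+}$ is exactly that such a convex-type combination, when the gradient of $u_{c}(\cdot,t+\sigma)$ lies in $\{H_{c}\ge\eta\}$, produces a strict gain $\psi_{\eta}(1-\mu)$ which compensates the $-\eta t$ drift and makes $w$ a subsolution of the equation satisfied by $u_{c}(\cdot,\cdot)$; on the complementary set $\{H_{c}<\eta\}$ one has instead an almost-monotonicity coming directly from the sign of $(u_c)_t$.

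Concretely, the main steps are: (1) establish that, for $\mu$ close enough to $1$, the function $w$ above is a viscosity subsolution of $(ib\textrm{-}1)$--$(ib\textrm{-}2)$ with $H$ replaced by $H_{c}$ on $\Om\times(0,\infty)$ — this is where the bulk of the work lies and where the key lemma announced for Section~4 is invoked, which unwinds (A2)$_{c}^{+}$ at the level of test functions and also handles the boundary condition (for Neumann/oblique, checking that the combination respects $\gamma\cdot Dw\le g$; for state constraint there is nothing to check; for Dirichlet, using $v=g$ on $\bO$ and the compatibility condition); (2) choose $K$ large so that $w(\cdot,0)\le u_{c}(\cdot,\sigma)$ on $\cO$, using boundedness of $u_c$ and of $v$; (3) apply the comparison principle Theorem~\ref{thm:comparison} to get $w(x,t)\le u_{c}(x,t)$ on $\cQ$, i.e.
\[
\mu\,u_{c}(x,t+\sigma)-u_{c}(x,t)\le \eta(1-\mu)t-(1-\mu)v(x)-K(1-\mu);
\]
(4) let $\mu\uparrow1$ and rearrange: writing $1-\mu=\lambda$ and optimizing/sending $\lambda\to0$ appropriately (here one typically sends $\mu\to1$ along a rate tied to $\sigma$, e.g. $1-\mu\sim s/t$ or balances the $\lambda t$ and $(1-\mu)u_c$ terms) yields a bound of the shape $u_{c}(x,t+\sigma)-u_{c}(x,t)+\eta\sigma\le \text{(something small in }t)$. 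Relabelling $s:=t$ in place and $s+\sigma=t$ — more precisely setting the later time to be $t$ and the earlier to be $s$ — recasts this as $u_{c}(x,s)-u_{c}(x,t)+\eta(s-t)\le\delta_{\eta}(s)$ with $\delta_{\eta}(s)\to0$ as $s\to\infty$; crucially $\delta_\eta$ must be shown independent of the later time $t\ge s$, which comes from the fact that the right-hand side in step (3), after optimizing in $\mu$, depends on $t$ only through a term that is uniformly controlled once $s$ is large (using $\|u_c\|_\infty<\infty$).

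The main obstacle I expect is step (1): verifying that $w$ (or the appropriate inf-convolution-type variant one really needs) is a subsolution. The naive combination $\mu u_c(x,t+\sigma)+(1-\mu)(v-\eta t)$ has the wrong gradient-addition structure for a direct application of (A2)$_{c}^{+}$, because that assumption is stated for $H_c(x,p+q)$ with $q$ in the zero-sublevel set and $p/\mu+q$ the rescaled argument — so one must identify $q=Dv(x)$ (which satisfies $H_c(x,Dv)\le0$ since $v$ is an ergodic subsolution) and $p$ as $\mu$ times (gradient of $u_c$ minus $Dv$), and then the test-function computation has to produce exactly the term $\psi_\eta(1-\mu)$ to beat the $\eta$-drift. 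Making this rigorous at the level of viscosity solutions — doubling variables, or using the fact that $u_c$ is Lipschitz so its gradient exists a.e. and the equation holds a.e. — together with the extra care needed because $v$ is only Lipschitz (and only continuous up to the boundary in the state-constraint case), is the technical heart, and it is precisely what the key lemma of Section~4 is designed to supply. The boundary analysis is comparatively routine once the interior statement is in hand, since in each of the three cases the structure of $B$ interacts well with convex combinations (linearity of $\gamma\cdot p$ for Neumann, vacuity for state constraint, and $v=g$ on $\bO$ for Dirichlet).
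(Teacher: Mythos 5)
Your proposal does identify the right way to feed (A2)$_{c}^{+}$ (take $q=Dv$ with $v$ an ergodic subsolution so that $H_{c}(x,q)\le 0$, and $p=\mu(Du_{c}-Dv)$ so that $p/\mu+q=Du_{c}$), but the global strategy has a genuine gap at its central step. The function $w=\mu\,u_{c}(\cdot,\cdot+\sigma)+(1-\mu)(v-\eta t)+K(1-\mu)$ is \emph{not} a subsolution on the set where $H_{c}(x,Dw)<\eta$: there (A2)$_{c}^{+}$ gives no information, and your fallback --- ``almost-monotonicity coming directly from the sign of $(u_c)_t$'' --- is circular, since $(u_c)_t=-H_{c}(x,Du_{c})$ has no sign (asymptotic monotonicity of $u_c$ is precisely what is being proved). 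This is not a repairable technicality: an assumption that only bites on $\{H_{c}\ge\eta\}$ cannot certify a subsolution property that must hold at every point. There are also directional problems downstream: comparison starting from $w(\cdot,0)\le u_{c}(\cdot,\sigma)$ yields $w(x,t)\le u_{c}(x,t+\sigma)$, which bounds the \emph{later} value by the \emph{earlier} one (or degenerates to a vacuous inequality), whereas the increasing property requires $u_{c}(x,s)\le u_{c}(x,t)+\eta(t-s)+\delta_{\eta}(s)$ for $t\ge s$; and the term $\eta(1-\mu)t$ in your step (3) grows linearly in $t$, so the claimed optimization in $\mu$ producing a $\delta_{\eta}(s)$ independent of $t$ and vanishing as $s\to\infty$ is not established.

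The paper's device is designed exactly to avoid this: after normalizing $1\le u_{c}-v_{c}\le C$, one introduces the quotient $\mu_{\eta}^{+}(s)=\min_{x,\,t\ge s}\bigl(u_{c}(x,t)-v_{c}(x)+\eta(t-s)\bigr)/\bigl(u_{c}(x,s)-v_{c}(x)\bigr)$ and proves (Lemma \ref{lem:mu}, by tripling variables around a minimum point) that it is a supersolution of the ODE obstacle problem $\max\{w-1,\,w'+\tfrac{\psi_{\eta}}{C}(w-1)\}=0$; ODE comparison then gives $\mu_{\eta}^{+}(s)\to 1$, and the theorem follows in two lines from the definition of $\mu_{\eta}^{+}$. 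The key point you are missing is that the constraint $H_{c}\ge\eta$ is not imposed by hand: at the contact point realizing the minimum of the quotient, the viscosity \emph{supersolution} inequality for $u_{c}$ at the later time, combined with the $\eta(t-s)$ term in the numerator, automatically produces $H(\xi,\tilde P+\tilde Q)\ge\eta$ in the limit, so (A2)$_{c}^{+}$ is always applicable where it is needed. Without the minimization over $(x,t)$ with $t\ge s$ (or some equivalent localization onto the good set), your convex-combination argument cannot close.
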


In the same way as for $u_c$, we denote by $v_{c}$ a subsolution of (E) which means, of course, either (E-N) or (E-SC). We keep the subscript ``$c$'' to remind that it is associated to either $c_{n}$ or $c_{sc}$. 

By Proposition \ref{prop:bound} $u_{c}$ is bounded on $\cQ$. 
Note that $v_{c}-M$ are still subsolutions of (E) or (D)$_{c}$ for 
any $M>0$. 
Therefore subtracting a positive constant to $v_{c}$ if necessary, 
we may assume that 
\begin{equation}\label{bdd:u-v}
1\le u_{c}(x,t)-v_{c}(x)\le C 
\quad\textrm{for all} \ (x,t)\in\cQ \ \textrm{and some} \ 
C>0 
\end{equation}
and we fix such a constant $C$. 
We define the functions
$\mu_{\eta}^{+}, \mu_{\eta}^{-}:[0,\infty)\to\R$ by 
\begin{align}
&
\mu_{\eta}^{+}(s):=\min_{x\in\cO, t\ge s}
\Bigr(\frac{u_{c}(x,t)-v_{c}(x)+\eta(t-s)}{u_{c}(x,s)-v_{c}(x)}\Bigr), 
\label{def:mu-plus} \\
&
\mu_{\eta}^{-}(s):=\max_{x\in\cO, t\ge s}
\Bigr(\frac{u_{c}(x,t)-v_{c}(x)-\eta(t-s)}{u_{c}(x,s)-v_{c}(x)}\Bigr)
\nonumber
\end{align}
for $\eta\in(0,\eta_{0}]$. 
By the uniform continuity of $u_{c}$ and $v_{c}$, 
we have 
$\mu_{\eta}^{\pm}\in C([0,\infty))$.  
It is easily seen that
$0\le\mu_{\eta}^{+}(s)\le 1$ and 
$\mu_{\eta}^{-}(s)\ge 1$ 
for all $s\in[0,\infty)$ 
and $\eta\in(0,\eta_{0}]$.

\begin{prop}\label{prop:mu-to-1}
Assume that {\rm (BA)} holds.\\ 
{\rm (i)} 
Assume that {\rm(A2)}$_{c}^{+}$ holds. 
We have $\mu_{\eta}^{+}(s)\to 1$ 
as $s\to\infty$ for any $\eta\in(0,\eta_{0}]$. \\
{\rm (ii)} 
Assume that {\rm(A2)}$_{c}^{-}$ holds. 
We have $\mu_{\eta}^{-}(s)\to 1$ 
as $s\to\infty$ for any $\eta\in(0,\eta_{0}]$. \\
\end{prop}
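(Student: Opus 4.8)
The plan is to prove (i); case (ii) follows by the symmetric argument (replace {\rm(A2)}$_c^+$ by {\rm(A2)}$_c^-$, the dilation parameter $\mu\in(0,1]$ by the compression $\mu\ge1$, $\mu_\eta^+$ by $\mu_\eta^-$, and reverse all inequalities and extrema). By \eqref{bdd:u-v} the numerator in \eqref{def:mu-plus} is $\ge 1$ and the denominator $\le C$, so $1/C\le\mu_\eta^+(s)\le 1$ for every $s$; hence it is enough to show $\liminf_{s\to\infty}\mu_\eta^+(s)=1$. I argue by contradiction: fix $s_k\to\infty$ with $\mu_\eta^+(s_k)\to m\in[1/C,1)$. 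Since $u_c$ is bounded (Proposition~\ref{prop:bound}) and equicontinuous on $\cQ$ (by {\rm(A1)}, the equation and the regularity in Theorem~\ref{thm:existence}/the Appendix), Ascoli--Arzel\`a gives, along a subsequence, $u_c(\cdot,\,\cdot+s_k)\to u_\infty$ locally uniformly on $\cO\times\R$, and by stability of viscosity solutions $u_\infty$ solves $(u_\infty)_t+H_{c}(x,Du_\infty)=0$ in $\Om\times\R$ together with the relevant boundary condition on $\bO\times\R$, with $1\le u_\infty-v_c\le C$. The minimizers $(\bar x_k,\bar t_k)$ realizing $\mu_\eta^+(s_k)$ satisfy $0\le\bar t_k-s_k\le C/\eta$ (for $\bar t_k-s_k>C/\eta$ the ratio in \eqref{def:mu-plus} exceeds $1$), so after a further extraction $\bar x_k\to\bar x$ and $\bar t_k-s_k\to\bar t\ge0$.

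Letting $k\to\infty$ in $u_c(x,t)-v_c(x)+\eta(t-s_k)\ge\mu_\eta^+(s_k)(u_c(x,s_k)-v_c(x))$, $t\ge s_k$ — and in its time-translates, using $\liminf_s\mu_\eta^+(s)=m$ — yields
\[
u_\infty(x,\sigma+t)-v_c(x)+\eta t\ \ge\ m\bigl(u_\infty(x,\sigma)-v_c(x)\bigr)\qquad(x\in\cO,\ \sigma\in\R,\ t\ge0);
\]
the $\sigma=0$ case reads $u_\infty\ge\phi$ on $\cO\times[0,\infty)$, where $\phi(x,t):=m\,u_\infty(x,0)+(1-m)\,v_c(x)-\eta t$, with equality at $(\bar x,\bar t)$. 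Because $u_\infty(\cdot,0)-v_c(\cdot)\ge 1>0$ and $m<1$, this inequality is strict at $t=0$, so $\bar t>0$.

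The crux, and the point where {\rm(A2)}$_c^+$ enters, is to upgrade $\phi$ to a \emph{strict} subsolution — with a positive margin $\delta=\delta(\psi_\eta,m)>0$ — of $z_t+H_c(x,Dz)=0$ together with the appropriate boundary condition; this is the content of the key lemma of Section~4. Roughly, one tests $\phi$ from above, and after a doubling of variables the test gradient is written as a combination of a (super)gradient of $u_\infty(\cdot,0)$ and one of $v_c$; the latter satisfies $H_c\le 0$ since $v_c$ is a subsolution of (E), and {\rm(A2)}$_c^+$ — applied with $\mu=m$, legitimate because $m\in(0,1)$ — converts this into a strictly favourable bound, of order $\psi_\eta(1-m)$, for the combination whenever $H_c$ of the relevant gradient is $\ge\eta$; in the complementary regime the $-\eta t$ term of $\phi$ supplies the slack, and the boundary term is controlled using that $v_c$ satisfies $B\le 0$ on $\bO$ and that $u_\infty$ satisfies its boundary condition in the viscosity sense (for state constraints only the interior inequality is needed). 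Granting this, $\phi+\delta t$ is a subsolution of $z_t+H_c(x,Dz)=0$ with the correct boundary condition and $(\phi+\delta t)(\cdot,0)=\phi(\cdot,0)<u_\infty(\cdot,0)$, so the comparison principle for (IB) (Theorem~\ref{thm:comparison}) gives $\phi(x,t)+\delta t\le u_\infty(x,t)$ on $\cQ$; evaluating at $(\bar x,\bar t)$ gives $u_\infty(\bar x,\bar t)=\phi(\bar x,\bar t)\le u_\infty(\bar x,\bar t)-\delta\bar t$, i.e.\ $\delta\bar t\le0$, which contradicts $\bar t>0$. Hence $m=1$, i.e.\ $\mu_\eta^+(s)\to1$.

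The main obstacle is exactly this strict-subsolution step (the key lemma): $\phi$ is only Lipschitz and the boundary conditions hold only in the viscosity sense, so it must be established through test functions and a doubling-of-variables argument — handling the two gradient regimes at once and treating the Neumann, state-constraint and Dirichlet boundary conditions separately — rather than by differentiating $\phi$; this is why the lemma gets its own section.
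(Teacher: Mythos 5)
Your proposal does not follow the paper's route (which deduces the proposition in a few lines from Lemma \ref{lem:mu}, by comparing $\mu_{\eta}^{\pm}$ with the explicit exponential sub/supersolutions of the variational inequalities \eqref{variational-ineq} and \eqref{variational-ineq2}); instead you set up a compactness/contradiction argument on a limit $u_\infty$ of time translates. The soft parts of that argument are fine (extraction of $u_\infty$, the limit inequality $u_\infty\ge\phi$ with equality at some $(\bar x,\bar t)$, $\bar t>0$, the bound $\bar t_k-s_k\le C/\eta$, and the final comparison step \emph{granted} the strict-subsolution claim). But the central step is not established, and I do not believe it can be established as stated.

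The gap is the claim that $\phi(x,t)=m\,u_\infty(x,0)+(1-m)v_c(x)-\eta t$ is, up to adding $\delta t$, a subsolution of the evolution problem. First, this is not ``the content of the key lemma of Section 4'': Lemma \ref{lem:mu} asserts a differential inequality for the scalar function $s\mapsto\mu_\eta^+(s)$, not a subsolution property of a convex combination with the denominator time frozen at a single slice. Second, the scheme you sketch cannot close. After doubling, the test gradient decomposes as $mp+(1-m)q$ with $q$ a supergradient of $v_c$ (so $H_c(\cdot,q)\le0$) and $p$ a supergradient of the slice $u_\infty(\cdot,0)$. In the regime $H_c(x,mp+(1-m)q)\ge\eta$, assumption (A2)$_c^+$ with $\mu=m$ gives $H_c(x,mp+(1-m)q)\le m\,H_c(x,p)-\psi_\eta(1-m)$, which is useful only together with an upper bound on $H_c(x,p)$ of order $\eta/m$. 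Such a bound amounts to a lower bound on $(u_\infty)_t(\cdot,0)$, and nothing in your construction provides one: a supergradient of $u_\infty(\cdot,0)$ in $x$ alone does not yield an element of the parabolic superjet of the eternal solution $u_\infty$ with a controlled time component, and the inequality $u_\infty\ge\phi$ gives no derivative information at $t=0$ because it is strict there (the two sides differ by at least $1-m$). This missing bound is exactly what the paper's construction supplies by keeping the ``denominator'' time $s$ as a live variable: the $s$-derivative of $\Psi$ at the minimum, combined with the subsolution inequality for $u$ at $(\bar y,\bar s)$ (inequality \eqref{pf-main:ineq2}), bounds $H$ at the denominator gradient by $\frac{1}{\mu(\sigma)}(\eta+\mu_1\phi'(\sigma))$ — and this is precisely why the natural output is the ODE \eqref{variational-ineq} for $\mu_\eta^+$ rather than a PDE subsolution property of a frozen slice. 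To repair your argument you would have to reintroduce the second time variable and track how the minimum value varies with $s$, at which point you are reproving Lemma \ref{lem:mu}; the intended proof of the present proposition is then simply the ODE comparison with $1+(\mu_\eta^+(0)-1)\exp(-\psi_\eta s/C)$.
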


Proposition~\ref{prop:mu-to-1} is a consequence of the following lemma. 

\begin{lem}[Key Lemma]\label{lem:mu}
Assume that {\rm (BA)} holds.
Let $C$ be the constant given by \eqref{bdd:u-v}. \\
{\rm (i)} 
Assume that {\rm(A2)}$_{c}^{+}$ holds. 
The function $\mu_{\eta}^{+}$ is a supersolution of  
\begin{equation}\label{variational-ineq}
\max\{w(s)-1, 
w^{'}(s)+\frac{{\psi_{\eta}}}{C}(w(s)-1)\}
=0 \ \textrm{in} \ (0,\infty) 
\end{equation}
for any $\eta\in(0,\eta_{0}]$. \\
{\rm (ii)} 
Assume that {\rm(A2)}$_{c}^{-}$ holds. 
The function $\mu_{\eta}^{-}$ is a subsolution of  
\begin{equation}\label{variational-ineq2}
\max\{w(s)-1, 
w^{'}(s)
+\frac{\psi_{\eta}}{C}\cdot\frac{w(s)-1}{w(s)}\}
=0 \ \textrm{in} \ (0,\infty) 
\end{equation}
for any $\eta\in(0,\eta_{0}]$. 
\end{lem}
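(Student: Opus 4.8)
\medskip
\noindent\textbf{Proof strategy for Lemma~\ref{lem:mu}.}
I focus on (i); (ii) is symmetric. Since $1/C\le\mu_\eta^+\le1$ on $[0,\infty)$ by \eqref{bdd:u-v}, the obstacle part $w-1\le0$ of \eqref{variational-ineq} holds automatically, so the supersolution property reduces to the claim: if $\phi\in C^1$ and $\mu_\eta^+-\phi$ has a local minimum at $s_0\in(0,\infty)$ with $\mu:=\mu_\eta^+(s_0)<1$, then $\phi'(s_0)\ge\tfrac{\psi_\eta}{C}(1-\mu)$ (for $\mu=1$ there is nothing to prove). After the usual strictification of the minimum and shrinking to an interval $I=(s_0-\rho,s_0+\rho)$ on which $0<\phi<1$, I first note that the minimum defining $\mu_\eta^+(s_0)$ is attained at some $(x_0,t_0)$: for fixed $x$ the quotient tends to $+\infty$ as $t\to\infty$ since its numerator is $\ge1+\eta(t-s_0)$ while the denominator stays in $[1,C]$. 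As the quotient equals $1>\mu$ at $t=s_0$, necessarily $t_0>s_0$; and with $b:=u_c(x_0,s_0)-v_c(x_0)\in[1,C]$ we have $\mu b=u_c(x_0,t_0)-v_c(x_0)+\eta(t_0-s_0)$, together with
\[
u_c(x,t)-v_c(x)+\eta(t-s)\ \ge\ \phi(s)\,\bigl(u_c(x,s)-v_c(x)\bigr)\qquad(x\in\cO,\ s\in I,\ t\ge s),
\]
with equality at $(x_0,s_0,t_0)$.

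Equivalently, $(x_0,s_0,t_0)$ minimizes $G(x,s,t):=u_c(x,t)+\eta(t-s)-\phi(s)u_c(x,s)-(1-\phi(s))v_c(x)$ over $\cO\times I\times[s,\infty)$. Because $v_c$ is merely Lipschitz and $u_c$ enters at two times, I cannot test with them directly, so I triple the space variable: for $\alpha>0$ large I minimize over $\cO\times\cO\times\cO\times I\times[s,\infty)$
\[
G_\alpha(x,y,z,s,t):=u_c(x,t)+\eta(t-s)-\phi(s)u_c(y,s)-(1-\phi(s))v_c(z)+\alpha\bigl(|x-y|^2+|y-z|^2\bigr),
\]
together with the standard quartic localizing terms centered at $(x_0,x_0,x_0,t_0)$. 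A minimizer $(x_\alpha,y_\alpha,z_\alpha,s_\alpha,t_\alpha)$ exists (the $t$-behaviour again confines $t_\alpha$), and the usual penalization estimates give $(x_\alpha,y_\alpha,z_\alpha,s_\alpha,t_\alpha)\to(x_0,x_0,x_0,s_0,t_0)$ with $\alpha(|x_\alpha-y_\alpha|^2+|y_\alpha-z_\alpha|^2)\to0$; in particular $t_\alpha>s_\alpha$ and $s_\alpha\in I$ for $\alpha$ large.

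Now I use that $u_c$ is a solution of $(u_c)_t+H_c(x,Du_c)=0$, where $H_c:=H-c$ (hence both a sub- and a supersolution), and that $v_c$ is a subsolution of $H_c(x,Dv_c)=0$. Reading off the minimality of $G_\alpha$ in the three relevant blocks of variables gives three viscosity inequalities. At $(x_\alpha,t_\alpha)$, $u_c$ is touched from below in $(x,t)$ by $-\alpha|x-y_\alpha|^2-\eta(t-s_\alpha)$, so the supersolution inequality (using $t_\alpha>s_\alpha$) gives $H_c(x_\alpha,p_\alpha)\ge\eta$ with $p_\alpha:=2\alpha(y_\alpha-x_\alpha)$. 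At $z_\alpha$, $v_c$ is touched from above by $\tfrac{\alpha}{1-\phi(s_\alpha)}|y_\alpha-z|^2$, so $H_c(z_\alpha,q_\alpha)\le0$ with $q_\alpha:=\tfrac{2\alpha(z_\alpha-y_\alpha)}{1-\phi(s_\alpha)}$; by coercivity {\rm(A1)}, $\{q_\alpha\}$ is bounded. At $(y_\alpha,s_\alpha)$, $u_c$ is touched from above in $(y,s)$ (the coefficient $-\phi(s)$ is negative), and dividing the subsolution inequality by $\phi(s_\alpha)>0$ and computing the derivatives of the test function yields
\[
-\eta-\phi'(s_\alpha)b_\alpha+\phi(s_\alpha)\,H_c\!\Bigl(y_\alpha,\tfrac{p_\alpha-(1-\phi(s_\alpha))q_\alpha}{\phi(s_\alpha)}\Bigr)\le0,\qquad b_\alpha:=u_c(y_\alpha,s_\alpha)-v_c(z_\alpha)\to b.
\]
This forces $H_c(y_\alpha,\cdot)$ to be bounded above, hence by {\rm(A1)} its argument --- and therefore $p_\alpha$ --- are bounded. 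Passing to a subsequence, $p_\alpha\to p_*$, $q_\alpha\to q_*$, and continuity of $H$ gives, at $x_0$, the three relations $H_c(x_0,p_*)\ge\eta$, $H_c(x_0,q_*)\le0$, and $-\eta-\phi'(s_0)b+\mu H_c\!\bigl(x_0,\tfrac{p_*-(1-\mu)q_*}{\mu}\bigr)\le0$. Applying {\rm(A2)}$^{+}_{c}$ at $x_0$ with $q:=q_*$, $p:=p_*-q_*$ (so $H_c(x_0,p+q)\ge\eta$ and $H_c(x_0,q)\le0$) and $\mu\in(0,1]$, and noting $\tfrac{p}{\mu}+q=\tfrac{p_*-(1-\mu)q_*}{\mu}$, one gets $\mu H_c\!\bigl(x_0,\tfrac{p_*-(1-\mu)q_*}{\mu}\bigr)\ge\eta+\psi_\eta(1-\mu)$; combined with the third relation, $\psi_\eta(1-\mu)\le\phi'(s_0)b$, whence $\phi'(s_0)\ge\psi_\eta(1-\mu)/b\ge\psi_\eta(1-\mu)/C$ since $0<b\le C$. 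This proves (i).

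Part (ii) is the mirror image: one tests $\mu_\eta^-$ from above, uses $u_c$ as a subsolution in the $(x,t)$-block (producing $H_c(x_\alpha,\tilde p_\alpha)\le-\eta$) and as a supersolution in the $(y,s)$-block, takes $v_c$ to be a \emph{solution} of the corresponding stationary problem so that its supersolution inequality ($H_c(z_\alpha,\tilde q_\alpha)\ge0$) is available, and invokes {\rm(A2)}$^{-}_{c}$ with $\mu=\mu_\eta^-(s_0)\ge1$; together with $\mu_\eta^-\ge1$ this yields the subsolution inequality for \eqref{variational-ineq2}. I expect the only real difficulty --- in both parts --- to be the case in which one or more of the contact points $x_\alpha,y_\alpha,z_\alpha$ lies on $\bO$: there the viscosity conditions for $u_c$ and $v_c$ degenerate onto the boundary operator $B$, and one must show that this alternative either cannot occur or reduces to the interior inequality used above. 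Following \cite{BS}, this is handled using the obliqueness \eqref{oblique} for the Neumann problem (typically by bending the penalization near $\bO$ along $\gamma$), the fact that subsolutions of the state-constraint ergodic problem satisfy the equation up to $\bO$, and the compatibility \eqref{comp-cond} for the Dirichlet problem; this is also where the regularity of $\bO$ built into {\rm(BA)} enters.
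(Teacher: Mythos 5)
Your interior argument is essentially that of \cite{BS} and is sound: the linearization of the quotient through the normalized test function $\phi$, the tripling of the space variable, the three viscosity inequalities, and the application of {\rm(A2)}$_c^{+}$ with $p=p_*-q_*$, $q=q_*$ all go through, and this is exactly the portion of the proof that the paper itself dispatches with ``the same argument as in \cite{BS}''. The actual content of the lemma in the present boundary-value setting, however, is the case $\xi\in\bO$ (and, after tripling, the cases $x_\alpha,y_\alpha,z_\alpha\in\bO$), which occupies the entirety of the paper's proof and which you only announce as ``the only real difficulty'' without carrying out. Since all three contact points converge to the same boundary point $\xi$, nothing prevents them from lying on $\bO$, and there the inequalities you invoke are not available as written; deferring this is a genuine gap, not a technicality.

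Concretely, three separate constructions are missing. For the Neumann problem one must exclude the boundary alternative in each of the three viscosity inequalities, i.e.\ show $\gamma\cdot D_xu<g$ at $x_\alpha$ and $\gamma\cdot D_yu>g$, $\gamma\cdot D_zv>g$ at $y_\alpha$, $z_\alpha$; the paper achieves this by replacing $|x-y|^2$ with $\tfrac12A(x-y)\cdot(x-y)$ for a positive definite symmetric $A$ with $A\gamma(\xi)=n(\xi)$ (as in \cite{LS}), and by inserting the corrections $g(\xi)d(\cdot)$ and the penalizations $-\al\, d(\cdot)$ with $d(x)=-\gamma(\xi)\cdot x/|\gamma(\xi)|^{2}$, whose signs make each scalar product with $\gamma$ strictly of the right sign once $\ep$ is small compared with $\al$. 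Your phrase about ``bending the penalization along $\gamma$'' names this idea but none of it is constructed or verified. For the state-constraint problem your one-line justification is backwards: it is the \emph{super}solution inequality that extends to $\bO$, whereas the \emph{sub}solution inequalities you need at $y_\alpha$ (for $u_c$) and at $z_\alpha$ (for $v_c$) hold only in $\Om$; the paper restores them by translating, $u^{a}(x,t)=u(x+ae_N,t)$ and $v^{a}(x)=v(x+ae_N)$, using the $C^{0}$ graph structure of $\bO$, and must then absorb the resulting error $\om_R(a)$ in $H$ before sending $a\to0$. For the Dirichlet problem one must show that the boundary alternative $u(\ol x,\ol t)\le g(\ol x)$ is not the active one, which the paper deduces from $\mu(\sig)<1$ together with Proposition \ref{prop:classic}. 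Until these three arguments are supplied, the proof is incomplete precisely where the lemma goes beyond \cite{BS}.
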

We first prove Proposition \ref{prop:mu-to-1} 
and Theorem \ref{thm:asymp-mono}
by using Lemma \ref{lem:mu}  
and we postpone to 
prove Lemma \ref{lem:mu} in the next section.

\begin{proof}[Proof of Proposition {\rm \ref{prop:mu-to-1}}]
Fix $\eta\in(0,\eta_{0}]$. 
We first notice that, by definition, 
$$\mu_{\eta}^{+}(s) \leq 1 \leq \mu_{\eta}^{-}(s)\; ,$$
for any $s\geq 0$. On the other hand, one checks easily that the functions
$$ 1 +  (\mu_{\eta}^{+}(0)-1) \exp(-\frac{\psi_{\eta}}{C}t)\; ,$$
and
$$ 1 +  (\mu_{\eta}^{-}(0)-1) \exp(-\frac{\psi_{\eta}}{C\mu_{\eta}^{-}(0)}t)\; ,$$
are, respectively, sub and supersolution of (\ref{variational-ineq}) or (\ref{variational-ineq2}) and they take the same value as $\mu_{\eta}^{+}$ and $\mu_{\eta}^{-}$ for $s=0$, respectively. Therefore, by comparison, we have for any $s>0$
$$ 1 +  (\mu_{\eta}^{+}(0)-1) \exp(-\frac{\psi_{\eta}}{C}t) \leq \mu_{\eta}^{+}(s)\; ,$$
and
$$\mu_{\eta}^{-}(s) \leq 1 +  (\mu_{\eta}^{-}(0)-1) \exp(-\frac{\psi_{\eta}}{C\mu_{\eta}^{-}(0)}t)\; ,$$
which imply the conclusion. 
\end{proof}

\begin{proof}[Proof of Theorem {\rm \ref{thm:asymp-mono}}]
We only prove (i), since we can prove (ii) similarly.  
For any $x\in\cO$ and $t,s\in[0,\infty)$ with $t\ge s$, 
we have 
\[
\mu_{\eta}^{+}(s)(u_{c}(x,s)-v_{c}(x))\le 
u_{c}(x,t)-v_{c}(x)+\eta(t-s),
\]
which implies 
\begin{align*}
u_{c}(x,s)-u_{c}(x,t)+\eta(s-t)
&\le\, (1-\mu_{\eta}^{+}(s))(u_{c}(x,s)-v_{c}(x))\\
{}&\le\, C(1-\mu_{\eta}^{+}(s))
=: \del_{\eta}(s). 
\end{align*}
Therefore we have the conclusion. 
\end{proof}

\section{Proof of Lemma {\rm \ref{lem:mu}}}
In this section 
we divide into three subsections and 
we prove Lemma {\rm \ref{lem:mu}} for 
three types boundary problems. 
We only prove (i), since we can prove (ii) similarly 
in any case.

\subsection{Neumann Problem}
In this subsection we normalize the additive eigenvalue $c_{n}$ to be $0$ 
by replacing $H$ by $H-c_{n}$, where $c_{n}$ is the constant given by 
\eqref{additive-const-neumann}.

\begin{proof}[Proof of Lemma {\rm \ref{lem:mu}}]
Fix $\mu\in(0,\eta_{0}]$ 
and let $\mu_{\eta}^{+}$ be the function given 
by \eqref{def:mu-plus}. 
By abuse of notation we write $\mu$ 
for $\mu_{\eta}^{+}$. 
We recall that $\mu(s)\leq 1$ for any $s\geq 0$.
 
Let $\phi\in C^{1}((0,\infty))$ and $\sig>0$ be a strict 
local minimum of $\mu-\phi$, 
i.e., 
$(\mu-\phi)(s)>(\mu-\phi)(\sig)$ 
for all $s\in[\sig-\del,\sig+\del]\setminus\{\sig\}$ and some $\del>0$.  
Since there is nothing to check $\mu(\sig)=1$, 
we assume that $\mu(\sig)<1$. 
We choose $\xi\in\cO$ and $\tau\ge\sig$ 
such that 
\[
\mu(\sig)=
\frac{u(\xi,\tau)-v(\xi)+\eta(\tau-\sig)}
{u(\xi,\sig)-v(\xi)}. 
\]

Since we can get the conclusion by the same argument 
as in \cite{BS} in the case where $\xi\in\Om$, 
we only consider the case where $\xi\in\bO$ in this proof.

Next, for $\al >0$ small enough, we consider the function 
\[
(x,t,s)\mapsto 
\frac{u(x,t)-v(x)+\eta(t-s)}
{u(x,s)-v(x)}+|x-\xi|^{2} + |t-\tau|^{2} -\phi(s) -3\al d(x)\; .
\]
We notice that, for $\al=0$, $(\xi,\tau, \sig)$ is a strict minimum point of this function. This implies that, for $\al>0$ small enough, this function achieves its minimum over $\cO\times\{(t,s)\mid t\ge s,\ s\in[\sig-\del,\sig+\del]\}$
at some point $(\xi_{\al}, t_{\al}, s_{\al})$ which converges to $(\xi,\tau, \sig)$ when $\al \to 0$. 
Then there are two cases : either (i) $\xi_{\al}\in\Om$ or (ii) $\xi_{\al}\in\bO$. We only consider case (ii) here too since, again, the conclusion follows by the same argument as in \cite{BS} in case (i).

In case (ii), since $d(\xi_{\al})=0$, the $\al$-term vanishes and we have $(\xi_{\al}, t_{\al}, s_{\al}) =(\xi, \tau, \sig)$ by the strict minimum point property.

Choose a positive definite symmetric matrix $A$  
which satisfies 
$A\gam(\xi)=n(\xi)$ (see \cite[Lemma 4.1]{LS}) and 
define the quadratic function $f:\cO\to\R$ by 
\begin{equation}\label{func-f}
f(x):=\frac{1}{2}Ax\cdot x. 
\end{equation}
For $\ep\in(0,1)$,
we set $K:=\cO^{3}\times\{(t,s)\mid t\ge s, 
s\in[\sig-\del,\sig+\del]\}$ and
define the function 
$\Psi:K\to\R$ by 
\begin{align*}
{}&\Psi(x,y,z,t,s)\\
:=&
\frac{u(x,t)-v(z)+\eta(t-s)
+g(\xi)(d(x)-d(z))}{u(y,s)-v(z)
+g(\xi)(d(y)-d(z))}-\phi(s)\\
{}&
+\frac{1}{\ep^{2}}(f(x-y)+f(x-z))+|x-\xi|^{2} + |t-\tau|^{2}
-\al(d(x)+d(y)+d(z)), 
\end{align*}
where 
\begin{equation}\label{func-d}
d(x):=-\frac{\gam(\xi)\cdot x}{|\gam(\xi)|^{2}}. 
\end{equation}
Let $\Psi$ achieve its minimum over 
$K$ at some $(\ol{x}, \ol{y}, \ol{z}, \ol{t}, \ol{s})$. 
Set 
\begin{align*}
\ol{\mu}_{1}&:=
u(\ol{y},\ol{s})-v(\ol{z})+g(\xi)
(d(\ol{y})-d(\ol{z})), \\
\ol{\mu}_{2}&:=
u(\ol{x},\ol{t})-v(\ol{z})+\eta(\ol{t}-\ol{s})+g(\xi)
(d(\ol{x})-d(\ol{z})), \\
\ol{\mu}&:=\frac{\ol{\mu}_{2}}{\ol{\mu}_{1}}. 
\end{align*}

Since $\cO$ is compact, by taking a subsequence if necessary, 
we may assume that 
\[\ol{x}, \ol{y}, \ol{z}\to \xi\ 
\textrm{and} \ 
\ol{t} \to \tau, \ 
\ol{s}\to\sig \ 
\textrm{as} \ \ep\to0. 
\]
Moreover, we have 
\begin{gather}
\ol{\mu}\to\mu(\sig) 
\ \textrm{as} \ \ep\to0, \label{pf-main:c1}\\
\frac{|\ol{x}-\ol{y}|}{\ep^{2}}
+\frac{|\ol{x}-\ol{z}|}{\ep^{2}}\le C_{1} 
\ \textrm{for some} \ C_{1}>0. \label{pf-main:bdd-lip}
\end{gather}
The second property is due to the Lipschitz continuity of 
$u$ and $v$. 
From \eqref{pf-main:c1}, we may assume that 
$\ol{\mu}<1$ for small $\ep>0$.

Derivating $\Psi$ with respect to each variable $x,t,y,s,z$ 
at $(\ol{x},\ol{y},\ol{z},\ol{t},\ol{s})$ formally,  we get 
\begin{align*}
D_{x}u(\ol{x},\ol{t})
&=\, 
\ol{\mu}_{1}\bigl(
\frac{1}{\ep^{2}}A(\ol{y}-\ol{x})+
\frac{1}{\ep^{2}}A(\ol{z}-\ol{x})
+2(\xi-\ol{x})+\al Dd(\ol{x})
\bigr)\\
{}&\hspace*{13pt}-g(\xi)Dd(\ol{x}),\\
u_{t}(\ol{x},\ol{t})&=\, -\eta -2 \ol{\mu}_{1}(\ol{t}-\tau), \\
D_{y}u(\ol{y},\ol{s})&=\, 
\frac{\ol{\mu}_{1}}{\ol{\mu}}\bigl(
\frac{1}{\ep^{2}}A(\ol{y}-\ol{x})-\al Dd(\ol{y})
\bigr)-g(\xi)Dd(\ol{y})\\
u_{s}(\ol{y},\ol{s})&=\, 
-\frac{1}{\ol{\mu}}(\eta+\ol{\mu}_{1}\phi^{'}(\ol{s})), \\
D_{z}v(\ol{z})&=\,
\frac{\ol{\mu}_{1}}{1-\ol{\mu}}\bigl(
\frac{1}{\ep^{2}}A(\ol{z}-\ol{x})-\al Dd(\ol{z})\bigr)
-g(\xi)Dd(\ol{z}). 
\end{align*}
We remark that 
we should interpret $D_{x}u, u_{t}, D_{y}u, u_{s}$ and $D_{z}v$ as 
the viscosity solution sense here.

We consider the case where $\ol{x}\in\bO$ and 
then we obtain 
\[
D_{x}u(\ol{x},\ol{t})\cdot \gam(\ol{x})
\le g(\ol{x})+m(\ep)-\al n(\xi)\cdot\gam(\xi)
\]
for some modulus of continuity $m$ by similar arguments in the proof of 
Theorem \ref{thm:comparison}.   
Therefore, if $\ep>0$ is suitable small compared to $\al>0$, 
we have 
\[
D_{x}u(\ol{x},\ol{t})\cdot \gam(\ol{x})<g(\ol{x}). 
\]
Similarly, if $\ol{y}\in\bO$ and $\ol{z}\in\bO$, then 
we have 
\begin{align*}
&D_{y}u(\ol{y},\ol{s})\cdot \gam(\ol{y})>g(\ol{y}) \quad \textrm{and}\\
&D_{z}v(\ol{z})\cdot \gam(\ol{z})>g(\ol{z}), 
\end{align*}
respectively, 
for $\ep>0$ which is small enough compared to $\al>0$.

Therefore, by the definition of viscosity solutions of (CN), 
we have 
\begin{align}
-\eta -2 \ol{\mu}_{1}(\ol{t}-\tau) +H(\ol{x}, D_{x}u(\ol{x},\ol{t}))\ge0, \label{pf-main:ineq1}\\
-\frac{1}{\ol{\mu}}(\eta+\ol{\mu}_{1}\phi^{'}(\ol{s}))+
H(\ol{y}, D_{y}u(\ol{y},\ol{s}))\le0, \label{pf-main:ineq2}\\
H(\ol{z}, D_{z}v(\ol{z}))\le0. \label{pf-main:ineq3}
\end{align}
In view of \eqref{pf-main:bdd-lip} 
we may assume that 
\[
\frac{1}{\ep^{2}}A(\ol{y}-\ol{x})\to p_{y}, \quad 
\frac{1}{\ep^{2}}A(\ol{z}-\ol{x})\to p_{z}
\]
as $\ep\to0$ for some $p_y, p_z\in\R^{N}$ 
by taking a subsequence if necessary. 
Set 
\begin{align*}
&\mu_{1}:=u(\xi,\sig)-v(\xi), \\
&P:=\frac{\mu_{1}}{\mu(\sig)}p_{y}, \\
&Q:=\frac{\mu_{1}}{1-\mu(\sig)}p_{z}, \\
&\tilde{P}:=\mu(\sig)(P-Q) \ 
\textrm{and}\\
&\tilde{Q}:=Q-g(\xi)Dd(\xi). 
\end{align*}
Sending $\ep\to0$ and then $\al\to0$ 
in \eqref{pf-main:ineq1} and \eqref{pf-main:ineq3} and recalling that $\ol{t} \to \tau$,
we obtain 
\begin{equation}\label{pf-main:ineq4}
H(\xi, \tilde{P}+\tilde{Q})\ge\eta  \ \textrm{and}\ \  H(\xi, \tilde{Q})\le0.
\end{equation}
Therefore, since we have 
$\tilde{P}/\mu(\sig)+\tilde{Q}=P-g(\xi)Dd(\xi)$, 
by using (A2)$_{0}^{+}$, we obtain 
\begin{equation}\label{pf-main:ineq5}
H(\xi, \tilde{P}+\tilde{Q})
\le\, 
\mu(\sig)
H(\xi, P-g(\xi)Dd(\xi))
-\psi_{\eta}(1-\mu(\sig)) 
\end{equation}
for some $\psi_{\eta}>0$. 

Sending $\ep\to0$ and then $\al\to0$ 
in \eqref{pf-main:ineq2}, 
we have 
\begin{equation}\label{pf-main:ineq6}
-\frac{1}{\mu(\sig)}
(\eta+\mu_{1}\phi^{'}(\sig))
+H(\xi, P-g(\xi)Dd(\xi))
\le0. 
\end{equation}
Therefore by \eqref{pf-main:ineq4}, \eqref{pf-main:ineq5} and 
\eqref{pf-main:ineq6} we obtain 
\[
\eta
\le
H(\xi, \tilde{P}+\tilde{Q})
\le
\eta+\mu_{1}\phi^{'}(\sig)+
\psi_{\eta}(\mu(\sig)-1),  
\]
which implies the conclusion. 
\end{proof}

\subsection{State Constraint Problem}
In this subsection 
we normalize the additive eigenvalue $c_{sc}$ is $0$ 
by replacing $H$ by $H-c_{sc}$, 
where $c_{sc}$ is the constant given by 
\eqref{additive-const-state}.

\begin{proof}[Proof of Lemma {\rm \ref{lem:mu}}]
Fix $\eta\in(0,\eta_{0}]$ 
and let $\mu_{\eta}^{+}$ be the function given by \eqref{def:mu-plus}. 
By abuse of notation we write $\mu$  
for $\mu_{\eta}^{+}$. 
Let $\mu-\phi$ take 
a strict local minimum at $\sig>0$ 
and for some $\phi\in C^{1}((0,\infty))$, 
i.e., 
$(\mu-\phi)(t)>(\mu-\phi)(\sig)$ 
for all $t\in[\sig-\del,\sig+\del]\setminus\{\sig\}$ 
and some $\del>0$.  
Since there is nothing to check 
in the case where $\mu(\sig)=1$, 
we assume that $\mu(\sig)<1$. 
We choose $\xi\in\cO$ and $\tau\ge\sig$ 
such that 
\[
\mu(\sig)=
\frac{u(\xi,\tau)-v(\xi)+\eta(\tau-\sig)}
{u(\xi,\sig)-v(\xi)}. 
\]
We only consider the case where $\xi\in\bO$.

Since $\Om$ is a domain with a $C^{0}$-boundary, 
after relabelling and re-orienting the coordinates axes 
if necessary, we may assume that 
$B(\xi,r)\cap \Om
=B(\xi,r)\cap\{(x',x_{N})\in \R^{N-1}\times\R\mid 
x_{N}>b(x')\}$ 
for some constant $r>0$ and 
some continuous function $b$ on $\R^{N-1}$. 
Replacing $r$ by a smaller positive number 
if necessary, 
we may assume that there is an $a_0>0$ such that 
if $0<a\le a_0$, 
then we have $B(\xi,r)\cap\cO\subset -a e_{N}+\Om$, 
where $e_{N}:=(0,...,0,1)\in\R^{N}$.  
Henceforth we assume that $0<a\le a_0$. 
We may choose a bounded open neighborhood $W_{a}$ of 
$\ol{B}(\xi,r)\cap\cO$ such that 
$\ol{W}_{a}\subset-a e_{N}+\Om$.

Set 
\[
u^{a}(x,t):=u(x+a e_{N},t) \ 
\textrm{and} \  
v^{a}(x):=v(x+a e_{N})
\]
for $x\in-a e_{N}+\Omega$ and $t\in[0,\infty)$. 
It is clear to see that 
\begin{gather*}
(u^{a})_{t}+H(x+ae_{N},Du^{a}(x,t))=0 
\ \textrm{in} \ W_{a}\times(\sig-\del,\sig+\del), \\
H(x+ae_{N},Dv^{a}(x,t))=0 
\ \textrm{in} \ W_{a} 
\end{gather*}
in the viscosity sense. 
We set 
\[
\mu_{a}(s):=\min_{x\in\cO, t\ge s}
\Bigr(\frac{u(x,t)-v^{a}(x)+\eta(t-s)}{u^{a}(x,s)-v^{a}(x)}\Bigr) 
\]
and let $\mu_{a}-\phi$ take a minimum 
at some $\sig_{a}\in[\sig-\del,\sig+\del]$ 
and then we have 
$\sig_{a}\to \sig$ and 
$\mu_{a}(\sig_{a})\to\mu(\sig)$ 
as $a\to0$. 
We choose $\xi_{a}\in\cO$ and $\tau_{a}\ge\sig_{a}$ 
such that 
\[
\mu_{a}(\sig)=
\frac{u(\xi_{a},\tau_{a})-v^{a}(\xi_{a})+\eta(\tau_{a}-\sig_{a})}
{u^{a}(\xi_{a},\sig_{a})-v^{a}(\xi_{a})}. 
\]

We set 
$K=(\ol{B}(\xi,r)\cap\cO)\times 
\ol{W}_{a}^{2}\times\{(t,s)\mid t\ge s, s\in[\sig-\del,\sig+\del]\}$ 
and  define the function 
$\Psi:K\to\R$ by 
\begin{align*}
\Psi(x,y,z,t,s)
:=&
\frac{u(x,t)-v^{a}(z)+\eta(t-s)}{u^{a}(y,s)-v^{a}(z)}\\
{}&
+\frac{1}{2\ep^{2}}(|x-y|^{2}+|x-z|^{2})+|x-\xi_{a}|^{2}
+|s-\sig_{a}|^{2}-\phi(s). 
\end{align*}
Let $\Psi$ achieve its minimum on $K$ 
at some $(\ol{x}, \ol{y}, \ol{z}, \ol{t}, \ol{s})$ and 
set 
\begin{align*}
\ol{\mu}_{1a}
&:=u^{a}(\ol{y},\ol{s})-v^{a}(\ol{z}), \\
\ol{\mu}_{2a}
&:=
u(\ol{x},\ol{t})-v^{a}(\ol{z})+
\eta(\ol{t}-\ol{s}), \\
\ol{\mu}_{a}
&:=\frac{\ol{\mu}_{2a}}{\ol{\mu}_{1a}}. 
\end{align*}
Since we may assume by taking a subsequence if necessary that 
\[
\ol{x}, \ol{y}, \ol{z}\to \xi_{a}, 
\ol{s}\to\sig_{a}, 
\ \textrm{and} \ 
\ol{\mu}_{a}\to\mu_{a}(\sig_{a}) 
\ \textrm{as} \ \ep\to0, 
\]
we have 
$\ol{y}, \ol{z}\in W_{a}$ if $\ep$ is small enough.

Therefore, by the definition of viscosity solutions 
we have 
\begin{align}
-\eta+H(\ol{x}, D_{x}u(\ol{x},\ol{t}))\ge0, \label{pf:mu-0}\\
-\frac{1}{\ol{\mu}_{a}}
\bigl(\eta+
\ol{\mu}_{1a}(\phi^{'}(\ol{s})-2(\ol{s}-\sig_{a}))
+H(\ol{y}+a e_{N}, D_{y}u^{a}(\ol{y},\ol{s}))\le0, \label{pf:mu-1}\\
H(\ol{z}+a e_{N}, D_{z}v^{a}(\ol{z}))\le0,  \label{pf:mu-2}
\end{align}
where 
\begin{align*}
D_{x}u(\ol{x},\ol{t})=& \, 
\ol{\mu}_{1a}
\Bigl(
\frac{\ol{y}-\ol{x}}{\ep^{2}}+
\frac{\ol{z}-\ol{x}}{\ep^{2}}+2(\xi_{a}-\ol{x})
\Bigr), \\
D_{y}u^{a}(\ol{y},\ol{s})=& \, 
\frac{\ol{\mu}_{1a}}{\ol{\mu}_{a}}
\cdot\frac{\ol{y}-\ol{x}}{\ep^{2}}, \\
D_{z}v^{a}(\ol{z})=& \, 
\frac{\ol{\mu}_{1a}}{1-\ol{\mu}_{a}}
\cdot
\frac{\ol{z}-\ol{x}}{\ep^{2}}. 
\end{align*}
In view of the Lipschitz continuity of $u^{a}$ on $\ol{W}_{a}$, 
we see that  
$\bigl\{\frac{\ol{y}-\ol{x}}{\ep^{2}}\bigr\}_{\ep}$, 
$\bigl\{\frac{\ol{z}-\ol{x}}{\ep^{2}}\bigr\}_{\ep}$ 
are bounded uniformly in $\ep>0$. 
Thus we may assume that 
\begin{align*}
&\frac{\ol{y}-\ol{x}}{\ep^{2}}\to p_{y}^{a}, \quad 
\frac{\ol{z}-\ol{x}}{\ep^{2}}\to p_{z}^{a}, \\
&\ol{\mu}_{1a}\to\mu_{1a}:=u^{a}(\xi_{a},\sig_{a})-v^{a}(\xi_{a}), \\
&\ol{\mu}_{a}\to\mu_{a}(\sig_{a})
\end{align*}
as $\ep\to0$ for some $p_y^{a}, p_z^{a}\in\R^{N}$ 
by taking a subsequence if necessary. 
Set 
\begin{align*}
&P_{a}:=\frac{\mu_{1a}}{\mu_{a}(\sig_{a})}p_{y}^{a}, \\
&Q_{a}:=\frac{\mu_{1a}}{1-\mu_{a}(\sig_{a})}p_{z}^{a} 
\quad\textrm{and}\\
&\tilde{P}_{a}:=\mu_{a}(\sig_{a})(P_{a}-Q_{a}). 
\end{align*}

Sending $\ep\to0$ in inequalities \eqref{pf:mu-0}, 
\eqref{pf:mu-1} and \eqref{pf:mu-2} yields 
\begin{align}
-\eta+H(\xi_{a}, \tilde{P}_{a}
+Q_{a})\ge0, \nonumber\\
-\frac{1}{\mu_{a}(\sig_{a})}(\eta+\mu_{1a}\phi^{'}(\sig_{a}))
+H(\xi_{a}+ae_{N}, P_{a})\le0, \label{pf:mu-3}\\
H(\xi_{a}+ae_{N}, Q_{a})\le0. \nonumber
\end{align}
Note that $|P_{a}|+|Q_{a}|\le R$ 
for some $R>0$ which is independent of $a$. 
There exists a modulus $\om_{R}$ such that 
\begin{equation}\label{pf:mu-4}
H(\xi_{a}+ae_{N}, \tilde{P}_{a}+Q_{a})
\ge 
\eta-\om_{R}(a). 
\end{equation}
For small $a>0$ we have 
\[
H(\xi_{a}+ae_{N}, \tilde{P}_{a}+Q_{a})
\ge\frac{\eta}{2}. 
\]
Since we have 
$\tilde{P}_{a}/\mu_{a}(\sig_{a})+\tilde{Q}_{a}=P_{a}$, 
by using (A2)$_{0}^{+}$, we obtain 
\begin{equation}\label{pf:mu-5}
\mu_{a}(\sig_{a})
H(\xi_{a}+ae_{N}, P_{a})
\ge 
H(\xi_{a}+ae_{N}, \tilde{P}_{a}+Q_{a})
+\psi_{\eta}(1-\mu_{a}(\sig_{a})) 
\end{equation}
for a constant $\psi_{\eta}>0$. 

By \eqref{pf:mu-4}, \eqref{pf:mu-5} and \eqref{pf:mu-3} 
we get 
\begin{align*}
\eta-\om_{R}(a)\le&\, 
H(\xi_{a}+ae_{N}, \tilde{P}_{a}+Q_{a})\\
{}\le&\, 
\mu_{a}(\sig_{a})
H(\xi_{a}+ae_{N}, P_{a})-\psi_{\eta}(1-\mu_{a}(\sig_{a}))\\
{}\le&\, 
\eta+\mu_{1a}\phi^{'}(\sig_{a})-\psi_{\eta}(1-\mu_{a}(\sig_{a})). 
\end{align*}
We divide by $\mu_{1a}>0$ and then we obtain 
\begin{align*}
0&\le \, 
\phi^{'}(\sig_{a})+
\frac{\psi_{\eta}}{\mu_{1a}}(\mu_{a}(\sig_{a})-1)
+\frac{\om_{R}(a)}{\mu_{1a}}\\
{}&\le \, 
\phi^{'}(\sig_{a})+
\frac{\psi_{\eta}}{C}(\mu_{a}(\sig_{a})-1)
+\om_{R}(a). 
\end{align*}
Sending $a\to0$ yields 
\[
\phi^{'}(\sig)+
\frac{\psi_{\eta}}{C}(\mu(\sig)-1)\ge0, 
\]
which is the conclusion. 
\end{proof}

\subsection{Dirichlet Problem}
Let $u$ be the solution of (CD) 
and $c_{sc}$ be the constant defined by \eqref{additive-const-state}.

We first treat the case where $c_{sc}>0$. 
Set $u_{c_{sc}}(x,t)=u(x,t)+c_{sc}t$ 
for $(x,t)\in \cQ$ and 
$g_{c_{sc}}(x,t)=g(x)+c_{sc}t$ for $(x,t)\in\bQ$. 
Since $u_{c_{sc}}$ is bounded on $\cQ$ by 
Proposition \ref{prop:bound} (iii-a) and 
$g_{c_{sc}}(x,t)\to\infty$ uniformly for $x\in\bO$ as $t\to\infty$, 
there exists a constant $\ol{t}>0$ such that $g_{c_{sc}}(x,t)>u_{c_{sc}}(x,t)$ 
for all $(x,t)\in(\ol{t},\,\infty)$. 
As $u_{c_{sc}}$ satisfies
\[\left\{
\begin{array}{ll}
(u_{c_{sc}})_t+H(x,Du_{c_{sc}})=c_{sc} \ \ &\mbox{ in } \Q,\\
\noalign{\vspace{5pt}}
u_{c_{sc}}(x,t)=g_{c_{sc}}(x,t) &\mbox{ on } \bO\times(0,\infty) 
\end{array}
\right.\]
in the viscosity sense, 
we see easily that $u_{c_{sc}}$ is a solution of 
\[\left\{
\begin{array}{ll}
(u_{c_{sc}})_t+H(x,Du_{c_{sc}})\le c_{sc} \ \ &\mbox{ in }\Om\times (\ol{t},\infty),\\
\noalign{\vspace{5pt}}
(u_{c_{sc}})_t+H(x, Du_{c_{sc}})\ge c_{sc} &\mbox{ on }\cO\times(\ol{t},\infty).  
\end{array}
\right.\]
Therefore the large-time asymptotic behavior of $u_{c_{sc}}$ 
is same as the large-time asymptotic behavior of solutions of (SC).

Next we consider the case where $c_{sc}\le0$. 
Let $v$ be a subsolution of (D)$_{0}$. 
Since $u$ is bounded on $\cQ$ by Proposition \ref{prop:bound} (iii-b) and 
$v-M$ are still subsolutions of (D)$_{0}$ for 
any $M>0$,  
by subtracting a positive constant to $v$ if necessary, 
we may assume that 
$u-v$ satisfies \eqref{bdd:u-v}.

Let $\mu_{\eta}^{+}$ be the function defined by \eqref{def:mu-plus} 
for $\eta\in(0,\eta_{0}]$. 
By abuse of notation we write $\mu$
for $\mu_{\eta}^{+}$. 
We prove that $\mu$ satisfies \eqref{variational-ineq}. 
We first notice that, in view of the coercivity of $H$, 
we have 
\begin{align}
&
u(x,t)\le g(x) \ \textrm{and} \label{pf:cd-1}\\
&
v(x)\le g(x) \nonumber
\quad\textrm{for all} \ (x,t)\in\bQ 
\end{align}
by Proposition \ref{prop:classic}.

Let $\ep>0$, $\xi\in\cO$, 
$\sig, \tau\in(0,\infty)$ 
and $(\ol{x},\ol{t})\in\cQ$ 
be the same as those in the proof of Lemma \ref{lem:mu} 
in the case of (SC). 
We only consider the case where $\mu(\tau)<1$ 
and $\xi\in\bO$. 
Since we have 
\[
0\le 
\mu(\tau)=
\frac{u(\xi,\tau)-v(\xi)+\eta(\tau-\sig)}
{u(\xi,\sig)-v(\xi)}
<1
\]
and $u(\xi,\sig)-v(\xi)>0$, we obtain 
\[
u(\xi,\tau)-v(\xi)+\eta(\tau-\sig)
<
u(\xi,\sig)-v(\xi)\le g(\xi)-v(\xi), 
\]
which implies 
$u(\xi,\tau)<g(\xi)$.  
Therefore 
we have 
$u(\ol{x},\ol{t})-g(\ol{x})<0$ 
for small $\ep>0$. 
The rest of the proof follows by the same method as in 
subsection 4.2.

\section{Convergence}

In this section, we prove Theorem \ref{thm:large-time} by using 
Theorem \ref{thm:asymp-mono}.

\begin{proof}[Proof of Theorem {\rm \ref{thm:large-time}}]
Let $c$ be the associated additive eigenvalue. 
When we consider (CD), 
let us set 
$c:=c_{sc}$ if $c_{sc}>0$ and 
$c:=0$ if $c_{sc}\le0$.

Since $\{u_{c}(\cdot,t)\}_{t\ge0}$ is compact in $\W(\Om)$, 
there exists a sequence $\{u_{c}(\cdot,T_{n})\}_{n\in\N}$ which 
converges uniformly on $\cO$ as $n\to\infty$. 
The maximum principle implies that we have 
\[
\|u_{c}(\cdot,T_{n}+\cdot)-u_{c}(\cdot,T_{m}+\cdot)\|_{L^{\infty}(\Q)}
\le 
\|u_{c}(\cdot,T_{n})-u_{c}(\cdot,T_{m})\|_{L^{\infty}(\Om)} 
\]
for any $n,m\in\N$. 
Therefore, $\{u_{c}(\cdot,T_{n}+\cdot)\}_{n\in\N}$ 
is a Cauchy sequence in $\BUC(\cQ)$ and it converges to a function denoted by 
$u_{c}^{\infty}\in\BUC(\cQ)$.

Fix any $x\in\cO$ and $s,t\in[0,\infty)$ with $t\ge s$.  
By Theorem \ref{thm:asymp-mono} we have 
\[
u_{c}(x,s+T_{n})-u_{c}(x,t+T_{n})+\eta(s-t)
\le \del_{\eta}(s+T_{n}) 
\]
or 
\[
u_{c}(x,t+T_{n})-u_{c}(x,s+T_{n})-\eta(t-s)
\le \del_{\eta}(s+T_{n}) 
\]
for any $n\in\N$ and $\eta>0$. 
Sending $n\to\infty$ and then $\eta\to0$, we get, for any $t\ge s$
\[
u_{c}^{\infty}(x,s)\le u_{c}^{\infty}(x,t). 
\]
or 
\[
u_{c}^{\infty}(x,t)\le u_{c}^{\infty}(x,s). 
\]
Therefore, we see that the functions $x\mapsto u_{c}^{\infty}(x,t)$ are uniformly bounded and equi-continuous which are also monotone in $t$. This implies that $u_{c}^{\infty}(x,t)\to w(x)$ uniformly on $\cO$ as $t\to\infty$
for some $w\in \W(\Om)$. Moreover, by a standard stability property of viscosity solutions, 
$w$ is a solution of either (E) or (D)$_{0}$.

Since $u_{c}(\cdot,T_{n}+\cdot)\to u_{c}^{\infty}$ uniformly in $\cQ$ 
as $n\to\infty$, we have 
\[
-o_n(1)+u_{c}^{\infty}(x,t)
\le 
u_{c}(x,T_{n}+t)
\le 
u_{c}^{\infty}(x,t)+o_n(1), 
\]
where $o_n(1)\to\infty$ as $n\to\infty$, uniformly in $x$ and $t$.
Taking the half-relaxed semi-limits as $t \to +\infty$, 
we get 
\[
-o_n(1)+v(x)
\le 
\limiinf_{t\to\infty}[u_{c}](x,t)
\le 
\limssup_{t\to\infty}[u_{c}](x,t)
\le 
v(x)+o_n(1). 
\]
Sending $n\to\infty$ yields 
\[
v(x)
=
\limiinf_{t\to\infty}[u_{c}](x,t)
=
\limssup_{t\to\infty}[u_{c}](x,t)
\]
for all $x\in\cO$. 
\end{proof}

\begin{rem}\label{rem:general}\ \\
(i) The Lipschitz regularity assumption 
on $u_{0}$ is convenient to avoid technicalities but it is not necessary. 
We can remove it as follows. 
We may choose a sequence 
$\{u_{0}^{k}\}_{k\in\N}\subset \W(\Om)\cap C(\cO)$ 
so that $\|u_{0}^{k}-u_{0}\|_{\Li(\Om)} \le 1/k$ 
for all $n\in\N$. 
By the maximum principle, we have 
\[
\|u-u_{k}\|_{\Li(\Q)}
\le 
\|u_{0}^{k}-u_{0}\|_{\Li(\Om)} \le 1/k 
\]
and therefore 
\[
u_{k}(x,t)-1/k\le u(x,t)\le u_{k}(x,t)+1/k 
\ \textrm{for all} \ (x,t)\in\cQ, 
\]
where $u$ is the solution of (IB) 
and $u_{k}$ are the solutions 
of (IB) with $u_{0}=u_{0}^{k}$. 
Therefore we have 
\[
u_{\infty}^{k}(x)-1/k
\le 
\limiinf_{t\to\infty}u(x,t)
\le 
\limssup_{t\to\infty}u(x,t)
\le 
u_{\infty}^{k}(x)+1/k
\]
for all $x\in\cO$, 
where $u_{\infty}^{k}(x)=\lim_{t\to\infty}u_{k}(x,t)$. 
Thus, 
$
|\limiinf_{t\to\infty}u(x,t)
-
\limssup_{t\to\infty}u(x,t)|\le2/k
$
for all $k\in\N$ and $x\in\cO$, which implies that 
\[
\limiinf_{t\to\infty}u(x,t)
=
\limssup_{t\to\infty}u(x,t)
\]
for all $x\in\cO$. \smallskip
We note that, by the same argument, we can obtain the asymptotic monotone 
property, Theorem \ref{thm:asymp-mono}, of solutions of (IB) 
without the Lipschitz continuity of solutions.

\noindent
(ii) 
We remark that modifying (A2)$_{a}^{\pm}$ as in \cite{BS}, 
we can generalize Theorem \ref{thm:large-time}. 
We use the following assumptions instead of (A2)$_{a}^{\pm}$. 
We denote $H-a$ by $H_{a}$. 
\begin{itemize}
\item[{\rm(A3)}$_{a}^{+}$] 
There exists a closed set $K\subset\cO$ 
($K$ is possibly empty) having the properties\\
\begin{itemize}
\item[{\rm (i)}] 
$\min_{p\in\R^{N}}H_{a}(x,p)=0$ for all $x\in K$, \\
\item[{\rm (ii)}] 
for each $\ep>0$ there exists a modulus $\psi_{\ep}(r)>0$ 
for all $r>0$ and $\eta_{0}^{\ep}>0$ such that 
for all $\eta\in(0,\eta_{0}^{\ep}]$ 
if $\dist(x,K)\ge\ep$, $H_{a}(x,p+q)\ge\eta$ and 
$H_{a}(x,q)\le0$ for some 
$x\in\cO$ and $p,q\in\R$, then for any $\mu\in(0,1]$, 
\[
\mu H_{a}(x,\frac{p}{\mu}+q)\ge 
H_{a}(x,p+q)+\psi_{\ep}(\eta)(1-\mu). 
\]
\end{itemize}
\item[{\rm(A3)}$_{a}^{-}$] 
There exists a closed set $K\subset\cO$ 
($K$ is possibly empty) having the properties\\
\begin{itemize}
\item[{\rm (i)}] 
$\min_{p\in\R^{N}}H_{a}(x,p)=0$ for all $x\in K$, \\
\item[{\rm (ii)}] 
for each $\ep>0$ there exists a modulus $\psi_{\ep}(r)>0$ 
for all $r>0$ and $\eta_{0}^{\ep}>0$ such that 
for all $\eta\in(0,\eta_{0}^{\ep}]$ 
if $\dist(x,K)\ge\ep$, $H_{a}(x,p+q)\le-\eta$ and 
$H_{a}(x,q)\ge0$ for some 
$x\in\cO$ and $p,q\in\R$, then for any $\mu\in(0,1]$, 
\[
\mu H_{a}(x,\frac{p}{\mu}+q)\le 
H_{a}(x,p+q)-\frac{\psi_{\ep}(\eta)(\mu-1)}{\mu}. 
\]
\end{itemize}
\end{itemize}

\begin{thm}\label{thm:large-time-general}
The results of Theorem {\rm \ref{thm:large-time}} still
hold if we replace assumptions  {\rm (A2)$_{c}^{+}$} or {\rm (A2)$_{c}^{+}$} by {\rm (A3)$_{c}^{+}$} or {\rm (A3)$_{c}^{+}$} where $c$ is defined as in Theorem {\rm \ref{thm:large-time}}. 
\end{thm}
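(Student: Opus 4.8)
The plan is to retrace the proof of Theorem~\ref{thm:large-time} and check that the only place where the precise form of (A2)$_c^\pm$ is used---namely the Key Lemma (Lemma~\ref{lem:mu})---goes through under the weaker assumptions (A3)$_c^\pm$, at the cost of introducing the exceptional set $K$. Since the convergence argument in Section~5 and the asymptotic-monotonicity machinery in Section~3 (Proposition~\ref{prop:mu-to-1}, Theorem~\ref{thm:asymp-mono}) depend on (A2)$_c^\pm$ \emph{only} through Lemma~\ref{lem:mu}, it suffices to prove the analogue of Lemma~\ref{lem:mu} with (A3)$_c^+$ in place of (A2)$_c^+$ (and similarly for the minus case). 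Concretely, I would show: under (A3)$_c^+$, for every $\ep>0$ there is a modulus-type constant governing the variational inequality on the region $\{\dist(x,K)\ge\ep\}$, and one shows $\mu_\eta^+(s)\to1$ by combining this with a separate, direct argument on the $\ep$-neighborhood of $K$.

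First I would set up the doubling-of-variables / test-function argument exactly as in Section~4, in each of the three boundary settings (Neumann, state constraint, Dirichlet), reducing as before to a point $\xi$ (in $\Om$ or on $\bO$) realizing the minimum defining $\mu_\eta^+(\sig)$, and producing the three viscosity inequalities of the form $H(\xi,\tilde P+\tilde Q)\ge\eta$, $H(\xi,\tilde Q)\le 0$, and the inequality coming from the $u_s$-derivative. The new step is a dichotomy on the location of $\xi$ relative to $K$. If $\dist(\xi,K)\ge\ep$, then (A3)$_c^+$(ii) applies with the constant $\psi_\ep(\eta)$ and the computation concludes exactly as in the original proof, yielding $\phi'(\sig)+\tfrac{\psi_\ep(\eta)}{C}(\mu(\sig)-1)\ge0$ at such points. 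If instead $\dist(\xi,K)<\ep$, one uses property (i) of (A3)$_c^+$: on $K$ one has $\min_p H_c(x,p)=0$, so on a neighborhood of $K$ the Hamiltonian is bounded below by $-\om_K(\ep)$ for a modulus $\om_K$; combined with the inequality $H(\xi,\tilde P+\tilde Q)\ge\eta$ this forces a relation that---after letting $\ep\to0$---is incompatible with $\eta>0$ unless $\mu(\sig)$ is already close to $1$. In other words, the minimum in the definition of $\mu_\eta^+$ cannot be asymptotically attained near $K$ once $\eta$ is fixed and $s$ is large. This is the standard ``the relevant dynamics avoids the equilibrium set'' phenomenon from \cite{BS}.

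To make the last point rigorous I would argue as in \cite{BS}: fix $\eta\in(0,\eta_0^\ep]$, and show that there exist $\ep_0>0$ and $s_0$ such that for $s\ge s_0$ any minimizing $(\xi,\tau)$ for $\mu_\eta^+(s)$ satisfies $\dist(\xi,K)\ge\ep_0$; this uses the coercivity (A1), the Lipschitz bound on $u_c$ and $v_c$, and the fact that near $K$ the function $u_c(\cdot,t)-v_c(\cdot)$ is ``almost'' a subsolution-minus-supersolution pair for $H_c$ with right-hand side $\le\om_K(\ep)$, so that an $\eta$-strict gradient jump cannot occur. Granting this localization, the viscosity inequality for $\mu_\eta^+$ becomes, for $s$ large, the same differential inequality \eqref{variational-ineq} as before with $\psi_\eta$ replaced by $\psi_{\ep_0}(\eta)>0$, and Proposition~\ref{prop:mu-to-1} follows from the same ODE comparison (with an irrelevant constant shift absorbed in $\del_\eta$). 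From there, Theorem~\ref{thm:asymp-mono} and then Theorem~\ref{thm:large-time} are unchanged, word for word, since they never re-open the Hamiltonian assumption. The minus case is symmetric, using (A3)$_c^-$ and \eqref{variational-ineq2}.

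The main obstacle I expect is the localization step: proving cleanly that minimizers of $\mu_\eta^+(s)$ stay away from $K$ for large $s$, uniformly in the doubling parameter $\ep$ used in Section~4, while simultaneously managing the \emph{boundary} test functions (the matrix $A$ with $A\gam(\xi)=n(\xi)$ in the Neumann case, the vertical translation $x\mapsto x+ae_N$ in the state-constraint and Dirichlet cases). One has to make sure that the error moduli $m(\ep)$, $\om_R(a)$, $\om_K(\ep)$ are ordered correctly and that the point $\xi$ around which one linearizes is the one that must avoid $K$---not $\ol x,\ol y,\ol z$, which only converge to it. This bookkeeping is routine but delicate, and it is exactly the part where one should mirror \cite{BS} rather than reinvent it. Everything else---Perron's method, the comparison principle, the ODE comparison, the half-relaxed limits---is already in place from the earlier sections and needs no change.
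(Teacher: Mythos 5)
The paper offers no proof of this theorem --- it is stated inside Remark~\ref{rem:general} with only the pointer ``modifying (A2)$_a^{\pm}$ as in \cite{BS}'' --- so your proposal must be judged on its own. Your reduction is correct: (A2)$_c^{\pm}$ enters the convergence proof only through Lemma~\ref{lem:mu}, and when the point $\xi$ realizing $\mu_\eta^+(\sig)$ satisfies $\dist(\xi,K)\ge\ep$, condition (ii) of (A3)$_c^+$ reproduces the supersolution inequality verbatim with $\psi_\ep(\eta)$ in place of $\psi_\eta$; the minus case is symmetric.

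The gap is in the near-$K$ case, and it is not mere bookkeeping. You claim that condition (i) of (A3)$_c^+$ (i.e.\ $\min_p H_c(x,p)=0$ on $K$) together with $H_c(\xi,\tilde{P}+\tilde{Q})\ge\eta$ ``forces a relation incompatible with $\eta>0$ unless $\mu(\sig)$ is close to $1$'', and later that near $K$ ``an $\eta$-strict gradient jump cannot occur''. Neither is true: $\min_p H_c(\xi,\cdot)\ge-\om(\ep)$ only bounds $H_c$ from below and is perfectly compatible with $H_c(\xi,\tilde{Q})\le0$ and $H_c(\xi,\tilde{P}+\tilde{Q})\ge\eta$ for any value of $\mu(\sig)$ --- take $H_c(x,p)=|p|^2-f(x)$ with $f=0$ on $K$: near $K$, $\tilde{Q}$ is small and $|\tilde{P}+\tilde{Q}|^2\ge\eta$, with no contradiction. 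So the localization step, which you rightly identify as the crux, has no supporting mechanism in your write-up. The missing ingredient is the Namah--Roquejoffre monotonicity that condition (i) is designed to produce: on $K$ one has $(u_c)_t=-H_c(x,Du_c)\le-\min_pH_c(x,\cdot)=0$, so $t\mapsto u_c(x,t)$ is nonincreasing and bounded, hence converges uniformly on $K$ (Dini plus equi-Lipschitz continuity); consequently the quotient in \eqref{def:mu-plus} is $\ge1-o_s(1)$ for $x\in K$, and $\ge 1-o_s(1)-O(\ep)$ for $\dist(x,K)<\ep$ by the Lipschitz bounds and \eqref{bdd:u-v}. It is this estimate --- not any pointwise property of the Hamiltonian --- that forces the minimizer to lie at distance $\ge\ep$ from $K$ whenever $\mu_\eta^+(s)$ stays bounded away from $1$ and $s$ is large, and one must still splice it into the ODE comparison (e.g.\ by applying Lemma~\ref{lem:mu} to the truncated function $\min\{\min_{\dist(x,K)\ge\ep,\,t\ge s}(\cdots),\,1\}$). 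This is precisely the mechanism of \cite{BS}, and it is also why the paper's Remark after Theorem~\ref{thm:NR-conv} observes that (A3)$_0^+$ holds with $K=\{f=0\}$ in the Namah--Roquejoffre setting. Without this monotonicity argument the proposal does not close.
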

\end{rem}

\section{Remarks on Convex Hamilton-Jacobi Equations}
In this section we deal with convex Hamilton-Jacobi equations, 
i.e., 
\begin{itemize}
\item[{\rm (A4)}] 
$p\mapsto H(x,p)$ is convex for any $x\in\cO$. 
\end{itemize}

\subsection{The Namah-Roquejoffre Case}

We use the following assumptions in this subsection. 
We consider the Hamiltonian $H(x,p) = F(x,p) - f(x)$, 
where $F$ and $f$ are assumed to satisfy 
 \begin{itemize}
\item[{\rm (A5)}] 
$F(x,p)\ge F(x,0)=0$
for all $(x,p)\in\cO\times\R^{N}$, 
\item[{\rm (A6)}] 
$f(x)\ge0$ for all $x\in\cO$ and 
$\cA_{f}:=\{x\in\cO\mid f(x)=0\}\not=\emptyset$. 
\item[{\rm (A7)}] 
$g(x)\ge0$ for all $x\in\bO$. 
\end{itemize}
A typical example of this Hamiltonian is 
$H(x,p)=|p|-f(x)$ with $f\ge0$ on $\cO$ 
and in this case, 
it is clearly seen that 
$H$ does not satisfy (A2)$_{0}^{+}$ and (A2)$_{0}^{-}$. 
Meanwhile, Hamiltonian has a simple structure 
and therefore in a relatively easy way we can obtain 
\begin{thm}\label{thm:NR-conv}
Assume that {\rm (BA)} and {\rm (A4)-(A6)} hold.\\ 
{\rm (i)} {\bf (Neumann/oblique derivative problem)} 
Assume that {\rm (A7)} holds. 
For the solution $u$ of {\rm (CN)}, 
\eqref{conv} holds with a solution $(v,0)$ of {\rm (E-N)}. \\
{\rm (ii)} {\bf (State constraint problem or Dirichlet problem)} 
For the solution $u$ of {\rm (SC)} or {\rm (CD)}, 
\eqref{conv} holds with a solution 
$(v,0)$ of {\rm (E-S)} or {\rm (D)}$_{0}$.
\end{thm}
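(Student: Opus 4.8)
The plan is to run the half-relaxed-limits scheme of Section~5, replacing the role of Section~4 (which is unavailable here since $H=F-f$ violates {\rm (A2)}$_0^{\pm}$) by two soft arguments special to the Namah--Roquejoffre structure.

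\textbf{Step 1: the ergodic constant is $0$, and set-up.} By {\rm (A5)} one has $H(x,0)=-f(x)\le0$ on $\cO$, so the constant $0$ is a subsolution of \eqref{erg-1}, of (E-N) (using {\rm (A7)} for the boundary inequality, which reads $-g\le0$), and of (D)$_a$ for every $a\ge0$ (the alternative $H-a\le0$ always holds at $\bO$); hence $c_{n}\le0$ and $c_{sc}\le0$. Conversely, any $W^{1,\infty}$ subsolution $v_a$ of \eqref{erg-1} with value $a$ satisfies $F(x,Dv_a)\le f(x)+a$ a.e.\ in $\Om$, which is incompatible with $F\ge0$ on the nonempty relatively open set $\{f<-a\}\cap\Om$ when $a<0$; since subsolutions of (E-N) are a fortiori subsolutions of \eqref{erg-1} in $\Om$, the representations \eqref{additive-const-neumann}, \eqref{additive-const-state} give $c_{n}=c_{sc}=0$. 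By Theorems~\ref{thm:existence}, \ref{thm:additive} and Proposition~\ref{prop:bound} the solution $u$ of (CN), (SC) or (CD) is bounded on $\cQ$ and there is a stationary solution of (E-N), (E-SC) or (D)$_0$ with $c=0$. Put $\ol u(x):=\limssup_{t\to\infty}[u](x,t)$ and $\ul u(x):=\limiinf_{t\to\infty}[u](x,t)$ on $\cO$; by standard stability these are, respectively, a bounded USC subsolution and a bounded LSC supersolution of the corresponding stationary problem with $c=0$, and $\ul u\le\ol u$. Proving $\ol u\le\ul u$ on $\cO$ finishes the proof, for then $v:=\ol u=\ul u$ is continuous, solves the stationary problem, and---the $u(\cdot,t)$ being equi-Lipschitz in $x$---the coincidence of the half-relaxed limits upgrades to \eqref{conv}.

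\textbf{Step 2: monotonicity on $\cA_{f}$.} Fix $x_0\in\cA_{f}$ and $s\ge0$. Since $H=F-f$ is coercive and $f$ bounded, choose $\Lambda$ so large that $F(x,p)\ge f(x)$ for all $x\in\cO$ and $|p|\ge\Lambda$, and also $\Lambda\ge\mathrm{Lip}\,u(\cdot,s)$. The time-independent function $\zeta(x):=u(x_0,s)+\Lambda|x-x_0|$ is then a viscosity supersolution of $H(x,D\zeta)\ge0$ on $\cO$: at $x\ne x_0$ this uses $|D\zeta|=\Lambda$, and at $x_0$ it uses {\rm (A5)} (test gradients there have norm $\le\Lambda$ and $F(x_0,\cdot)\ge F(x_0,0)=0=f(x_0)$). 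Because $H(x,D\zeta)\ge0$ \emph{everywhere}, $\zeta$ is in fact a supersolution of the stationary version of (CN), (SC) and (CD) (for (CD) the $H$-alternative in the boundary test is automatically satisfied). As $\zeta\ge u(\cdot,s)$ on $\cO$ by the Lipschitz bound, the comparison principle for (IB) (Theorem~\ref{thm:comparison}), applied on $\cO\times[s,\infty)$, gives $u(x,t)\le\zeta(x)$ for all $t\ge s$; at $x=x_0$ this is $u(x_0,t)\le u(x_0,s)$. Hence $t\mapsto u(x_0,t)$ is nonincreasing and, being bounded, converges to some $w_0(x_0)$; using once more the $x$-equi-Lipschitz bound, the half-relaxed limits at $x_0\in\cA_{f}$ are squeezed to this value, $\ol u(x_0)=\ul u(x_0)=w_0(x_0)$.

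\textbf{Step 3: propagation off $\cA_{f}$.} On $\Om\setminus\cA_{f}$ the constant $0$ is a \emph{strict} subsolution since $H(x,0)=-f(x)<0$ there. Fix a constant $M$ (with $M\le\min_{\bO}g$ in the Dirichlet case) and, for $\lam\in(0,1)$, set $\ol u_\lam:=(1-\lam)\ol u+\lam M$. By the convexity {\rm (A4)}, at any point of $\Om\setminus\cA_{f}$ one has, in the viscosity sense, $H(x,D\ol u_\lam)\le(1-\lam)H(x,D\ol u)+\lam H(x,0)\le-\lam f(x)<0$, so $\ol u_\lam$ is a strict subsolution of the interior equation there, and it is still a subsolution of the boundary condition ((CN): $(1-\lam)\gam\cdot D\ol u-g\le-\lam g\le0$ by {\rm (A7)}; (CD): $\ol u_\lam\le(1-\lam)g+\lam M\le g$; (SC): nothing to check). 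Comparing $\ol u_\lam$ with the supersolution $\ul u$ on $\cO$: a maximum point of $\ol u_\lam-\ul u$ cannot lie in $\Om\setminus\cA_{f}$ (the usual doubling-of-variables argument, legitimate here by coercivity, contradicts strictness); on $\cA_{f}$, $\ol u_\lam-\ul u=\lam(M-w_0)\le\lam C_0$ for a fixed $C_0$; and a boundary maximum is excluded by the arguments already underlying Theorems~\ref{thm:existence} and \ref{thm:additive} (the $A\gam=n$ oblique-derivative device for (CN), the inner-shift $-ae_N+\Om$ device of Subsection~4.2 for (SC), and $u\le g$ on $\bO$, from Proposition~\ref{prop:classic}, for (CD)). Therefore $\sup_{\cO}(\ol u_\lam-\ul u)\le\lam C_0$, and $\lam\to0$ yields $\ol u\le\ul u$; with $\ul u\le\ol u$ this gives $\ol u=\ul u=:v$, a solution of (E-N), (E-SC) or (D)$_0$ with $c=0$.

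\textbf{Main obstacle.} The only genuinely delicate point is Step~3: the comparison must be run on $\Om\setminus\cA_{f}$ with $\cA_{f}$ playing the role of an \emph{internal} Dirichlet boundary, while $\bO$ itself carries only $C^{0}$ (or, for (CN), $C^{1}$) regularity and an oblique reflection; thus the doubling has to be localized away from $\cA_{f}$ and simultaneously handle the true boundary, and a related care is needed to make Step~2 rigorous when $\cA_{f}$ meets $\bO$. Both are dealt with by the machinery already developed for Theorems~\ref{thm:existence}, \ref{thm:comparison} and \ref{thm:asymp-mono}.
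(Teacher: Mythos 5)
Your proposal follows the same three-step strategy as the paper's (sketched) proof: show $c_{n}=c_{sc}=0$ from (A5)--(A7), obtain monotonicity of $t\mapsto u(x,t)$ on $\cA_{f}$ (the paper defers the rigorous version to \cite[Lemma 2.4]{NR} and \cite[Lemma 4.4]{GLM1}), and then propagate the coincidence of the half-relaxed limits off $\cA_{f}$ using the strict subsolution $0$ there (the paper defers this to \cite[Theorem 6.6]{I2}, \cite[Theorem 7.3]{IM} and \cite[Theorem 5.3]{M2}). Your Steps 1 and 3 are faithful, self-contained renditions of exactly those ingredients.

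One concrete point needs repair in Step 2. For (SC) and (CD) the domain is only $C^{0}$, and --- as the paper itself warns in the proof of Theorem \ref{thm:additive}(ii) --- solutions are in $\W(\Om)$ but need \emph{not} be Lipschitz up to the boundary with respect to the Euclidean distance: the interior gradient bound only yields $|u(x,s)-u(x_0,s)|\le M\,d_{\Om}(x,x_0)$ with $d_{\Om}$ the geodesic distance in $\Om$, which for a merely continuous graph boundary is not comparable to $|x-x_0|$ with a uniform constant. Consequently the cone $\zeta(x)=u(x_0,s)+\Lambda|x-x_0|$ need not dominate $u(\cdot,s)$ on $\cO$ for any finite $\Lambda$, and this failure occurs even for interior $x_0\in\cA_{f}$, not only when $\cA_{f}$ meets $\bO$ as you suggest. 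The fix is standard: replace $|x-x_0|$ by $d_{\Om}(x,x_0)$ (or argue as in \cite{NR, GLM1}), checking that $\Lambda d_{\Om}(\cdot,x_0)$ is still a viscosity supersolution of $H\ge 0$ --- its test gradients have norm $\Lambda$ away from $x_0$ and norm at most $\Lambda$ at $x_0$, where (A5) and $f(x_0)=0$ apply. With that modification (and your own caveat about localizing the doubling in Step 3 away from $\cA_{f}$, which is precisely what the cited comparison theorems carry out), the argument is complete and matches the paper's.
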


\begin{proof}[Sketch of Proof] 
It is easy to see that 
\[
c_{n}=0 \ \textrm{and} \ 
c_{sc}=0. 
\]
Indeed, on one hand, in view of (A5)-(A7), 
any constant is a subsolution of (E-N) and (E-SC) 
with $a=0$, which implies that 
$c_{n}, c_{sc}\le0$. 
On the other hand,
if we assume $c_{n}<0$ or $c_{sc}<0$, 
then (A5), (A6) yields a contradiction since 
$H(x,p)=F(x,p)-f(x)\ge0$ for all $x\in\cA_{f}$.

We have $u_{t}=-F(x,Du)+f(x)\le0$ in 
$\cA_{f}\times(0,\infty)$ 
and therefore 
we can get the monotonicity of 
the function $t\mapsto u(x,t)$
at least formally 
(see \cite[Lemma 2.4]{NR} and \cite[Lemma 4.4]{GLM1} 
for a rigorous proof). 
Thus, taking into account the uniform Lipschitz continuity of $u$, we have 
$\limiinf_{t\to\infty}u(x,t)
=
\limssup_{t\to\infty}u(x,t)$ 
for any $x\in\cA_{f}$. 
By \cite[Theorem 6.6]{I2} for Neumann problems, 
\cite[Theorem 7.3]{IM} for state constraint problems and 
\cite[Theorem 5.3]{M2}, we obtain 
$\limiinf_{t\to\infty}u(x,t)
=
\limssup_{t\to\infty}u(x,t)$ 
for any $x\in\cO$, since, outside $\cA_{f}$, one has a strict subsolution, 
which is a key tool to obtain the comparison of the half-relaxed limits. 
\end{proof}

\begin{rem}
We remark that the result of Theorem \ref{thm:NR-conv} 
is included in Theorem \ref{thm:large-time-general}, 
since {\rm (A3)}$_{0}^{+}$ holds with 
$K=\{x\in\R^{N}\mid f(x)=0\}$. 
\end{rem}

\subsection{Asymptotic Profile}

In this subsection we give representation formulas for the
asymptotic solutions in each case. 
We define the functions $\phi^{-}, \phi^{\infty}\in C(\cO)$ 
by 
\begin{align*}
\phi^{-}(x)&:=\sup\{v(x)\mid v\in C(\cO) \ 
\textrm{is a subsolution of} \ {\rm (E)}, \ 
v\le u_{0} \ \textrm{on} \ \cO\}, \\
\phi^{\infty}(x)&:=\inf\{v(x)\mid v\in C(\cO) \ 
\textrm{is a solution of} \ {\rm (E)}, \ 
v\ge \phi^{-} \ \textrm{on} \ \cO\}.  
\end{align*}
We denote $\phi^{-}$ associated 
with (E-N) and (E-SC) by $\phi^{-}_{n}$ and $\phi^{-}_{s}$, 
respectively and 
$\phi^{\infty}$ associated 
with (E-N) and (E-SC) by $\phi^{\infty}_{n}$ and $\phi^{\infty}_{s}$, 
respectively. 
In view of (A4) 
we see that 
$\phi^{\infty}_{n}$ and 
$\phi^{\infty}_{s}$ 
are a solution of (E-N), (E-SC), respectively. 
We refer to the articles \cite{BJ, IM, I2} for 
a stability result under infimum operation.  
When $c_{sc}\le0$, 
we define the functions $\phi_{d}^{-}, \phi^{\infty}_{d}
\in C(\cO)$ by 
\begin{align*}
\phi_{d}^{-}(x)&:=\sup\{v(x)\in C(\cO)\mid 
v\in C(\cO) \ 
\textrm{is a subsolution of} \ {\rm (D)_{0}}\}, \\
\phi^{\infty}_{d}(x)
&:=\inf\{v(x)\mid v\in C(\cO) \ 
\textrm{is a solution of} \ {\rm (D)_{0}}, \ 
v\ge \phi^{-}_{s}\land \phi_{d}^{-}\ \textrm{on} \ \cO\}. 
\end{align*}

\begin{thm}[Asymptotic Profile]\label{thm:profile}
Assume that {\rm (BA)} and {\rm (A4)} hold.\\ 
{\rm (i)} 
{\bf (Neumann/oblique derivative problem)} 
Let $u$ be the solution of {\rm (CN)} and then we have 
\begin{equation}\label{profile-1}
\lim_{t\to\infty}(u(x,t)+c_{n}t) 
=\phi^{\infty}_{n}(x)
\quad\textrm{uniformly for all} \ 
x\in\cO.  
\end{equation}
{\rm (ii)} 
{\bf (State constraint problem)} 
Let $u$ be the solution of {\rm (SC)} and then we have 
\begin{equation}\label{profile-2}
\lim_{t\to\infty}(u(x,t)+c_{sc}t) 
=\phi^{\infty}_{s}(x)
\quad\textrm{uniformly for all} \ 
x\in\cO.  
\end{equation}
{\rm (iii)} 
{\bf (Dirichlet problem)} 
Let $u$ be the solution of {\rm (CD)}. \\
{\rm (iii-a)} 
If $c_{sc}>0$, then 
\begin{equation}\label{profile-3}
\lim_{t\to\infty}(u(x,t)+c_{sc}t)=\phi^{\infty}_{s}(x) 
\quad\textrm{uniformly for all} \ 
x\in\cO. 
\end{equation}
{\rm (iii-b)} 
If $c_{sc}=0$, then 
\begin{equation}\label{profile-4}
\lim_{t\to\infty}u(x,t)=\phi^{\infty}_{d}(x) 
\quad\textrm{uniformly for all} \ 
x\in\cO. 
\end{equation}
{\rm (iii-c)} 
If $c_{sc}<0$, then 
\[
\lim_{t\to\infty}u(x,t)=\phi_{d}^{-}(x) 
\quad\textrm{uniformly for all} \ 
x\in\cO. 
\]
\end{thm}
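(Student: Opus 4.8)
The plan is to identify $w := \lim_{t\to\infty} u_c(\cdot,t)$ (with $c$ the additive eigenvalue attached to the problem, exactly as in the statement) by squeezing it between a lower and an upper barrier. The whole point is the identification, since the mere convergence $u_c(\cdot,t)\to w$ in $C(\cO)$ towards a solution of (E) (resp.\ $(D)_0$) is already available in the convex case --- either from \cite{I2, IM, M2}, whose arguments extend to the present weak-regularity setting exactly as in the proof of Theorem~\ref{thm:NR-conv}, or, when it applies, from Theorem~\ref{thm:large-time-general}. I treat (i) and (ii) first; the Dirichlet cases then reduce to these or to minor variants.

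\emph{Lower bound: $w\geq\phi^\infty$.} By definition $\phi^-$ is a subsolution of (E), i.e.\ $H(x,D\phi^-)\leq c$ in $\Om$ together with the appropriate boundary inequality, and $\phi^-\leq u_0$ on $\cO$. Hence $(x,t)\mapsto\phi^-(x)-ct$ is a subsolution of (IB) whose initial datum lies below $u_0$, and the comparison principle (Theorem~\ref{thm:comparison}) yields $\phi^-(x)-ct\leq u(x,t)$, that is $\phi^-\leq u_c(\cdot,t)$ on $\cO$ for every $t\geq 0$. Letting $t\to\infty$ gives $\phi^-\leq w$. Since $w$ is itself a solution of (E) lying above $\phi^-$, it belongs to the family over which the infimum defining $\phi^\infty$ is taken, whence $w\geq\phi^\infty$.

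\emph{Upper bound: $w\leq\phi^\infty$.} This is the core step, and the one genuinely using convexity (A4) --- which is also what guarantees, via stability of solutions under infima, that $\phi^\infty$ is a solution in the first place. One cannot compare directly, since in general $\phi^\infty\not\geq u_0$. Instead I would show that $\limsup_{t\to\infty}u_c(x,t)\leq v(x)$ for every solution $v$ of (E) with $v\geq\phi^-$, and then pass to the infimum over such $v$. The mechanism is the optimal-control (Lax--Oleinik) representation of $u_c$ combined with the calibration property of $v$ along its backward characteristics and the maximality of $\phi^-$ among subsolutions below $u_0$: a curve calibrated for $v$ and issued from $x$ spends, for large time, essentially all of its length near the Aubry set of (E), where the values of $u_0$ --- hence of $\phi^-$, hence of $v$ --- are matched; transporting the estimate back to $x$ along the calibrated curve gives the bound. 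Equivalently one may argue at the PDE level as in \cite{M1, M2, I3}: away from the Aubry set a strict subsolution of (E) is available, which unlocks a comparison of half-relaxed limits, while on the Aubry set the solution $v$ itself serves as the barrier. In the Neumann/oblique and state-constraint problems the calibrated curves must be followed up to $\bO$, where the boundary condition satisfied by $v$ in the viscosity sense is invoked; this is the only place where the $C^1$ (resp.\ $C^0$) regularity of $\bO$ and the mere continuity of $\gamma$ intervene, and it is handled by the same devices as in the comparison proofs used throughout the paper.

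\emph{The Dirichlet cases and the main obstacle.} Case (iii-a) needs nothing new: it was shown in Section~4.3 that after the shift $u_{c_{sc}}$ obeys the same one-sided inequalities as a solution of (SC), so \eqref{profile-3} follows from (ii). For (iii-b) one runs the same sandwich for $(D)_0$ with the lower barrier $\phi^-_s\wedge\phi^-_d$: this is a subsolution of $(D)_0$ (a minimum of subsolutions is one), it is $\leq u_0$ on $\cO$ because $\phi^-_s\leq u_0$, and $\leq g$ on $\bO$ because $\phi^-_d\leq g$ there by Proposition~\ref{prop:classic}; hence $\phi^-_s\wedge\phi^-_d\leq u(\cdot,t)$, so $w\geq\phi^\infty_d$, while the upper bound proceeds as above with $(D)_0$ in place of (E). For (iii-c), where $c_{sc}<0=c$, the extra ingredient is that in this ``supercritical'' regime the maximal subsolution $\phi^-_d$ of $(D)_0$ is itself a solution and is the largest one (cf.\ \cite{M2, M3}); the lower bound then pushes the barrier $\phi^-_s\wedge\phi^-_d$ up to $\phi^-_d$, and since every solution of $(D)_0$ lies below $\phi^-_d$, the limit is pinned at $\phi^-_d$. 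The serious difficulty in all of this is the upper bound: establishing $\limsup_{t\to\infty}u_c\leq v$ for solutions $v\geq\phi^-$ while tracking calibrated curves that may reach the boundary under only $C^0$/$C^1$ regularity of $\bO$ and continuous $\gamma$; everything else is comparison-principle bookkeeping or an appeal to the existing convex large-time theory.
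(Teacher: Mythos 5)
Your lower bound, and your reductions for the Dirichlet cases (iii-a)--(iii-c), coincide with the paper's argument. The upper bound is where you diverge, and the paper's route is considerably lighter than the one you sketch. Instead of proving $\limsup_{t\to\infty}u_c\le v$ for every solution $v\ge\phi^-$ of (E) via calibrated curves or an Aubry-set comparison of half-relaxed limits, the paper introduces $v_c(x,t):=\inf_{s\ge t}u_c(x,s)$. By convexity (A4) and stability of subsolutions under infima, $v_c$ still solves the evolution equation; since $v_c(x,\cdot)$ is nondecreasing, $(v_c)_t\ge0$ in the viscosity sense, so $H(x,Dv_c(\cdot,0))\le c$, i.e.\ $v_c(\cdot,0)$ is a subsolution of the stationary problem with $v_c(\cdot,0)\le u_0$; by maximality of $\phi^-$ this forces $v_c(\cdot,0)\le\phi^-$, and your own lower bound gives the reverse inequality, so $v_c(\cdot,0)=\phi^-\le\phi^\infty$. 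One application of the parabolic comparison principle between $v_c$ and the stationary solution $\phi^\infty$ then yields $v_c(x,t)\le\phi^\infty(x)$ for all $t$, and since $\lim_{t\to\infty}v_c(x,t)=\liminf_{t\to\infty}u_c(x,t)=u_\infty(x)$ (the convergence being granted, as you note), the upper bound follows. What the paper's trick buys is exactly the avoidance of the two delicate points you flag yourself: no optimal-control representation is needed (problematic for merely continuous $\gamma$ and $C^0$ boundaries), and no Aubry-set analysis enters the proof of the theorem itself (the paper only invokes the Aubry-set comparison results of \cite{I2,IM,M2} later, for the explicit formulas of Proposition 6.3). Your approach is the standard generalized-dynamical one of \cite{DS,M1,M2,I3} and would presumably work, but it reproves machinery the paper deliberately circumvents, and the step ``$\limsup u_c\le v$ on the Aubry set'' would still require an argument (monotonicity of $u_c$ on the projected Aubry set, or the calibration estimate) that you only gesture at.
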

\begin{proof}
We prove (i), (ii), (iii-a) and (iii-b) 
at the same time in order to avoid duplication 
of explanations. 
We denote $\phi^{-}_{n}$, $\phi^{-}_{s}$ 
and $\phi^{-}_{s}\land\phi_{d}^{-}$ 
by $\phi^{-}$, and 
$\phi^{\infty}_{s}$, $\phi^{\infty}_{n}$ and $\phi^{\infty}_{d}$ by 
$\phi^{\infty}$ 
in any case. 
We write $u_{\infty}(x)$ for the left hand side of 
\eqref{profile-1}, \eqref{profile-2}, 
\eqref{profile-3} and \eqref{profile-4} 
in any case. 
Let $c$ denote $c_{n}$ and $c_{sc}$ in any case 
and set $u_{c}:=u+ct$ on $\cQ$.

Since $\phi^{-}$ is a subsolution of (E-N), (E-SC) 
or (D)$_{c}$, respectively, 
$\phi^{-}$ is a subsolution of (IB) too. 
By the comparison principle for 
(IB) (cf. Theorem \ref{thm:comparison}) 
we have 
\begin{equation}\label{phi-ineq}
\phi^{-}(x)\le u_{c}(x,t)
\quad\textrm{for all} \ 
(x,s)\in\cQ.  
\end{equation}
Therefore, we get 
\[\phi^{-}(x)\le u_{\infty}(x)
\quad\textrm{for all} \ 
x\in\cO. 
\]
Note that $u_{\infty}$ is a solution of (E-N), (E-SC) or 
(D)$_{c}$. 
By the definition of $\phi^{\infty}$ 
we get $\phi^{\infty}\le u_{\infty}$ on $\cO$.

We define the functions $v_{c}\in\BUC(\cQ)$ 
by $v_{c}(x,t):=\inf_{s\ge t}u_{c}(x,s)$. 
By \eqref{phi-ineq} we have particularly 
$\phi^{-}\le v_{c}(\cdot,0)$ on $\cO$. 
Note that $v_{c}(\cdot,0)$ is a subsolution of (E-N), (E-SC) or 
(D)$_{c}$ and 
$v_{c}(\cdot,0)\le u_{0}$ on $\cO$. 
Indeed, 
$v_{c}$ satisfies $(v_{c})_t+H(x,Dv_{c}(x,t))=0$ in $\Q$ 
(see \cite{BJ, IM, I2}) and 
it is clear that $v_{c}(x,\cdot)$ is non-decreasing, 
from which we have 
$(v_{c})_t(x,t)\ge0$ in $\Q$ in the viscosity sense. 
Therefore $H(x,Dv_{c}(x,t))\le 0$ in $\Om$ for any $t\ge0$. 
Since $v_{c}(x,t)\to v_{c}(x,0)$ uniformly on $\cO$ as 
$t\to 0$, 
we see that $H(x,Dv_{c}(x,0))\le 0$ in $\Om$ in the viscosity sense. 
It is easily seen that 
$v_{c}(\cdot,0)\le u_{0}$ on $\cO$. 
Thus by the definition of $\phi^{-}$, 
we have $v_{c}(\cdot,0)\le \phi^{-}$ on $\cO$. 
Therefore we obtain $v_{c}(\cdot,0)=\phi^{-}$ on $\cO$.

Note that $\phi^{\infty}$ is a solution of \eqref{ib-1}, \eqref{ib-2} and 
satisfies that $\phi^{\infty}(x)\ge\phi^{-}(x)=v_{c}(x,0)$ on $\cO$. 
By the comparison principle for (IB)
(cf. Theorem \ref{thm:comparison}, again) 
we have for all $x\in\cO$,  
\[u_\infty(x)=\lim_{t\to\infty}u_{c}(x,t)
=\liminf_{t\to\infty}u_{c}(x,t)
=\lim_{t\to\infty}v_{c}(x,t)\le\phi^{\infty}(x),\]
which establishes formulas. 

We finally consider (iii-c). 
In this case  
Problem (D)$_0$ has the unique solution. 
Since $u_{\infty}$ and $\phi_{d}$ are  
solutions of (D)$_0$, we see that $u_{\infty}=\phi_{d}$ 
on $\cO$. 
\end{proof}

We finally give another formulas for 
$\phi^{-}_{n}, \phi^{-}_{s}, \phi_{d}^{-}$, 
$\phi^{\infty}_{n}, \phi^{\infty}_{s}$ 
and $\phi^{\infty}_{d}$. 
\begin{prop}
Assume that {\rm (BA)} and {\rm (A4)} hold.
We have 
\begin{align}
&\phi^{-}_{n}(x)=
\min\{d_{n}(x,y)+u_{0}(y)\mid y\in\cO\}, \label{ano-1}\\
&\phi^{\infty}_{n}(x)=
\min\{d_{n}(x,y)+\phi^{-}_{n}(y)\mid y\in\cA_{n}\}, \label{ano-2}\\
&\phi^{-}_{s}(x)=
\min\{d_{s}(x,y)+u_{0}(y)\mid y\in\cO\}, \label{ano-3}\\
&\phi^{\infty}_{s}(x)=
\min\{d_{s}(x,y)+\phi^{-}_{s}(y)\mid y\in\cA_{s}\}, \label{ano-4}\\
&\phi_{d}^{-}(x)=
\min\{d_{0}(x,y)+g(y)\mid y\in\bO\}, \label{ano-5}\\
&\phi_{d}^{\infty}(x)=
\min\{d_{0}(x,y)+\phi_{s}^{-}(y)\land\phi_{d}^{-}(y)\mid y\in\cA_{s}\} 
\label{ano-6}
\end{align}
for any $x\in\cO$, 
where 
\begin{align*}
&d_{n}(x,y):=
\sup\{v(x)-v(y)\mid v \ \textrm{is a subsolution of} 
\ \eqref{erg-1}, \eqref{erg-2} \ \textrm{with} \ a=c_{n}\},  \\
&d_{s}(x,y):=
\sup\{v(x)-v(y)\mid v \ \textrm{is a subsolution of} \ 
\eqref{erg-1} \ \textrm{with} \ a=c_{sc}\}, \\
&d_{0}(x,y):=
\sup\{v(x)-v(y)\mid v \ \textrm{is a subsolution of} \ \eqref{d-1} 
\ \textrm{with} \ a=0\}, \\
&\cA_{n}:=
\{y\in\cO\mid d_{n}(\cdot,y) \ \textrm{is a solution of} 
\ {\rm (E}\textrm{-}{\rm N)} \}, \\
&\cA_{s}:=
\{y\in\cO\mid d_{s}(\cdot,y) \ \textrm{is a solution of} 
\ {\rm (E}\textrm{-}{\rm SC)} \}.  
\end{align*}
\end{prop}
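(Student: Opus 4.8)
The plan is to establish the six representation formulas by recognizing that each of the functions $\phi^{-}$ and $\phi^{\infty}$ is characterized by a sup/inf over sub/supersolutions, and that such extremal functions admit optimal-control-type (metric) representations via the intrinsic distances $d_n$, $d_s$, $d_0$. The starting observation is standard for convex coercive Hamilton-Jacobi equations: for a fixed $y$, the function $x \mapsto d_*(x,y)$ is the maximal subsolution of the corresponding stationary equation vanishing at $y$ (this is precisely the content of the references \cite{BJ, IM, I2} on metric/intrinsic distances associated to convex Hamiltonians), and more generally, for any bounded datum $h$ on a closed set $F$, the function $x \mapsto \inf_{y\in F}(d_*(x,y)+h(y))$ is the maximal subsolution $v$ with $v \le h$ on $F$. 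I would first record this metric representation lemma (citing the appropriate results from \cite{BJ, IM, I2} and, for the Neumann boundary term, \cite{I2}), paying attention to how the oblique boundary condition \eqref{erg-2} enters the definition of $d_n$ (it restricts the class of admissible subsolutions but $d_n$ is still a genuine ``distance'' in the sense that it is itself a subsolution in both variables).

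With this lemma in hand, the formulas \eqref{ano-1}, \eqref{ano-3}, \eqref{ano-5} for $\phi^{-}_n, \phi^{-}_s, \phi^{-}_d$ are immediate: $\phi^{-}$ is by definition the maximal subsolution of (E-N), (E-SC) respectively (D)$_0$ lying below $u_0$ on $\cO$ (resp. determined by $g$ on $\bO$ in the Dirichlet case), so it equals $\min\{d_\bullet(x,y)+u_0(y)\}$ (resp. with $g$ and $y\in\bO$) by the representation lemma. For the formulas \eqref{ano-2}, \eqref{ano-4}, \eqref{ano-6} concerning $\phi^{\infty}$, the key point is that $\phi^{\infty}$ is the minimal \emph{solution} (not merely subsolution) dominating $\phi^{-}$, and the standard fact from weak KAM / Aubry--Mather theory in this PDE setting is that a subsolution $w$ is actually a solution precisely on (a neighborhood determined by) the Aubry set, so that the minimal solution above $\phi^{-}$ is obtained by propagating the values of $\phi^{-}$ \emph{from the Aubry set} via the distance: $\phi^{\infty}(x)=\min\{d_\bullet(x,y)+\phi^{-}(y)\mid y\in\cA_\bullet\}$. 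Here $\cA_n$, $\cA_s$ are exactly the projected Aubry sets, i.e. the points $y$ where $d_\bullet(\cdot,y)$ is a global solution, which is the definition given in the statement. I would verify the two inclusions: ``$\le$'' because the right-hand side is a solution (minimum of solutions $d_\bullet(\cdot,y)+\phi^{-}(y)$ over $y\in\cA_\bullet$, using the stability under infimum recalled from \cite{BJ, IM, I2} and convexity (A4)) lying above $\phi^{-}$; ``$\ge$'' because any solution $v\ge\phi^{-}$ satisfies $v(x)\le d_\bullet(x,y)+v(y)=d_\bullet(x,y)+\phi^{-}(y)$ for $y\in\cA_\bullet$ after one checks $v=\phi^{-}$ on $\cA_\bullet$.

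The step I expect to be the main obstacle is establishing that any solution $v$ of (E) dominating $\phi^{-}$ must in fact coincide with $\phi^{-}$ on the Aubry set $\cA_\bullet$ — equivalently that the supersolution property forces rigidity there. This is the PDE analogue of the fact that on the Aubry set there is essentially no room between subsolution and solution, and it is what makes \eqref{ano-2}, \eqref{ano-4} correct with the minimum taken only over $\cA_\bullet$ rather than all of $\cO$. The argument uses that $d_\bullet(\cdot,y)$ for $y\in\cA_\bullet$ is a solution and the comparison-type inequality $v(\cdot)\le v(y)+d_\bullet(\cdot,y)$ valid for subsolutions together with the reverse inequality coming from $v$ being a supersolution and $d_\bullet(y,\cdot)$ a subsolution; combining these at the point $y$ pins down $v(y)$. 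In the Dirichlet case \eqref{ano-6} there is the extra bookkeeping that $\phi^{-}$ must be replaced by $\phi^{-}_s\land\phi^{-}_d$ (the initial datum competes with the boundary datum), but the structure of the argument is identical. For the Neumann case one must additionally check that the oblique boundary condition is compatible with all these manipulations — this is where the $C^1$ regularity of $\bO$ and the techniques of Section 4.1 (the matrix $A$ with $A\gamma(\xi)=n(\xi)$, the auxiliary functions $d$ and $f$) are invoked to justify the comparison and stability statements up to the boundary; I would treat it briefly by referring to \cite{I2} and to the boundary arguments already developed in the proof of Lemma~\ref{lem:mu}.
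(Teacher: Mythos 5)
Your treatment of \eqref{ano-1}, \eqref{ano-3} and \eqref{ano-5} is correct and essentially the paper's argument: the inequality $\phi^{-}(x)\le d(x,y)+\phi^{-}(y)\le d(x,y)+u_{0}(y)$ (resp.\ $+\,g(y)$) gives one direction, and the fact that the right-hand side is itself an admissible subsolution gives the other. The problem is in your argument for \eqref{ano-2}, \eqref{ano-4}, \eqref{ano-6}. First, the ``rigidity'' statement you single out as the main obstacle --- that \emph{every} solution $v\ge\phi^{-}$ of (E) coincides with $\phi^{-}$ on $\cA$ --- is false: in the Neumann and state-constraint cases the equation and boundary condition are invariant under addition of constants, so if $\phi^{\infty}$ is a solution with $\phi^{\infty}=\phi^{-}$ on $\cA$, then $\phi^{\infty}+1$ is again a solution dominating $\phi^{-}$ which does not agree with $\phi^{-}$ on $\cA$. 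Second, even granting that claim, the inequality you extract from it, $v(x)\le d(x,y)+v(y)=d(x,y)+\phi^{-}(y)$ for $y\in\cA$, bounds $v$ \emph{from above} by $w_{\infty}(x):=\min_{y\in\cA}\{d(x,y)+\phi^{-}(y)\}$ and hence yields $\phi^{\infty}\le w_{\infty}$ --- the same direction as your first inclusion --- so the inequality $\phi^{\infty}\ge w_{\infty}$ is never established.

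The missing ingredient is the comparison principle relative to the projected Aubry set, i.e.\ the cited results \cite[Theorem 6.6]{I2}, \cite[Theorem 7.3]{IM}, \cite[Theorem 5.3]{M2}: if a subsolution lies below a supersolution on $\cA$, then it lies below it everywhere on $\cO$ (outside $\cA$ one has strict subsolutions, which is what drives the comparison). This is how the paper closes the argument: $w_{\infty}$ is a solution (by stability of the infimum under (A4)) satisfying $w_{\infty}\ge\phi^{-}$ on $\cO$ (which already follows from $\phi^{-}(x)\le d(x,y)+\phi^{-}(y)$), so $\phi^{\infty}\le w_{\infty}$ by the definition of $\phi^{\infty}$ as an infimum; conversely $w_{\infty}=\phi^{-}\le\phi^{\infty}$ on $\cA$, whence $w_{\infty}\le\phi^{\infty}$ on all of $\cO$ by the Aubry-set comparison applied to the subsolution $w_{\infty}$ and the solution $\phi^{\infty}$. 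No identification of an arbitrary solution with $\phi^{-}$ on $\cA$ is needed (it holds only for the infimum $\phi^{\infty}$ itself, and only as a consequence). You do cite the right theorems in passing, but the logical skeleton you propose to hang them on does not work as written.
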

\begin{proof}
We first see that \eqref{ano-5} holds. 
We denote by $w_{d}$ the right hand side 
of \eqref{ano-5}. 
By a standard stability result of viscosity solution 
$\phi_{d}^{-}$ is a subsolution of (D)$_{0}$. 
By Proposition \ref{prop:classic} we have 
$\phi_{d}^{-}\le g$ on $\bO$. 
Therefore we have 
$\phi_{d}^{-}(x)\le d_{0}(x,y)+\phi_{d}^{-}(y)\le d_{0}(x,y)+g(y)$ 
for any $x,y\in\cO$, 
which implies that $\phi_{d}^{-}\le w_{d}$ on $\cO$. 
It is easily seen that $w_{d}$ is a subsolution of 
(D)$_{0}$ and therefore $w_{d}\le\phi_{d}^{-}$ on $\cO$.

We next prove \eqref{ano-1} and \eqref{ano-3}. 
We denote $\phi^{-}_{n}$, $\phi^{-}_{s}$ 
and $\phi^{-}_{s}\land\phi_{d}^{-}$ 
by $\phi^{-}$ and 
denote by $w_{-}$ the right hand side 
of \eqref{ano-1} and \eqref{ano-3} in any case. 
Let $d$ denote $d_{n}$ or $d_{s}$ in any case. 
By the definition of $\phi^{-}$ we see that 
$\phi^{-}$ is a subsolution of (E-N), (E-SC) or (D)$_{0}$ and 
$\phi^{-}\le u_{0}$ on $\cO$. 
By the definition of $d$ we have 
$
\phi^{-}(x)
\le d(x,y)+\phi^{-}(y)
\le d(x,y)+u_{0}(y)
$
for all $x,y\in\cO$, which implies 
$\phi^{-}\le w_{-}$ on $\cO$. 
Note that 
$w_{-}$ is a subsolution of (E-N), (E-SC) or (D)$_{0}$ and 
$w_{-}(x)\le d(x,x)+u_{0}(x)=u_{0}(x)$ 
for all $x\in\cO$. 
By the definition of $\phi^{-}$
we obtain $\phi^{-}=w_{-}$ on $\cO$.

We finally prove \eqref{ano-2}, \eqref{ano-4} 
and \eqref{ano-6}. 
We denote $\phi^{\infty}_{n}$, $\phi^{\infty}_{s}$ 
and $\phi^{\infty}_{d}$ by $\phi^{\infty}$ and 
denote by $w_{\infty}$ the right hand side 
of \eqref{ano-2}, \eqref{ano-4} and \eqref{ano-6} 
in any case. 
Let $\cA$ denote $\cA_{n}$ or $\cA_{s}$ in any case.

It is easy to see that 
$w_{\infty}(x)=\phi^{-}(x)$ for all $x\in\cA$. 
By \cite[Theorem 6.6]{I2}, \cite[Theorem 7.3]{IM} and 
\cite[Theorem 5.3]{M2} 
we get $w_{\infty}\ge \phi^{-}$ on $\cO$. 
By the definition of $\phi^{\infty}$ we obtain 
$w_{\infty}\ge \phi^{\infty}$ on $\cO$.
Note that $\phi^{-}\le\phi^{\infty}$ on $\cO$. 
Then, we have 
\[
w_{\infty}(x)=\phi^{-}(x)\le\phi^{\infty}(x) 
\quad\textrm{for all} \ x\in\cA. 
\]
Therefore we get $w_{\infty}\le\phi^{\infty}$ on $\cO$ 
by \cite[Theorem 6.6]{I2}, \cite[Theorem 7.3]{IM} and 
\cite[Theorem 5.3]{M2}. 
\end{proof}

\subsection{On the compatibility condition \eqref{comp-cond}}
In this subsection we consider (CD) 
under assumptions (A1), (A4) and {\rm(A2)}$_{c}^{+}$ or {\rm(A2)}$_{c}^{-}$ 
and we remove the compatibility condition \eqref{comp-cond}, 
where 
$c:=c_{sc}$ if $c_{sc}>0$ and $c:=0$ if $c_{sc}\le0$. 
Let $u$ be a solution of (CD). 
We notice that, 
if we do not assume \eqref{comp-cond}, 
then $u$ may be discontinuous 
and therefore we interpret solutions as discontinuous viscosity 
solutions introduced in \cite{I0}.

We consider the following approximate problems of (CD) 
\begin{numcases}
{\textrm{(CD)}_{k}^{1} \hspace{1cm}}
u_t +H(x,Du)= 0 & in $\Q$, \nonumber \\
u(x,t)=g(x) & on $\bQ$, \nonumber\\
u(x,0)=u_{0}^{k}(x) & on $\cO$, \nonumber
\end{numcases}
and 
\begin{numcases}
{\textrm{(CD)}_{k}^{2} \hspace{1cm}}
u_t +H(x,Du)= 0 & in $\Q$, \nonumber \\
u(x,t)=g_{k}(x,t) & on $\bQ$, \nonumber\\
u(x,0)=u_{0}(x) & on $\cO$, \nonumber
\end{numcases}
where 
\begin{align*}
u_{0}^{k}(x)&:=\, 
\min_{y\in\cO}\{u_{0}^{g}(y)+k|x-y|^{2}\}, \\
u_{0}^{g}(x)&:=\, 
\left\{
\begin{array}{lcl}
u_{0}(x) & \textrm{for} 
\ x\in \Om, \\
u_{0}(x)\land g(x) & \textrm{for} \ x\in\bO,  
\end{array}
\right. \\
g_{k}(x,t)&:=\, 
\max\{g(x), u_{0}(x)-kt\} 
\end{align*}
for all $x\in\cO$, $g\ge0$ and $k\in\N$. 
Note that 
\[
u_{0}^{k}(x)\le g(x) \ \textrm{and} \ 
u_{0}(x)\le g_{k}(x,t)
\]
for all $(x,t)\in\bO\times[0,\infty)$ and 
$k\in\N$.

Let $u_{k}^{1}$ and $u_{k}^{2}$ be the solution 
of (CD)$_{k}^{1}$ and (CD)$_{k}^{2}$, respectively. 
By Theorem \ref{thm:profile} 
we have for any $k\in\N$ 
\[
\left\{
\begin{array}{ll}
\textrm{if} \ c_{sc}>0, 
& u_{k}^{1}(\cdot,t)+c_{sc}t\to 
\min\{d_{s}(\cdot,y)+\phi^{-}_{k}(y)\mid y\in\cA_{s}\}\\ 
\textrm{if} \ c_{sc}=0, 
& u_{k}^{1}(\cdot,t)\to 
\min\{d_{s}(\cdot,y)+\phi^{-}_{k}(y)\land\phi_{d}^{-}(y)
\mid y\in\cA_{s}\}, \\ 
\textrm{if} \ c_{sc}<0, 
& u_{k}^{1}(\cdot,t)\to\phi_{d}^{-} 
\end{array}
\right.
\]
uniformly on $\cO$ as $t\to\infty$, where 
\[
\phi^{-}_{k}(x):=\min\{d_{s}(x,y)+u_{0}^{k}(y)\mid y\in\cO\}
\ \textrm{for all} \ x\in\cO. 
\] 
By \cite[Theorem 6.1]{M3} we have for any $k\in\N$ 
\[
\left\{
\begin{array}{ll}
\textrm{if} \ c_{sc}\ge0, 
& u_{k}^{2}(\cdot,t)+c_{sc}t\to 
\min\{d_{s}(\cdot,y)+\phi^{-}_{s}(y)\land\phi^{-}_{g_k}(y)\mid 
y\in\cA_{s}\}\\ 
\textrm{if} \ c_{sc}<0, 
& u_{k}^{2}(\cdot,t)\to\phi_{d}^{-} 
\end{array}
\right.
\]
uniformly on $\cO$ as $t\to\infty$, where 
\begin{align*}
\phi^{-}_{g_k}(x)&:=\, 
\inf\{d_{s}(x,y)+\ul{g}_{k}(y)\mid y\in\bO\} 
\ \textrm{for all} \ x\in\cO,  
\\
\ul{g}_{k}(x)&:=\, 
\inf\{g_{k}(x,s)+c_{sc}s\mid s\ge0\}
\ \textrm{for all} \ x\in\bO.  
\end{align*}

\begin{prop}\label{prop:comp-cond1}
We have 
$\phi^{-}_{k}\to\phi_{s}^{-}$ uniformly on $\cO$ as $k\to\infty$. 
\end{prop}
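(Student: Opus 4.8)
The plan is to combine the elementary monotonicity of the quadratic inf-convolution $u_{0}^{k}$ with Dini's theorem, and then to reconcile the fact that $u_{0}^{k}$ approximates $u_{0}^{g}$ rather than $u_{0}$ itself on $\bO$.

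\textbf{Step 1 (properties of $u_{0}^{k}$).} For every $x\in\cO$, taking $y=x$ in the infimum gives $u_{0}^{k}(x)\le u_{0}^{g}(x)\le u_{0}(x)$, and $k\mapsto u_{0}^{k}(x)$ is nondecreasing. If $z_{k}=z_{k}(x)\in\cO$ realizes the minimum defining $u_{0}^{k}(x)$, then $k|x-z_{k}|^{2}\le u_{0}^{g}(x)-u_{0}^{g}(z_{k})\le 2\|u_{0}^{g}\|_{\Li(\cO)}$, so $|x-z_{k}|\to0$ uniformly in $x$ as $k\to\infty$; in particular $u_{0}^{k}\to u_{0}$ pointwise on $\Om$, while on $\bO$ one only has $u_{0}^{k}\to u_{0}^{g}=u_{0}\land g$.

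\textbf{Step 2 (monotone convergence and Dini).} Since $u_{0}^{k}\le u_{0}$ we get $\phi^{-}_{k}\le\phi^{-}_{s}$ on $\cO$, and since $k\mapsto u_{0}^{k}$ is nondecreasing so is $k\mapsto\phi^{-}_{k}$. Both $\phi^{-}_{k}$ and $\phi^{-}_{s}$ are continuous on $\cO$: by \eqref{ano-3} (and its analogue for $u_{0}^{k}$) they are infima over $y\in\cO$ of the family $y\mapsto d_{s}(\cdot,y)+\textrm{const}$, which is equicontinuous in the first variable because, by coercivity of $H$, the subsolutions of \eqref{erg-1} with $a=c_{sc}$ entering the definition of $d_{s}$ share a common modulus of continuity (cf. the proof of Theorem~\ref{thm:additive} and \cite[Proposition~8.1]{IM}). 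Hence $\phi^{-}_{k}$ increases pointwise to some $\Phi\in C(\cO)$ with $\Phi\le\phi^{-}_{s}$, and by Dini's theorem on the compact set $\cO$ the convergence $\phi^{-}_{k}\to\Phi$ is uniform. It therefore only remains to prove $\Phi\ge\phi^{-}_{s}$.

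\textbf{Step 3 (lower bound).} Fix $x\in\cO$, pick $y_{k}\in\cO$ with $\phi^{-}_{k}(x)=d_{s}(x,y_{k})+u_{0}^{k}(y_{k})$ and $z_{k}\in\cO$ with $u_{0}^{k}(y_{k})=u_{0}^{g}(z_{k})+k|y_{k}-z_{k}|^{2}$, so $|y_{k}-z_{k}|\to0$. Along a subsequence $y_{k},z_{k}\to y_{*}\in\cO$. With $\om$ the common modulus of $d_{s}(\cdot,\cdot)$ in the first slot, $\phi^{-}_{k}(x)\ge d_{s}(x,z_{k})+u_{0}^{g}(z_{k})-\om(|y_{k}-z_{k}|)$; letting $k\to\infty$ and using the lower semicontinuity of $u_{0}^{g}$ (valid since $u_{0}^{g}=u_{0}\in C(\cO)$ on $\Om$ and $u_{0}^{g}=u_{0}\land g$ on $\bO$), we obtain $\Phi(x)\ge d_{s}(x,y_{*})+u_{0}^{g}(y_{*})$.

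\textbf{Step 4 (reconciling $u_{0}^{g}$ with $u_{0}$ --- the main obstacle).} It remains to show $\min\{d_{s}(x,y)+u_{0}^{g}(y)\mid y\in\cO\}=\phi^{-}_{s}(x)$; the inequality ``$\le$'' is immediate from $u_{0}^{g}\le u_{0}$. If $y_{*}\in\Om$, or $y_{*}\in\bO$ with $u_{0}(y_{*})\le g(y_{*})$, then $u_{0}^{g}(y_{*})=u_{0}(y_{*})$ and $\Phi(x)\ge d_{s}(x,y_{*})+u_{0}(y_{*})\ge\phi^{-}_{s}(x)$ by \eqref{ano-3}. The delicate case is $y_{*}\in\bO$ with $u_{0}(y_{*})>g(y_{*})$, so $u_{0}^{g}(y_{*})=g(y_{*})$; here the triangle inequality for $d_{s}$ together with \eqref{ano-3} gives $\phi^{-}_{s}(x)\le d_{s}(x,y_{*})+\phi^{-}_{s}(y_{*})$, so it suffices to establish $\phi^{-}_{s}(y_{*})\le g(y_{*})$ on this part of $\bO$. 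This boundary estimate is precisely where the standing assumptions of the subsection (the coercivity of $H$ and (A7), via the boundary-trace control of subsolutions provided by Proposition~\ref{prop:classic}) are used to bound the boundary values of the subsolution $\phi^{-}_{s}$ of (E-SC) by $g$. Combining this with Steps~2--3 yields $\Phi=\phi^{-}_{s}$, and the uniform convergence from Step~2 is the assertion. I expect this last boundary reconciliation to be the only nonroutine point: Steps~1--3 are standard once the equicontinuity of $d_{s}$ is in hand, and it is this step that genuinely requires the hypotheses distinguishing this subsection from an arbitrary coercive Hamiltonian.
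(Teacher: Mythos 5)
Your Steps 1--3 reproduce, in essence, the paper's own argument: monotonicity of $k\mapsto u_0^{k}$ and hence of $k\mapsto\phi^{-}_{k}$, extraction of minimizers $y_k,z_k$ with $|y_k-z_k|\to0$, passage to the limit to get $\liminf_k\phi^{-}_{k}(x)\ge d_s(x,y_*)+u_0^{g}(y_*)$, and Dini's theorem to upgrade pointwise monotone convergence to uniform convergence. The gap is Step 4. To conclude $d_s(x,y_*)+u_0^{g}(y_*)\ge\phi^{-}_{s}(x)$ when $y_*\in\bO$ and $u_0(y_*)>g(y_*)$ you need $\phi^{-}_{s}(y_*)\le g(y_*)$, and you justify this by Proposition~\ref{prop:classic} and (A7). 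But Proposition~\ref{prop:classic} controls the boundary trace of subsolutions of the \emph{Dirichlet} problem (CD) or (D)$_a$, whereas $\phi^{-}_{s}$ is the maximal subsolution of (E-SC) below $u_0$: subsolutions of the state-constraint problem carry no boundary condition at all (the subsolution inequality on $\bO$ is $\min\{H-a,-1\}\le0$, which is vacuous). Since $\phi^{-}_{s}$ is built from $u_0$ and $d_s$ alone and never sees $g$, there is no reason to have $\phi^{-}_{s}\le g$ on $\bO$ once the compatibility condition \eqref{comp-cond} is dropped --- and dropping it is the entire point of this subsection. ((A7) is an assumption of the Namah--Roquejoffre subsection, not of this one, and would not help anyway.)

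This is not an artifact of your method: by your own Steps 1 and 3 the pointwise limit of $\phi^{-}_{k}$ is $\min_{y\in\cO}\{d_s(x,y)+u_0^{g}(y)\}$ with $u_0^{g}=u_0\land g$ on $\bO$, and this is in general strictly smaller than $\phi^{-}_{s}(x)$ (take $u_0$ a large constant and $g$ small on $\bO$). Accordingly, the paper's own proof does not establish the statement as literally written: its second sentence and its concluding chain of equalities identify the limit as $\phi^{-}_{s}\land\phi^{-}_{d}$, so the displayed limit $\phi^{-}_{s}$ in the statement is a misprint. Your Steps 1--3 are sound; Step 4 should be replaced by the two-sided comparison with $\phi^{-}_{s}\land\phi^{-}_{d}$, namely the upper bound $\phi^{-}_{k}\le\phi^{-}_{s}\land\phi^{-}_{d}$ (from $\phi^{-}_{k}\le u_0^{k}\le u_0^{g}$, so that $\phi^{-}_{k}$ is a subsolution below $u_0$ and below $g$ on $\bO$) and the lower bound $d_s(x,y_*)+u_0^{g}(y_*)\ge\phi^{-}_{s}(x)\land\phi^{-}_{d}(x)$, obtained by splitting according to whether $u_0^{g}(y_*)=u_0(y_*)$ or $u_0^{g}(y_*)=g(y_*)$.
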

\begin{proof}
It is easy to see that $\phi^{-}_{k}(x)$ is nondecreasing 
as $k\to\infty$ for any $x\in\cO$. 
We prove that 
$\phi^{-}_{k}(x)$ converges to 
$\phi^{-}_{s}(x)\land\phi_{d}^{-}(x)$ as $k\to\infty$ 
for any $x\in\cO$. 
Fix $x\in\cO$. 
First, we can easily see that 
$\limsup_{k\to\infty}\phi^{-}_{k}(x)\le \phi^{-}_{s}(x)\land \phi_{d}^{-}(x)$, 
since $\phi_{k}^{-}\le u_{0}^{k}\le u_{0}^{g}$ on $\cO$.

Choose $y_{k}, z_{k}\in\cO$ such that 
\begin{align*}
\phi^{-}_{k}(x)&=\, 
d_{s}(x,y_{k})+u_{0}^{k}(y_{k}) \ \textrm{and} \\
u_{0}^{k}(y_{k})&=\, 
u_{0}^{g}(z_{k})+k|y_{k}-z_{k}|^{2}. 
\end{align*}
Since $\cO$ is compact, we may assume that 
$y_{k}, z_{k}\to y_{0}\in\cO$ 
as $k\to\infty$ 
if taking a subsequence if necessary. 
We have 
\begin{align*}
\liminf_{k\to\infty}\phi^{-}_{k}(x)
&=\, 
\liminf_{k\to\infty}\bigl(d_{s}(x,y_{k})+u_{0}^{k}(y_{k})\bigr)\\
{}&\ge\, 
\liminf_{k\to\infty}
\bigl(d_{s}(x,y_{k})+u_{0}^{g}(z_{k})\bigr)\\
{}&\ge\, 
d_{s}(x,y_{0})+u_{0}^{g}(y_{0})\\
{}&\ge\, 
\phi^{-}_{s}(x)\land\phi_{d}^{-}(x). 
\end{align*}
Therefore we obtain 
$\limsup_{k\to\infty}\phi^{-}_{k}(x)
=\liminf_{k\to\infty}\phi^{-}_{k}(x)
=\phi^{-}_{s}(x)\land\phi_{d}^{-}(x)$. 
Since $\phi^{-}_{s}\land\phi_{d}^{-}\in C(\cO)$, 
in view of Dini's theorem, we see that 
the convergence is uniform for all $x\in\cO$. 
\end{proof}

\begin{prop}\label{prop:comp-cond2}
We have 
$\phi^{-}_{g_{k}}\to\phi_{d}^{-}$ uniformly on $\cO$ as $k\to\infty$. 
\end{prop}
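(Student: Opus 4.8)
The plan is to mimic the structure of the proof of Proposition~\ref{prop:comp-cond1}, replacing the role of the regularized initial data $u_0^k$ by the boundary-data terms $\underline{g}_k$ and $\phi^-_{g_k}$. The first thing I would do is unwind the definitions: $g_k(x,t)=\max\{g(x),u_0(x)-kt\}$, so for fixed $x\in\bO$ the quantity $g_k(x,s)+c_{sc}s$ equals $g(x)+c_{sc}s$ for $s$ large (hence its infimum over $s\ge0$ sees $g(x)$ when $c_{sc}\ge0$, because $c_{sc}\ge0$ forces the infimum to be attained at finite $s$ where $g_k=g$, since $g\ge0$ and the compatibility-free situation is being handled here) and equals $u_0(x)-ks+c_{sc}s$ for small $s$. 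The key monotonicity observation is that $k\mapsto g_k(x,t)$ is nonincreasing for each fixed $(x,t)$ (a larger $k$ makes $u_0(x)-kt$ smaller, hence the max smaller), so $k\mapsto\underline{g}_k(x)$ is nonincreasing, and therefore $k\mapsto\phi^-_{g_k}(x)=\inf\{d_s(x,y)+\underline g_k(y)\mid y\in\bO\}$ is nonincreasing. This gives existence of the pointwise limit.

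Next I would establish the two inequalities. For the upper bound, since $g_k(x,0)=\max\{g(x),u_0(x)\}\ge g(x)$ and more usefully $g_k(x,s)\ge g(x)$ for all $s$ (because $g\ge0$ is assumed in this subsection, though what actually matters is just $g_k(x,s)\ge g(x)$, which is immediate from the max), we get $\underline g_k(x)\ge g(x)$ is \emph{not} what we want; rather the relevant direction is that as $k\to\infty$, $g_k(x,s)\to g(x)$ for every fixed $s>0$, so $\underline g_k(x)=\inf_{s\ge0}(g_k(x,s)+c_{sc}s)$ decreases to $\inf_{s\ge0}(g(x)+c_{sc}s)=g(x)$ when $c_{sc}\ge0$ (and similarly to $\phi_d^-$-relevant data when $c_{sc}<0$). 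Then $\phi^-_{g_k}(x)=\inf_{y\in\bO}(d_s(x,y)+\underline g_k(y))\to\inf_{y\in\bO}(d_s(x,y)+g(y))=\phi_d^-(x)$ by \eqref{ano-5}. For the lower bound I would argue as in the previous proposition: choose $y_k\in\bO$ and $s_k\ge0$ with $\phi^-_{g_k}(x)=d_s(x,y_k)+g_{k}(y_k,s_k)+c_{sc}s_k$; by compactness of $\bO$ extract $y_k\to y_0\in\bO$, and handle $s_k$ by observing that either $s_k$ stays bounded (pass to a limit, using that $g_k(y_k,s_k)\ge g(y_k)$) or $s_k\to\infty$ (then $c_{sc}s_k\to+\infty$ if $c_{sc}>0$, contradicting the boundedness of $\phi^-_{g_k}$ from below via $d_s(x,y_k)+g(y_k)$; if $c_{sc}=0$ then $g_k(y_k,s_k)$ is eventually $g(y_k)$). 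In all cases $\liminf_k\phi^-_{g_k}(x)\ge d_s(x,y_0)+g(y_0)\ge\phi_d^-(x)$.

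Combining the two bounds gives pointwise convergence $\phi^-_{g_k}(x)\to\phi_d^-(x)$, and since the convergence is monotone (nonincreasing in $k$) with $\phi_d^-\in C(\cO)$, Dini's theorem upgrades it to uniform convergence on $\cO$, which is the assertion. The main obstacle I anticipate is the bookkeeping around the infimum over $s\ge0$ in $\underline g_k$: one must check carefully that the truncation term $u_0(x)-kt$ genuinely disappears in the limit uniformly enough, and handle the sign of $c_{sc}$ correctly (the case $c_{sc}<0$ needs the observation that then $\phi^-_{g_k}$ interacts with $\phi_d^-$ differently and one should double-check which limit object is claimed). A secondary technical point is justifying compactness extraction for $s_k$ when $c_{sc}=0$, where the cost term does not penalize large $s$; here one uses that for $s$ beyond the crossover time $g_k(y_k,s)=g(y_k)$ is independent of $s$, so without loss of generality $s_k$ is bounded.
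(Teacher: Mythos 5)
Your proposal is correct and follows essentially the same route as the paper: everything reduces to showing $\ul{g}_{k}\to g$ on $\bO$ via the two elementary bounds ($g_{k}(x,s)\ge g(x)$ gives $\ul{g}_{k}\ge g$; fixing $s>0$, taking $k$ large so that $g_{k}(x,s)=g(x)$, and then sending $s\to0$ gives the reverse), after which monotonicity in $k$ and Dini's theorem upgrade pointwise to uniform convergence. The only deviation is your compactness extraction of $(y_{k},s_{k})$ for the lower bound, which is an unnecessary detour: the inequality $\ul{g}_{k}\ge g$, which you noted but set aside as ``not what we want,'' already yields $\phi^{-}_{g_{k}}\ge\phi_{d}^{-}$ directly, and that is exactly the half of the argument the paper uses.
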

\begin{proof}
We only need to consider $c_{sc}\ge0$ and 
prove that 
$\ul{g}_{k}\to g$ 
uniformly on $\bO$ as $k\to\infty$. 
Fix $x\in\bO$. 
Since 
$g_{k}(x,s)\ge g(x)$ for all $x\in\bO$ and $s\ge0$, 
we have 
$\ul{g}_{k}(x)
=\inf_{s\ge0}\{g_{k}(x,s)+c_{sc}s\}\ge g(x)$. 
Thus we have 
$\liminf_{k\to\infty}\ul{g}_{k}(x)\ge g(x)$ 
for all $x\in\bO$.

Fix any $s\in(0,\infty)$. 
Then there exists $k_{0}\in\N$ such that 
for all $k\ge k_{0}$, $g_{k}(x,s)=g(x)$. 
We get 
$\ul{g}_{k}(x)\le g_{k}(x,s)+c_{sc}s
=g_{k}(x)+c_{sc}s$. 
Therefore we have 
$\limsup_{k\to\infty}\ul{g}_{k}(x)
\le g(x)+c_{sc}s$. 
Sending $s\to0$, we obtain 
$\limsup_{k\to\infty}\ul{g}_{k}(x)\le g(x)$. 
Noting that $g\in C(\bO)$ and $\ul{g}_{k}$ is nonincreasing 
as $k\to\infty$, in view of Dini's theorem 
we get a conclusion. 
\end{proof}

By the comparison principle for (IB) we have 
\[
u_{k}^{1}\le u\le u_{k}^{2} \ \textrm{on} \ \cQ 
\ \textrm{for all} \ k\in\N. 
\]
By Propositions \ref{prop:comp-cond1}, \ref{prop:comp-cond2} 
we obtain 
\[
\limssup_{t\to\infty}u 
=
\limiinf_{t\to\infty}u
=
\lim_{k\to\infty}u^{1}_{k}
=
\lim_{k\to\infty}u^{2}_{k}
 \ \textrm{on} \ \cO. 
\]

\section{Appendix : Existence, Uniqueness and Regularity Results for {\rm (IB)}}

All the results presented in this appendix may appear at first glance as being well-known and covered by
the standard results of the theory of viscosity solutions 
(see for instance \cite{L2, I2, B2, CL}). But some of them are not completely
standard because we are dealing with an oblique vector field $\gam$ which is only continuous (and not Lipschitz continuous as in the standard cases) and with a domain $\Om$ which is not very regular. 
Of course, we can extend the classical results to this more general framework because the solutions of {\rm (IB)} are expected to be in $W^{1,\infty}$ since the Hamiltonian is coercive. But we have also to prove directly the existence of such $W^{1,\infty}$ -solutions, by using only comparison results which hold for $W^{1,\infty}$ sub and supersolutions, which is a little bit unusual in the theory of viscosity solutions.

Also, we remark that the calculations in the proof of comparison principles 
are used in the proofs of a key ingredient, Lemma \ref{lem:mu}, 
in order to prove the asymptotic monotone property, 
Theorem \ref{thm:large-time}. 
Therefore, reminding the proofs of the comparison principles 
helps us to understand them.

Before providing these results and their proofs, we recall, for the reader's convenience, the definition of 
viscosity solutions for (IB), and in particular of boundary conditions in the viscosity sense
(see \cite{CIL} for instance).
\begin{defn}\label{def} 
An upper-semicontinuous function $u$ 
{\rm (}resp., a lower semicontinuous function $u${\rm )} 
is a subsolution {\rm (}resp., supersolution{\rm )} of {\rm (IB)}
if the following conditions hold:\\
{\rm (i)} $u$ is a {\rm(}viscosity{\rm)} subsolution 
{\rm(}resp., {\rm(}viscosity{\rm)} supersolution{\rm)} of \eqref{ib-1}, \\
{\rm (ii)} 
$u(x, 0)\le u_{0}(x)$ 
{\rm (}resp., $u(x,0)\ge u_{0}(x)${\rm )} 
for all $x\in\cO$, 
and
\\{\rm (iii)} for any $\phi\in C^1(\cQ)$ 
and any $(x_0,t_0)\in\bQ$ such that 
$u-\phi$ takes a local maximum {\rm (}resp., minimum{\rm )}
at $(x_0,t_0)$, 
$$
\min\{\phi_t(x_0,t_0)+H(x_0,D\phi(x_0,t_0)), 
B(x_0,u(x_{0},t_{0}),D\phi(x_0,t_0))\}\le  0$$
{\rm (}resp.,
$$ 
\max\{\phi_t(x_0,t_0)+H(x_0,D\phi(x_0,t_0)), 
B(x_0,u(x_{0},t_{0}),D\phi(x_0,t_0))\}\ge 0).$$
We call u a solution of {\rm (IB)} if it is a subsolution and a supersolution of {\rm (IB)}.
\end{defn}

\subsection{Comparison Results for {\rm (IB)}}

\begin{thm}\label{thm:comparison}
Assume that {\rm (BA)} holds.\\
{\rm (i)} {\bf (Neumann/oblique derivative problem)}
Let $u\in C(\cQ)$, 
$v\in\LSC(\cQ)$ 
be a subsolution and a supersolution of {\rm (CN)}, respectively. 
If $u(\cdot,0)\le v(\cdot,0)$ on $\cO$, 
then $u\le v$ on $\cQ$. \\
{\rm (ii)} {\bf (State constraint or Dirichlet problem)}  
Let $u\in C(\cQ)$, $v\in\LSC(\cQ)$ 
be a subsolution and a supersolution of {\rm (SC)} 
or {\rm (CD)}, respectively. 
If $u(\cdot,0)\le v(\cdot,0)$ on $\cO$, 
then $u\le v$ on $\cQ$. 
\end{thm}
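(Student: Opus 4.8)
The plan is to prove both statements by the standard doubling-of-variables technique adapted to the boundary conditions in the viscosity sense, exploiting crucially that the Hamiltonian is coercive (A1) and that sub- and supersolutions live in (or can be handled as if in) $W^{1,\infty}$. First I would reduce to a strict-subsolution situation: replacing $u$ by $u - \theta/(T-t) - \ep t$ (or $u-\ep t$ on a finite cylinder $\cQT$) makes $u_t + H(x,Du)$ strictly negative, adds a useful coercivity margin, and lets me send $\ep\to0$ and $T\to\infty$ at the end; since solutions are bounded on $\cQ$ by (A1), such penalizations are harmless. Then I would argue by contradiction: if $\sup_{\cQ}(u-v) = m > 0$, I localize to a finite time interval, and after the penalization the supremum is attained at an interior-in-time point of $\cQT$.

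Next I would double variables, considering
\[
\Phi(x,y,t,s) = u(x,t) - v(y,s) - \frac{|x-y|^2}{2\ep^2} - \frac{|t-s|^2}{2\ep^2} - (\text{boundary correction terms}),
\]
and let $(\ol{x},\ol{y},\ol{t},\ol{s})$ be a maximum point. The coercivity (A1) forces $|\ol{x}-\ol{y}|/\ep^2$ to stay bounded, so these penalization gradients converge along a subsequence, exactly as in the proof of Lemma \ref{lem:mu}. The delicate point is when $\ol{x}$ or $\ol{y}$ lies on $\bO$: there one must use the boundary condition in the viscosity sense from Definition \ref{def}. For the Neumann/oblique case I would follow the classical device of inserting a term built from a quadratic function $f$ with $Af(\text{near }\xi)$ aligned with $n$ and $\gam$ (the matrix $A$ with $A\gam(\xi)=n(\xi)$ from \cite[Lemma 4.1]{LS}, precisely as used in Section 4.1), together with a small multiple of a linear function $d$ with $Dd$ pointing along $-\gam$; this forces the $\gamma\cdot Du$ part of $B$ to be strictly on the wrong side of $g$, so that the Hamiltonian inequality must hold at the boundary too. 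For the state-constraint case the supersolution inequality holds up to the boundary by definition, so only the subsolution needs care: if $\ol{x}\in\bO$ one shifts the domain inward by $a e_N$ (as in Section 4.2) using the $C^0$-graph structure of $\bO$, obtaining the Hamiltonian inequality at an interior point and passing $a\to0$ with a modulus $\om_R(a)$. For the Dirichlet case one first observes, via coercivity, that any subsolution satisfies $u\le g$ and any supersolution $v\ge g$ on $\bQ$ (Proposition \ref{prop:classic}), so at a boundary maximum point of $\Phi$ one already has $u(\ol{x},\ol{t}) - v(\ol{y},\ol{s}) \le g(\ol{x}) - g(\ol{y}) \to 0$, contradicting $m>0$; hence the maximum is interior and the usual interior argument closes it.

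Once both $(\ol{x},\ol{t})$ and $(\ol{y},\ol{s})$ are reduced to interior points (possibly after the boundary manipulations above), I would subtract the two viscosity inequalities
\[
\phi_t + H(\ol{x}, D_x(\tfrac{|x-y|^2}{2\ep^2}+\cdots)) \le 0, \qquad
-\psi_s + H(\ol{y}, \cdots) \ge 0,
\]
use that the two Hamiltonian slots have the same gradient $\ol{x}-\ol{y})/\ep^2$ plus $o(1)$ and that $H$ is uniformly continuous on $\cO\times B(0,R)$ (the gradients being bounded by coercivity), and use the strict negativity gained from the penalization to reach $0 < 0$ in the limit $\ep\to0$. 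Sending $\ep\to0$, then $\theta\to0$ (or $\ep\to0$) and $T\to\infty$ yields $u\le v$ on all of $\cQ$.

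The main obstacle is the boundary analysis in the Neumann/oblique case under only the assumed regularity, namely $\gamma$ merely continuous (not Lipschitz) and $\bO$ only $C^1$. The classical comparison proofs use Lipschitz regularity of $\gamma$ to control the oblique-derivative terms; here one must instead lean entirely on coercivity to keep all gradients bounded and handle the oblique term through the quadratic/linear corrections, absorbing the lack of Lipschitz regularity into a modulus of continuity $m(\ep)$ that is small compared to the correction parameter $\al$. This is exactly the estimate that reappears — and is reused — in the proof of Lemma \ref{lem:mu} in Section 4.1, which is why the authors remark that recalling the comparison proof is helpful for the asymptotic arguments.
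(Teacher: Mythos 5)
Your Neumann/oblique argument (sup-convolution or penalization in time, doubling with the quadratic $f(x)=\frac12 Ax\cdot x$ built from $A\gam(\xi)=n(\xi)$, the linear correction $d$ with the $\al$ versus $m(\ep)$ bookkeeping) and your state-constraint argument (inward shift by $a e_N$ of the subsolution using the $C^0$-graph structure, with a modulus $\om_R(a)$) are essentially the paper's proof. The problem is the Dirichlet case. You assert that, by coercivity, ``any supersolution satisfies $v\ge g$ on $\bQ$'' and cite Proposition \ref{prop:classic} for it; but that proposition concerns \emph{subsolutions only}, and the claim is false for the generalized (viscosity-sense) Dirichlet condition. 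A supersolution only satisfies $\max\{v_t+H(x,Dv),\,v-g\}\ge 0$ on $\bQ$, i.e.\ \emph{either} the PDE supersolution inequality \emph{or} $v\ge g$ at each boundary test point, and coercivity gives no help on the supersolution side (the blow-up of the test-function gradient makes $H$ large, which is the wrong sign for supersolutions; the state-constraint case $g\equiv+\infty$ shows the pointwise inequality cannot hold in general). Consequently your conclusion that a boundary maximum of $\Phi$ forces $u-v\le g(\ol x)-g(\ol y)\to 0$, hence that the maximum is interior, does not follow, and the interior argument alone cannot close the proof.

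The correct route, which is the one the paper takes, is to keep the boundary maximum point $\xi\in\bO$ and argue as follows: Proposition \ref{prop:classic} gives $u^{\del}(\xi,\tau)\le g(\xi)$ for the \emph{subsolution}; combined with $u^{\del}(\xi,\tau)-v(\xi,\tau)>0$ this yields $v(\xi,\tau)<g(\xi)$, and by lower semicontinuity $v(\ol y,\ol t)<g(\ol y)$ for the nearby doubled points, so the supersolution must satisfy the PDE inequality there. The subsolution is then handled exactly as in the state-constraint case, by the inward translation $u_a^{\del}(x,t):=u^{\del}(x+ae_N,t)$, which moves its test point into $\Om$ where the PDE subsolution inequality is unconditional. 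Note also that even knowing $u\le g$ on $\bQ$ does not by itself give the subsolution PDE inequality at a boundary point (the viscosity condition is a $\min$, and the $u-g\le 0$ branch may be the one that holds), so the shift is needed for the subsolution in the Dirichlet case as well.
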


\begin{rem}
{\it 
It is worth mentioning that 
it is well known that there exists a discontinuous solution of 
{\rm (SC)} and {\rm (CD)}, 
which implies that 
in the comparison principle for {\rm (SC)} and {\rm (CD)}, 
we can not replace requirements of continuity of $u$  
by that of semicontinuity. 
}
\end{rem}

\begin{proof}[Proof of Theorem {\rm \ref{thm:comparison} \ (i)}]
We argue by contradiction assuming that there would exist $T>0$ such that $\max_{\ol{Q}_T}(u-v)(x,t)>0$, 
where $\QT:=\Om\times(0,T)$.

Let $u^{\del}$ denote the function
$$u^{\del}(x,t):=\max_{s\in[0,T+2]}\{u(x,s)-(1/\del)(t-s)^2\}\; ,$$ 
for any $\del>0$. This sup-convolution procedure is standard in the theory of viscosity solutions (although, here, it acts only on the time-variable) and it is known that, for $\del$ small enough, $u^{\del}$ is a subsolution of (CN) 
in $\Om\times(a_{\del},T+1)$, where $a_{\del}:=(2\del\max_{Q_{T+2}}|u(x,t)|)^{1/2}$ 
(see \cite{B2, CIL} for instance). 

Moreover, it is easy to check that 
$|u^{\del}_{t}|\le C_{\del}$ in $\Om\times(a_{\del},T+1)$ 
and 
therefore by the coercivity of $H$ and the $C^{1}$-regularity of 
$\bO$ we have, for all $x,y\in\cO$, $t,s\in[a_{\del},T+1]$
\begin{equation}\label{pf:comp:lip}
|u^{\del}(x,t)-u^{\del}(y,s)|\le 
C_{\del}(|x-y|+|t-s|) \ 
\end{equation}
for some $C_{\del}>0$.
Finally, as $\del\to 0$, $\max_{\ol{Q}_T}(u^{\del}-v)(x,t) \to \max_{\ol{Q}_T}(u-v)(x,t)>0$.

Therefore it is enough to consider $\max_{\ol{Q}_T}(u^{\del}-v)(x,t)$ for $\del>0$ small enough, and we follow the classical proof by introducing 
$$ \max_{\ol{Q}_T}\{(u^{\del}-v)(x,t)-\eta t\} \; ,$$
for $0< \eta \ll 1$. This maximum is achieved at $(\xi,\tau)\in\cO\times[0,T]$, namely
$(u^{\del}-v)(\xi,\tau)-\eta t
=\max_{\ol{Q}_T}\{(u^{\del}-v)(x,t)-\eta t\}$. 
Clearly $\tau$ depends on $\eta$ but we can assume that it remains bounded away from $0$, otherwise we easily get a contradiction.

We only consider the case where $\xi\in\bO$. 
We define the function $\Psi:\cO^{2}\times[0,T]\to\R$ by 
\begin{align*}
\Psi(x,y,t):=
&\, 
u^{\del}(x,t)-v(y,t)-\eta t
-\frac{1}{\ep^{2}}f(x-y)\\
{}&\, 
+g(\xi)(d(x)-d(y))
+
\al(d(x)+d(y))-|x-\xi|^{2}-(t-\tau)^{2}, 
\end{align*}
where $f$ and $d$ are given 
by \eqref{func-f} and \eqref{func-d}. 
Let $\Psi$ achieve its maximum at 
$(\ol{x},\ol{y},\ol{t})\in\cO^{2}\times[0,T]$. 
By a standard arguments, we have 
\begin{equation}\label{pf:comp:conv}
\ol{x}, \ol{y}\to \xi 
\ \textrm{and} \ 
\ol{t} \to \tau 
\ \textrm{as} \ \ep\to0 
\end{equation}
by taking a subsequence if necessary and, because of the Lipschitz continuity 
\eqref{pf:comp:lip} of $u^\del$, we have 
\begin{equation}\label{pf:comp:bdd}
\frac{|\ol{x}-\ol{y}|}{\ep^{2}}\le C_{\del} \ 
\end{equation}

Derivating (formally) $\Psi$ with respect to each variable 
$x,y$ at $(\ol{x},\ol{y},\ol{t})$,  we have 
\begin{align*}
D_{x}u^{\del}(\ol{x},\ol{t})
&=\, 
\frac{1}{\ep^{2}}A(\ol{x}-\ol{y})
+2(\ol{x}-\xi)-\al Dd(\ol{x})
-g(\xi)Dd(\ol{x}), \\
D_{y}v(\ol{y},\ol{t})&=\, 
\frac{1}{\ep^{2}}A(\ol{x}-\ol{y})
+\al Dd(\ol{y})
-g(\xi)Dd(\ol{y}). 
\end{align*}
We remark that 
we should interpret $D_{x}u^{\del}$ and $D_{y}v$ in the viscosity solution sense here. We also point out that the viscosity inequalities we are going to write down below, hold up to time $T$, in the spirit of \cite{B2}, Lemma 2.8, p. 41.

Since $\Om$ is a domain with a $C^{1}$-boundary, 
we first observe that  
for any $x\in\bO$ and $y\in\cO$ 
\[
(x-y)\cdot n(x)
\ge o(|x-y|), 
\]
where 
$o:[0,\infty)\to\R$ 
is a continuous function such that 
$|o(r)|/r\to0$ as $r\to0$.

Moreover, we have 
\begin{align*}
{}&
A(x-y)\cdot\gam(x)\\
\ge&\, 
(x-y)\cdot A\gam(\xi)
-|y-x|m_{\gam}(|x-\xi|)\\
=& \, 
(x-y)\cdot n(\xi)-|y-x|m_{\gam}(|x-\xi|)\\
\ge& \, 
(x-y)\cdot n(x)
-|y-x|\bigl(m_{\gam}(|x-\xi|)
+m_{n}(|x-\xi|)\bigr)\\
\ge& \, 
o(|x-y|)
-|y-x|\bigl(m_{\gam}(|x-y|)
+m_{n}(|x-\xi|)\bigr), 
\end{align*}
where 
$m_{\gam}$ and $m_{n}$ are modulus of continuity of 
$\gam, n$ on $\cO$, respectively.

Setting 
$m_{d}:=|o(r)|/r$, 
by \eqref{pf:comp:bdd}, we obtain 
\begin{equation}\label{pf-main:error-y}
\frac{1}{\ep^{2}}A(\ol{x}-\ol{y})\cdot \gam(\ol{x})
\ge 
-C_{\del}\bigl(
m_{d}(C_{\del}\ep^{2})
+m_{\gam}(C_{\del}\ep^{2})
+m_{n}(|\ol{x}-\xi|)\bigr). 
\end{equation}
Moreover, we have 
\[
\al Dd(\ol{x})\cdot\gam(\ol{x})
\le 
-\al+\al m_{\gam}(|\ol{x}-\xi|)
\]
and
\begin{align*}
g(\xi)Dd(\ol{x})\cdot\gam(\ol{x})
&\le \, 
-g(\ol{x})+m_{g}(|\ol{x}-\xi|), 
\end{align*}
where $m_{g}$ is a modulus of continuity of $g$ on $\bO$. 
Therefore we have 
\[
D_{x}u(\ol{x},\ol{t})\cdot \gam(\ol{x})
\ge 
g(\ol{x})-m(\ep)
+\al, 
\]
for a modulus $m$.  
Therefore, if $\ep>0$ is suitable small compared to $\al>0$, 
we have 
\[
D_{x}u(\ol{x},\ol{t})\cdot \gam(\ol{x})>g(\ol{x}). 
\]
Similarly, if $\ol{y}\in\bO$, then 
we have 
\[
D_{y}u(\ol{y},\ol{t})\cdot \gam(\ol{y})<g(\ol{y})
\]
for $\ep>0$ which is suitable small compared to $\al>0$.

Therefore, by the definition of viscosity solutions of (CN), using the arguments of the User's guide to viscosity solutions \cite{CIL}, there exists 
$a_{1}, a_{2}\in \R$ such that
\begin{align*}
a_{1}+H(\ol{x}, \frac{1}{\ep^{2}}A(\ol{x}-\ol{y})+2(\ol{x}-\xi)
-\al Dd(\ol{x})-g(\xi)Dd(\ol{x}))
&\, 
\le 0, \\ 
a_{2}+ H(\ol{y}, \frac{1}{\ep^{2}}A(\ol{x}-\ol{y})+\al Dd(\ol{x})
-g(\xi)Dd(\ol{y}))
&\, 
\ge0 
\end{align*}
with $a_{1}-a_{2}
=\eta+2(\ol{t}-\tau)$. 
By \eqref{pf:comp:bdd} 
we may assume that 
\[
\frac{1}{\ep^{2}}A(\ol{x}-\ol{y})\to p
\]
as $\ep\to0$ for some $p\in\R^{N}$ 
by taking a subsequence if necessary. 
Sending $\ep\to0$ and then $\al\to0$ 
in the above inequalities, 
we have a contradiction since $a-b \to \eta >0$ while the $H$-terms converge to the same limit. Therefore $\tau$ cannot be assumed to remain bounded away from $0$ and the conclusion follows.
\end{proof}

\begin{proof}[Proof of Theorem {\rm \ref{thm:comparison} \, (ii)}]
We only prove the comparison principle for (CD), 
since we can regard (SC) as a problem of (CD) 
with the extreme form ``$g(x)\equiv+\infty$".

To justify this choice, we first prove the

\begin{prop}\label{prop:classic} {\bf (Classical Dirichlet Boundary Conditions)}
Assume that {\rm (BA)} holds. 
If $u\in\USC(\cO\times[0,\infty))$ is a subsolution of 
{\rm (CD)}, then $u(x,t)\le g(x)$ 
for all $(x,t)\in\bQ$. 
\end{prop}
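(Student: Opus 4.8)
The plan is to prove $u\le g$ on $\bQ$ by comparing $u$ with a supersolution of (CD) that equals $g$ on $\bO$. Note first that, by the compatibility condition \eqref{comp-cond} together with the initial condition in the definition of a subsolution of (CD), one already has $u(x,0)\le u_0(x)\le g(x)$ for $x\in\bO$; the real issue is to propagate this to $t>0$, and for that the coercivity (A1) is the essential ingredient. (For a smooth boundary one could instead argue purely locally at a boundary point $(x_0,t_0)$ where $u>g$, testing $u$ from above with a $C^1$ function whose gradient points strongly into $\Om$ and whose modulus is as large as desired, so that $H(x_0,\cdot)$ is large and the Dirichlet alternative in the generalized boundary condition cannot hold; but with only a $C^{0}$ boundary there is in general no well-defined inward normal direction, and this forces the barrier/comparison route.)

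\emph{Construction of a local barrier.} Fix $T>0$. Using (A1), and using near each $x_0\in\bO$ the inward graph coordinate $x_N-b(x')$ of the $C^{0}$-representation of $\bO$ and the interior translation $x\mapsto x+ae_N$ sending $\cO\cap B(x_0,r)$ into $\Om$ (as in Section~4.2), one builds a bounded Lipschitz function $w$ on a neighbourhood $\mathcal V$ of $\bO$ in $\cO$ such that: $w=g$ on $\bO$ and $w\ge g$ on $\mathcal V$; $w$ is a viscosity supersolution of \eqref{ib-1} in $\mathcal V\cap\Om$, hence a supersolution of (CD) in $\mathcal V$; and, after raising $w$ by a large additive constant on the part of $\mathcal V$ at distance $\ge d_0$ from $\bO$ and using \eqref{comp-cond} and the continuity of $u_0$, one also has $w(\cdot,0)\ge u_0\ge u(\cdot,0)$ on $\mathcal V$ and $w\ge u$ on $\{x\in\mathcal V:\dist(x,\bO)=d_0\}\times[0,T]$ (here we use that $u$ is bounded above on the compact set $\cO\times[0,T]$).

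\emph{Comparison.} It then suffices to show $u\le w$ on $(\mathcal V\cap\cO)\times[0,T]$: restricting to $x\in\bO$ gives $u(x,t)\le w(x,t)=g(x)$ for $0<t\le T$, and letting $T\to\infty$ yields the proposition. This is a comparison, localized near $\bO$, between the subsolution $u$ of (CD) and the supersolution $w$, whose parabolic boundary consists of $\{t=0\}$, of $\{\dist(\cdot,\bO)=d_0\}\times[0,T]$ (on both of which $u\le w$ by construction) and of $\bQ$ (where both functions satisfy the Dirichlet condition in the viscosity sense). It is proved by exactly the doubling-of-variables computation carried out in the proof of Theorem~\ref{thm:comparison} — sup-convolution in time to gain Lipschitz regularity, the penalization $\ep^{-2}f(x-y)$, and the auxiliary functions $f$ and $d$ near the boundary — so there is no circularity: we do not invoke Theorem~\ref{thm:comparison} itself, we only rerun its calculations, now with the smooth function $w$ in place of one of the solutions.

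The main obstacle is the construction of the barrier $w$ under the very weak $C^{0}$ assumption on $\bO$: neither an outer normal nor a smooth distance function to $\bO$ is available, so the roles usually played by those objects must be taken over by the graph coordinate $x_N-b(x')$ and by the interior translation $x\mapsto x+ae_N$ of Section~4.2 — exactly the device that already appears in the comparison proofs. Once $w$ is in hand, establishing $u\le w$ is routine given the material already present in this Appendix.
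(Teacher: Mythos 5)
Your route (a barrier near $\bO$ plus a localized comparison) is genuinely different from the paper's, and it has real gaps. First, the premise that forces you onto it is mistaken: the paper proves the proposition by exactly the ``purely local'' argument you discard as unavailable for $C^{0}$ boundaries. No inward normal is needed, because (A1) is uniform over all directions of $p$. The paper picks $x_k\in\R^{N}\setminus\Om$ with $|x_0-x_k|=1/k^{2}$ (such points exist arbitrarily close to any point of a $C^{0}$ boundary) and tests $u$ from above with $k|x-x_k|+\alpha_k(t-t_0)^{2}$ at an approximate maximum point $(\xi_k,\tau_k)$; the spatial gradient has modulus exactly $k$, so by coercivity the PDE alternative in the viscosity subsolution condition fails for large $k$ (after balancing $\alpha_k$ against the time-derivative term), forcing $u(\xi_k,\tau_k)\le g(\xi_k)$ and, in the limit, $u(x_0,t_0)\le g(x_0)$. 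This is short and uses no comparison principle at all.

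Second, the barrier route as you describe it does not close. (i) You want a Lipschitz $w$ with $w=g$ on $\bO$, $w\ge g$ nearby and $H(x,Dw)\ge0$; the natural candidate $\inf_{y\in\bO}\{g(y)+C|x-y|\}$ equals $g$ on $\bO$ only if $g$ is $C$-Lipschitz, whereas $g$ is merely continuous. (ii) More seriously, ``rerunning the calculations of Theorem \ref{thm:comparison}(ii)'' stalls exactly where the doubled maximum point $\ol{y}$ of the supersolution lands on $\bO$: the viscosity supersolution condition only yields ``$H\ge0$ \emph{or} $w(\ol{y})-g(\ol{y})\ge0$'', and for your barrier the second alternative holds automatically (you built $w\ge g$ on $\bO$), so you extract no PDE inequality and no contradiction. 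In the paper this alternative is ruled out for $v$ precisely by invoking Proposition \ref{prop:classic} itself (to get $v(\xi,\tau)<g(\xi)$), so your plan is latently circular at that step unless you arrange for $w$ to be a supersolution of the equation up to and including $\bO$ --- which again requires large-slope cone functions at boundary points, i.e., the paper's direct argument. (iii) ``Raising $w$ by a large additive constant on the part of $\mathcal{V}$ at distance $\ge d_0$ from $\bO$'' as stated destroys the supersolution property in the transition region. I recommend abandoning the barrier and writing out the direct test-function argument.
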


In the same way we can prove that, if $u\in\USC(\cO)$ is a subsolution of {\rm (D)$_{a}$} 
for any $a\ge c_{sc}$, then $u(x)\le g(x)$ for all $x\in\bO$. 

\begin{proof}
Fix $(x_0,t_0)\in\bO\times(0,\infty)$. 
Fix $r\in(0,t_0)$ and 
set $K:=\bigl(\ol{B}(x_0,r)\cap\cO\bigr)\times[t_0-r,t_0+r]$. 
Choose a sequence $\{x_k\}_{k\in\N}\subset\R^{N}\setminus\Om$ 
such that $|x_0-x_k|=1/k^2$. 
We define the function $\phi:K\to\R$ by 
\[\phi(x,t)=u(x,t)-k|x-x_k|-\alpha_k(t-t_0)^2,\]
where $\{\alpha_k\}_{k\in\N}\subset(0,\infty)$ is 
a divergent sequence which will be fixed later. 
Let $(\xi_k,\tau_k)\in K$ be a maximum point of $\phi$ on $K$. 
Noting that $\phi(\xi_k,\tau_k)\ge\phi(x_0,t_0)$, 
we have 
\begin{equation}\label{dirichlet-classic-estimate}
k|\xi_k-x_k|+\alpha_k(\tau_k-t_0)^2\le
u(\xi_k,\tau_k)-u(x_0,t_0)+k|x_0-x_k|\le C, 
\end{equation}
where $C>0$ is a constant independent of $k$. 
From the above, we see that 
$\xi_k\to x_0$, $\tau_k\to t_0$ as $k\to\infty$. 

By the viscosity property of $u$, we have
\[
q_k+H(\xi_k,p_k)\le0 
\qquad \textrm{or} \qquad 
u(\xi_k,\tau_{k})\le g(\xi_k), 
\]
if $k\in\N$ is sufficiently large, 
where $p_k=k(\xi_k-x_k)/|\xi_k-x_k|$ 
and $q_k=2\alpha_k(\tau_k-t_0)$. 
Set $f(k):=\min_{x\in B(x_0,r)\cap\cO}H(x,p_k)$. 
Noting that $|p_k|=k$, 
by the coercivity of $H$, 
$f(k)\to\infty$ as $k\to\infty$. 
Choose $\{\alpha_k\}_{k\in\N}\subset(0,\infty)$ 
such that 
\[\alpha_k\to\infty \ \textrm{as} \  k\to\infty 
\ \textrm{and} \ 
2\sqrt{C\alpha_k}+1\le f(k) \ 
\textrm{for sufficinetly large} \ k\in\N. \]
Since we have 
$\alpha_k|\tau_k-t_0|\le\sqrt{C\alpha_k}$ 
for all $k\in\N$ by \eqref{dirichlet-classic-estimate}, 
\[
q_k+H(x_k,p_k)
\ge-2\alpha_k|\tau_k-t_0|+f(k)
\ge-2\sqrt{C\alpha_k}+f(k)
\ge1>0
\]
for sufficiently large $k\in\N$, 
we must have 
$u(\xi_k,\tau_k)\le g(\xi_k)$. 
Sending $k\to\infty$, 
we obtain $u(x_0,t_0)\le g(x_0)$. 
\end{proof}

Now we return to the proof of the comparison result for (CD).
We argue by contradiction, exactly in the same way as for (CN), introducing $Q_{T}$ and
the function $u^{\del}$ which is a subsolution of (CD) in $\Om\times(a_{\del},T+1)$. We still denote by $(\xi,\tau)\in\cO\times[0,T]$, a maximum point of $(u^{\del}-v)(x,t)-\eta t$, namely
$$(u^{\del}-v)(\xi,\tau)-\eta \tau=\max_{\ol{Q}_T}\{(u^{\del}-v)(x,t)-\eta t\}\; .$$ 
Again we may assume that $\tau$ remains bounded away from $0$, otherwise the result would follow.

We only consider the case where $\xi\in\bO$. 
Since $\Om$ is a domain with a $C^{0}$-boundary, 
there exist $r>0$, $b\in C(\R^{N-1},\R)$ 
such that, 
after relabelling and re-orienting the coordinates axes 
if necessary, we may assume 
$B(\xi,r)\cap\Om=B(\xi,r)\cap\{(x',x_{N})\mid x_{N}>b(x')\}$. 
There exists $a_0>0$ such that 
for any $a\in(0,a_0)$, there exists a bounded open neighborhood $W_a$ 
of $\ol{B}(\xi,r)\cap\cO$ such that 
$W_a\subset -a e_{N}+\Om:=\{-a e_{N}
+x\mid x\in\Om\}$, 
where $e_{N}:=(0,\ldots,0,1)\in\R^{N}$. 

Since  
$(u^{\del}-g)(\xi,\tau)\le 0$ in view of Proposition \ref{prop:classic} 
and $u^{\del}(\xi,\tau)-v(\xi,\tau) >0$, 
we have $v(\xi,\tau)-g(\xi)<0$.

For any $a\in(0,a_0)$ and $(x,t)\in \ol{W}_a\times[0,T]$, 
set 
$u_{a}^{\del}(x,t):=u^{\del}(x+a e_{N},t)$. 
Then, it is easily seen that $u_{a}^{\del}$ satisfies 
\[(u_{a}^{\del})_t(x,t)+H(x+a e_{N},Du_{a}^{\del}(x,t))\le 0 \quad\textrm{in} \ W_a\times(a_\del,T+1)\]
in the viscosity sense.

Let $\ep>0$ and define the function 
$\Psi:\ol{W}_a\times(\ol{B}(\xi,r)\cap\cO)\times [0,T]
\to\R$ by 
\begin{align*}
\Psi(x,y,t,s)
:=&\, 
u_{a}^{\del}(x,t)-v(y,t)-\eta t \\
&-\frac{1}{2\ep^{2}}|x-y|^2-|y-\xi|^2-(t-\tau)^2.
\end{align*}

Set 
$V:=\ol{W}_a\times(\ol{B}(\xi,r)\cap\cO)\times[0,T]$ and 
let $(\ol{x},\ol{y},\ol{t})\in V$ be a maximum point of $\Psi$ 
on $V$. 
By the compactness of $V$ 
we may assume that 
$\ol{x},\ol{y}\to x_{a}$ and 
$\ol{t}\to t_{a}$
as $\ep\to0$ 
by taking a subsequence if necessary. 
Taking the limit as $\ep\to0$ in the inequality 
$\Psi(\ol{x},\ol{y},\ol{t})\ge \Psi(\xi,\xi,\tau)$
yields 
\begin{align*}
|x_{a}-\xi|^2+|t_{a}-\tau|^2&\le\, 
(u_{a}^{\del}-v)(x_{a},t_{a})-\eta t_{a}
-\{(u_{a}^{\del}-v)(\xi,\tau)-\eta \tau\}\\
{}
&\le\, 
(u^{\del}-v)(x_{a},t_{a})-\eta t_{a}
-\{(u^{\del}-v)(\xi,\tau)-\eta \tau\}\\
{}&\hspace*{13pt}+2m_{u}(a)\\
&\le\, 
2m_{u}(a), 
\end{align*}
where $m_{u}$ be a modulus of continuity of $u^{\del}$ 
on $\ol{Q}_{T}$. 
We notice that we use the continuity of $u$ here.

Fix a small $a\in(0,a_0)$ so that 
$v(x_{a},t_{a})-g(x_{a})<0$ and moreover 
we may assume that 
$v(\ol{y},\ol{t})-g(\ol{y})<0$ for $\ol{y}\in\bO$ if 
$\ep$ is sufficiently small.

By the viscosity property of $u_{a}^{\del}$ 
and $v$ and using classical arguments, there exist 
$b_{1}, b_{2} \in \R$ such that
\begin{gather}
b_1 
+H(\ol{x}+a e_{N},\frac{\ol{x}-\ol{y}}{\ep^{2}})\le 0, \label{comp4-sub}\\ 
b_2 
+H(\ol{y},\frac{\ol{x}-\ol{y}}{\ep^{2}}-2(\ol{y}-\xi))
\ge 0 \label{comp4-sup}
\end{gather}
with $d_1-d_2=\eta+2(\ol{t}-\tau)$. 
In view of the Lipschitz continuity of $u_{a}^{\del}$ 
on $\ol{W}_{a}\times[0,T]$, 
we have $(\ol{x}-\ol{y})/\ep^{2}$ is bounded 
uniformly $\ep>0$ 
and therefore, by sending $\ep\to0$  
we may assume 
$(\ol{x}-\ol{y})/\ep^{2}\to p_{\del,a}\in B(0,C_{\del,a})$ 
for some $C_{\del,a}>0$. 
Noting that $H$ is uniformly continuous on 
$\cO\times B(0,C_{\del,a}+1)$, 
we see that there exists a modulus $m_{\del}$ such that 
\begin{equation}\label{comp4-ineq}
H(x+a e_{N}, p)\ge H(x,p)-m_{\del}(a)\quad\textrm{for any} \ 
(x,p)\in \cO\times B(0,C_\del+1). 
\end{equation}

Combining \eqref{comp4-sub} and \eqref{comp4-sup} 
with \eqref{comp4-ineq}, 
if $a>0$ is small enough, we get 
\[
m_{\del}(a)\ge  
\eta+2(\ol{t}-\tau)
+H(\ol{x},\frac{\ol{x}-\ol{y}}{\ep^{2}})
-H(\ol{y},\frac{\ol{x}-\ol{y}}{\ep^{2}}-2(\ol{y}-\xi)).
\]
After taking the limit as $\ep\to0$, 
send $a\to0$ and then we get $\eta \leq 0$, 
which is a contradiction. 
We have thus completed the proof. 
\end{proof}

\subsection{Existence of Solutions of (IB)}

\begin{proof}[Proof of Theorem {\rm \ref{thm:existence}}]
We present the proof of the existence of Lipschitz continuous solutions of 
(IB).

Following similar arguments as in the proof of Theorem {\rm \ref{thm:additive}}, it is easy to prove that, for $C>0$ large enough $-Ct + u_0(x)$ and $Ct + u_0(x)$ are, respectively, viscosity subsolution and 
a supersolution of (CN) or (SC) or (CD) with, of course, different constant $C$ in each case.

By Perron's method (see \cite{I0}) and Theorem \ref{thm:comparison} 
we obtain continuous solutions of (IB) for (CN), (SC)  and (CD) that we denote by $u_{1}, u_{2}$ and $u_{3}$, respectively. 
As a consequence of Perron's method, we have 
$$ -Ct +u_0(x) \le u_i (x,t) \le Ct +u_0(x) \ \textrm{on} \ \cQ \; ,$$
for $i=1,2,3$. 

To conclude, we use a standard argument : comparing the solutions $u_i(x,t)$ and $u_i(x,t+h)$ for some $h>0$ and using the above property on the $u_i$, we have
$$ \|u_i(\cdot,\cdot+h)-u_i(\cdot,\cdot)\|_\infty 
\leq \|u_i(\cdot,h)-u_i(\cdot,0)\|_\infty \leq Ch\; .$$ 
As a consequence we have $\|(u_i)_t\|_\infty \leq C$ and, by using the equation together with (A1), we obtain that $Du$ is also bounded. This completes the proof. 
\end{proof}

We finally remark that 
we can deduce the uniform continuity of 
solutions of (IB) under the assumption $u_{0}\in C(\cO)$.  
We may choose a sequence 
$\{u_{0}^{k}\}_{k\in\N}\subset \W(\Om)\cap C(\cO)$ 
so that $\|u_{0}^{k}-u_{0}\|_{\Li(\Om)} \le 1/k$ 
for all $n\in\N$. 
Let $u_{k}$ be a solution of (IB) with $u_{0}=u_{0}^{k}$ and 
by the above argument we see $u_{k}\in\UC(\cQ)$ 
for all $k\in\N$.  
The maximum principle for (IB) implies that 
$u_{k}$ uniformly converges to $u$ on $\cQ$. 
Thus we obtain $u\in\UC(\cQ)$.

\medskip
\noindent
\textbf{Acknowledgements. }
Owing to a recent joint work \cite{BIM} 
with Hitoshi Ishii,  
the authors could refine the proof of Lemma \ref{lem:mu} 
in the case of Neumann problem 
on an early version of this paper.  
Moreover he gave the second author helpful suggestions 
on Sections 6.3 and Proposition 7.2. 
The authors are grateful to him. 
This work was partially done 
while the second author visited the Laboratoire de 
Math\'ematiques et Physique Th\'eorique, Universit\'e 
de Tours and Mathematics Department, 
University of California, Berkeley. 
He is grateful for their hospitality.


\bibliographystyle{amsplain}

\begin{thebibliography}{1}



\bibitem{B2}
G. Barles, 
\emph{Solutions de viscosit\'e des \'equations de Hamilton-Jacobi}, 
Math\'ematiques \& Applications (Berlin), 17, Springer-Verlag, Paris, 1994. 


\bibitem{BIM} 
G. Barles, H. Ishii and H. Mitake, in preparation. 


\bibitem{BR}
G. Barles and J.-M. Roquejoffre, 
\emph{Ergodic type problems and large time behaviour of unbounded solutions of Hamilton-Jacobi equations}, 
Comm. Partial Differential Equations {\bf 31} (2006), no. 7-9, 1209--1225.

\bibitem{BS}
G. Barles and P. E. Souganidis, \emph{On the large time behavior of solutions of Hamilton-Jacobi equations}, 
SIAM J. Math. Anal. {\bf 31} (2000), no. 4, 925--939.

\bibitem{BJ}
E. N. Barron and R. Jensen, 
\emph{Semicontinuous viscosity solutions for Hamilton-Jacobi equations with convex Hamiltonians}, 
Comm. Partial Differential Equations {\bf 15} (1990), no. 12, 1713--1742.


\bibitem{BeR}
P. Bernard, J.-M. Roquejoffre, 
\emph{Convergence to time-periodic solutions in time-periodic Hamilton-Jacobi equations on the circle}, 
Comm. Partial Differential Equations {\bf 29} (2004), no. 3-4, 457--469.


\bibitem{CL}
I. Capuzzo-Dolcetta and P.-L. Lions, \emph{Hamilton-Jacobi equations with state constraints}, Trans. Amer. Math. Soc. 318 (1990), no. 2, 643--683.


\bibitem{CIL}
M. G. Crandall, H. Ishii and P.-L. Lions, 
\emph{User's guide to viscosity solutions of second order partial differential equations}, 
Bull. Amer. Math. Soc. (N.S.) 27 (1992), no. 1, 1--67. 


\bibitem{DS}
A. Davini and A. Siconolfi, 
\emph{A generalized dynamical approach to the large time behavior of solutions of Hamilton-Jacobi equations}, 
SIAM J. Math. Anal. {\bf 38} (2006), no. 2, 478--502 


\bibitem{F2}
A. Fathi, 
\emph{Sur la convergence du semi-groupe de Lax-Oleinik}, 
C. R. Acad. Sci. Paris S\'er. I Math. {\bf 327} (1998), no. 3, 267--270.



\bibitem{FS}
A. Fathi and A. Siconolfi, 
\emph{PDE aspects of Aubry-Mather theory for quasiconvex Hamiltonians}, 
Calc. Var. Partial Differential Equations {\bf 22} (2005), no. 2, 185--228.



\bibitem{FL}
Y. Fujita and P. Loreti,
\emph{Long-time behavior of solutions to Hamilton-Jacobi equations with quadratic gradient term}, 
NoDEA Nonlinear Differential Equations Appl. {\bf 16} (2009),  no. 6, 771--791. 

\bibitem{FU}
Y. Fujita and K. Uchiyama, 
\emph{Asymptotic solutions with slow convergence rate 
of Hamilton-Jacobi equations in Euclidean n space}, 
Differential Integral Equations  {\bf 20}  (2007),  no. 10, 1185--1200. 



\bibitem{GLM1}
Y. Giga, Q. Liu and H. Mitake, 
\emph{Singular Neumann problems and large-time behavior of
solutions of non-coercive Hamitonian-Jacobi equations}, 
submitted. 


\bibitem{GLM2}
Y. Giga, Q. Liu and H. Mitake, 
\emph{Large-time behavior of one-dimensional Dirichlet problems 
of Hamilton-Jacobi equations with non-coercive Hamiltonians}, 
preprint. 


\bibitem{II1}
N. Ichihara and H. Ishii, 
\emph{Asymptotic solutions of Hamilton-Jacobi equations with semi-periodic Hamiltonians}, 
Comm. Partial Differential Equations  {\bf 33}  (2008),  no. 4-6, 784--807.

\bibitem{II2}
N. Ichihara and H. Ishii, 
\emph{The large-time behavior of solutions of Hamilton-Jacobi equations on the real line}, 
Methods Appl. Anal.  {\bf 15}  (2008),  no. 2, 223--242. 



\bibitem{II3}
N. Ichihara and H. Ishii, 
\emph{ Long-time behavior of solutions of Hamilton-Jacobi equations with convex and coercive Hamiltonians}, 
Arch. Ration. Mech. Anal.  {\bf 194}  (2009),  no. 2, 383--419.

\bibitem{I0}
H. Ishii, 
\emph{Perron's method for Hamilton-Jacobi equations}, 
Duke Math. J. 55 (1987), no. 2, 369--384. 

\bibitem{I}
H. Ishii, 
\emph{Asymptotic solutions for large time of Hamilton-Jacobi equations in Euclidean n space}, 
Ann. Inst. H. Poincar\'e Anal. Non Lin\'eaire, {\bf 25} (2008), no 2, 231--266. 
\bibitem{I2}
H. Ishii, 
Weak KAM aspects of convex Hamilton-Jacobi equations with Neumann type boundary conditions, submitted. 
Available at http://www.edu.waseda.ac.jp/\~{}ishii/. 

\bibitem{I3} 
H. Ishii, Long-time asymptotic solutions of convex Hamilton-Jacobi 
equations with Neumann type bouundary conditions, to appear in  Calc. Var. Partial Differential Equations. 


\bibitem{IM}
H. Ishii and H. Mitake, 
\emph{Representation formulas for solutions of Hamilton-Jacobi equations with convex Hamiltonians}, 
Indiana Univ. Math. J., {\bf 56} (2007), no. 5, 2159--2184. 


\bibitem{K}
S. N. Kru\v zkov, \emph{Generalized solutions of nonlinear equations of the first order with several independent variables. II}, (Russian) Mat. Sb. (N.S.) {\bf 72} (114) 1967 108--134. 




\bibitem{L2}
P.-L. Lions, 
\emph{Neumann type boundary conditions for Hamilton-Jacobi equations}, 
Duke Math. J. {\bf 52} (1985), no. 4, 793--820. 


\bibitem{LS}
P.-L. Lions, A.-S. Sznitman, 
\emph{Stochastic differential equations with reflecting boundary conditions}, 
Comm. Pure Appl. Math. 37 (1984), no. 4, 511--537. 

\bibitem{LPV}  
P.-L. Lions, G. Papanicolaou, S. R. S. Varadhan, {
Homogenization of Hamilton-Jacobi Equations}, unpublished work.

\bibitem{M1}
H. Mitake, 
\emph{Asymptotic solutions of Hamilton-Jacobi equations with state constraints}, Appl. Math. Optim.  {\bf 58}  (2008),  no. 3, 393--410. 

\bibitem{M2}
H. Mitake, 
\emph{The large-time behavior of solutions of the Cauchy-Dirichlet 
problem for Hamilton-Jacobi equations}, 
NoDEA Nonlinear Differential Equations App. {\bf 15} (2008), no. 3, 347--362. 

\bibitem{M3}
H. Mitake, 
\emph{Large time behavior of solutions of Hamilton-Jacobi equations with periodic boundary data}, 
Nonlinear Anal.  {\bf 71}  (2009),  no. 11, 5392--5405. 


\bibitem{NR}
G. Namah and J.-M. Roquejoffre, 
\emph{Remarks on the long time behaviour of the solutions of Hamilton-Jacobi equations}, 
Comm. Partial Differential Equations {\bf 24} (1999), no. 5-6, 883--893. 

\bibitem{R}
J.-M. Roquejoffre, 
\emph{Convergence to steady states or periodic solutions in a class of Hamilton-Jacobi equations}, 
J. Math. Pures Appl. (9) {\bf 80} (2001), no. 1, 85--104.

\end{thebibliography}
\providecommand{\bysame}{\leavevmode\hbox to3em{\hrulefill}\thinspace}
\providecommand{\MR}{\relax\ifhmode\unskip\space\fi MR }
\providecommand{\MRhref}[2]{%
  \href{http://www.ams.org/mathscinet-getitem?mr=#1}{#2}
}
\providecommand{\href}[2]{#2}

\end{document}